\renewcommand{\epsilon}{\varepsilon}
\newtheorem{theorem}{Theorem}[section]
\newtheorem{corollary}{Corollary}[theorem]
\newtheorem{lemma}[theorem]{Lemma}
\title{\huge Multi-level Parareal algorithm with Averaging for Oscillatory Problems}
\author[1]{Juliane Rosemeier}
\author[2]{Terry Haut}
\author[3]{Beth Wingate}
\affil[1]{University of Exeter}
\affil[2]{Lawrence Livermore National Laboratory}
\affil[3]{University of Exeter}
\date{\today}
\begin{document}

\maketitle

\renewcommand\abstractname{Abstract} 
\begin{abstract}
    The present study is an extension of the work done in  \emph{Parareal convergence for oscillatory pdes with finite time-scale separation} (2019), A. G. Peddle, T. Haut, and B. Wingate, \cite{Peddle_Haut_Wingate_2019}, and  \emph{An asymptotic parallel-in-time method for highly oscillatory pdes} (2014),  T. Haut and B. Wingate, \cite{Haut_Wingate_2014}, where a two-level Parareal method with averaging is examined. The method proposed in this paper is a multi-level Parareal method with arbitrarily many levels, which is not restricted to the two-level case. We give an asymptotic error estimate which reduces to the two-level estimate for the case when only two levels are considered. Introducing more than two levels has important consequences for the averaging procedure, as we choose separate averaging windows for each of the different levels, which is an additional new feature of the present study. The different averaging windows make the proposed method especially appropriate for multi-scale problems, because we can introduce a level for each intrinsic scale of the problem and adapt the averaging procedure such that we reproduce the behavior of the model on the particular scale resolved by the level. The computational complexity of the new method is investigated and the efficiency is studied on several examples.
\end{abstract}


\section{Introduction}
\label{sec:introduction}


 With complex changes in modern computer architectures comes new challenges for simulation and modeling to develop algorithms that can take advantage of their increased concurrency \cite{Lawrence_etAl2018}. The classic problem considered in this paper is the solution to partial differential equations that depend on space and time. One way to increase the computational performance of these problems is to increase the number of grid points in space, but often the time-step must be reduced to satisfy stability and/or accuracy constraints, like the CFL condition.  In this case, the serial time-stepping leads to longer model runs. This is one motivation for introducing parallelization to the time domain and is the central topic of this paper.

 In the present study, the focus is on the Parareal method, a time-parallel method first proposed in \cite{LionsEtAl2001}. Since its publication a lot of research has been done on the method with the aim to exploit the advantages of parallelism in time. For instance \cite{Peddle_Haut_Wingate_2019} and \cite{Haut_Wingate_2014} combine the Parareal method with averaging to solve fluid-dominated problems. Several studies treat the well-known stability issues related to problems of this type, see for instance \cite{Ruprecht_2018} or \cite{SteinerEtAl2015}. Convergence of the Parareal method is also discussed in \cite{Gander_Vandewalle_2007},
 \cite{Gander_Hairer_2008} or \cite{Bal_2005}.
Moreover, the authors of
\cite{Gander_et_al_2018} give several interpretations of the Parareal algorithm and especially show its relation to the MGRIT algorithm. 
Investigations on the error and convergence can be found for example in \cite{Southworth_2019}, \cite{Southworth_etAl_2021} or \cite{Friehoff_Southworth_2021}.There are also attempts to improve the understanding of hyperbolic or advection-dominated problems, see to \cite{De_Sterck_etAl_2020}, \cite{Hessenthaler_etAl_2020} or \cite{DeSterck_et_al_2021} just to name a few.

The problems under consideration in the present study exhibit scale separation and admit the following form
\begin{equation}
\label{eq:problem_under_consideration}
\frac{d \bf u}{dt} + \frac{1}{\varepsilon} \mathcal{L} {\bf u} =
\mathcal{N}({\bf u}) .
\end{equation}
The linear operator $\mathcal{L}$ is skew Hermitian, i.e. it has purely imaginary eigenvalues and is responsible for temporal oscillations in the solution. The parameter $\varepsilon$ is small and makes the system stiff. Especially, the system shows oscillatory stiffness. The term $\mathcal{N}$ is a quadratic non-linearity. In addition,  a diffusive term $\mathcal{D}$ can be added in equation \eqref{eq:problem_under_consideration}.

Applying the transformation
\begin{equation}
\label{eq:transormation}
   {\bf w}(t)=\exp(\frac{\mathcal{L}}{\varepsilon} t) {\bf u}(t)
\end{equation}to the above system, we can eliminate the linear term. The transformed system denoted as the  {\it modulation equation}, since it’s time evolution is more regular than equation \eqref{eq:problem_under_consideration}, admits the form
\begin{equation}
\label{eq:modulation_equation}
\frac{d\bf w}{dt}  = \exp\left(\frac{\mathcal{L}}{\varepsilon} t\right) \mathcal{N}\left(\exp\left(-\frac{\mathcal{L}}{\varepsilon} t\right) {\bf w} \right) .
 \end{equation}

There exist numerous scientific applications which have the form of equation \eqref{eq:problem_under_consideration}, including examples that occur in atmospheric and oceanic simulations, like the swinging spring \cite{Holm_Lynch_2002}, also called the elastic pendulum, or the rotating shallow water equations, see \cite{EmMa1996}.

As the operator $\mathcal{L}$ is skew Hermitian, the norm of the right-hand side of equation \eqref{eq:modulation_equation} is independent of $\epsilon$. Especially, applying the transformation \eqref{eq:transormation} to the problem \eqref{eq:problem_under_consideration} eliminated linear term and made the problem smoother. However taking higher order derivatives of $\bf w$ we see that with each order we get an additional power of $1/\epsilon$ in the derivative. 
To further mitigate the oscillatory stiffness we apply {averaging} techniques. Further mitigating the stiffness is important when numerical time-stepping schemes are applied, since the truncation error depends on higher order derivatives.


The basic idea of the averaging techniques applied in the present work is to replace an original problem which exhibits oscillatory stiffness by a problem which is less stiff. We might also say that the averaging smooths the modulation equation. 
In particular, the right-hand side of a time evolution problem, like equation \eqref{eq:modulation_equation}, is convolved using a scaled filter function $\rho$. For the scaling, an averaging window $\eta$ must be chosen appropriately. Particularly, it must mitigate the fast oscillations while leaving the coarse or mean behavior of the problem unaffected. In addition, the filter function $\rho$ must satisfy certain properties.  The ideas of temporal averaging were also investigated in related contexts such as ODEs \cite {Sanders_etal_2007}, in the context of heterogeneous multiscale methods (\cite{E_Enquist_03}, \cite{Engquist_Tsai_2005}), and in PDEs analysis \cite{EmMa1996}. In the present work averaging techniques shall be applied to modulation equations to construct {coarse propagators} for a Multi-level Parareal method.

In order to explain how the averaging mitigates the oscillatory stiffness due to fast oscillations, let us assume that a slow function is superimposed by a fast periodic function with zero mean. Integrating the fast function over an interval of length $\eta$ where $\eta$ is as large as a few times the period of the fast periodic function, the positive and negative contributions cancel each other. However, the integrand is weighted by a scaled kernel function $\rho$ with compact support, which decays fast close to the boundary of the compact support. Moreover, the knowledge of the exact period is not assumed in the method. Therefore, in general we do not observe an exact cancellation of the oscillations but rather a mitigation, see for example Figure 5. The technical details can be found in \cite{Engquist_Tsai_2005} in Lemma 2.2. Moreover, a mathematical formulation of the averaged equations is given in \cref{subsec:Two-level_Parareal}.

The Parareal method was first presented in \cite{LionsEtAl2001}. It is a time-parallel method with two levels. For the exposition in this study we enumerate the levels, i.e. for the two-level method we have level 1 and level 0. On level 1 a coarse time grid is introduced and on level 0 we have several fine time grids. On the coarse time grid of level 1 a coarse propagator, for instance a Runge-Kutta method, is applied to compute a numerical approximation to a differential equation. This numerical approximation is then improved iteratively using solutions computed with the fine propagator on level 0 in parallel. We can define the Multi-level Parareal algorithm recursively in the levels until we reach the two-level case.The idea of the Multi-level Parareal algorithm with $L$ levels is that the coarse propagator provides a solution on the coarsest level and the fine propagator is a  Multi-level Parareal algorithm with $L-1$ levels. 
More detailed descriptions of the two-level and multi-level methods can be found in \cref{subsec:Two-level_Parareal} and \cref{subsec:Two-level_Parareal_averaging}.

The strategy for the application of the Multi-level Parareal algorithm with averaging to a multi-scale problem can be formulated as follows: Suppose we are given a problem with several time scales, for example a modulation equation. For each scale we introduce a level, on which an averaged equation is solved. (Only on the finest level, level 0, we solve the full, unaveraged system.) We average such that we keep the features of the original system on that scale, but the finer components of the equation vanish through the averaging process. This step requires a convenient choice of the averaging window $\eta_l$, which is level-dependent in the multi-level case. The averaging procedure makes the system more well-behaved for the numerical time-stepping method. Thus, it gives us a good coarse propagator for that level. Combining this with the parallelization of the Parareal method shall result in efficient numerical algorithms.

The averaging process requires the formulation of  analytical equations. In particular, for each level we formulate an averaged, analytical equation which shows the same behavior as the original system, i.e. the modulation equation, on the coarser scales up to the scale that corresponds to the level considered, but whose behavior on the finer scales is different. Especially, the features on the finer scales are averaged in the averaged equation. The step of formulating analytical equations is skipped when other strategies to deal with the fast components, for example applying implicit methods, are used. However, the analytical equations provide descriptions of physical phenomena and are therefore a link to the theory or modeling of the physical application considered.

In this place, the importance of the modulation equation for the method shall be explained.
First, using the modulation equation is the first step which makes the equations smoother, but even after applying the averaging we can recover the oscillations in the solution, at least as they correspond to the linear term, by applying the inverse transformation. Second, the information about the phase is not in the initial condition anymore, but in the exponential explicitly in $t$ which is beneficial for numerical computations and shall be explained here. The coarse propagator of the Parareal method provides a solution with damped oscillations due to the application of an averaging procedure, see for example \cref{fig:secondComp_InitGuess}. However, we want to compute a solution to the unaveraged modulation equation with the full information about the oscillations. This is accomplished as the fine propagator computes a solution to the unaveraged modulation equation, see for instance \cref{fig:secondComp_ThirdIt}. When the fine propagator is applied the initial guesses, which are the initial values for the fine propagator, come from the coarse propagator and do not contain any of the variations from the fast oscillations. Still the fine propagator  computes the phase correctly. This indicates that the phase information is in the right-hand side in \eqref{eq:modulation_equation}, especially in the exponential explicitely in $t$ and not in the initial conditions. Further explanation can be found the numerical examples in \cref{subsec:System_with_three_scales}.

A major achievement of the present work is an asymptotic convergence proof. The error estimate for a two-level Parareal method is extended to the case of multiple levels with and without averaging. This is a generalization of the classical proof found in \cite{Gander_Hairer_2008} and the convergence proof for the APinT method found in \cite{Peddle_Haut_Wingate_2019}. The new proof has two main steps: In the mentioned literature error estimates for the two-level case can be found, however the fine propagator is assumed to be the exact propagator. When we introduce multiple levels we cannot make this assumption, because we want to know how the error contributions that emerge on the finer levels propagate through the different coarser levels and we will find that they are amplified by an amplification factor that depends on the details of the scheme. Consequently, the first step of the new proof is to refine the two-level estimate to the case where the fine propagator is not exact. The second main step is to apply an inductive argument to obtain an estimate for the multi-level case. This might be beneficial for identifying on which level the dominant error contribution emerges and therefore how the time-steps or the number of iterations on the different levels should be chosen to reach a certain error tolerance. For this idea  error estimators would be needed. This is a possible future extension of the presented work.

We want to answer the question if multi-level methods can be more efficient than two-level methods.  Let us suppose that we are given a two-level method. The aim is to design a multi-level method which is more efficient. When we do not do too many iterations with the multi-level method, the number of serial steps done with the multi-level method is less than number of serial steps done with the two-level method. However, we expect that we have to do several iterations with the multi-level method to reach the accuracy of the two-level method. This is illustrated in \cref{fig:complexity_error_multilevel_parareal}.  An example will be discussed in   \cref{subsec:RSWE}.

The paper is organized as follows: The algorithms are described in  \cref{sec:Formulation_of_the_algorithm}. Especially, the exposition in  \cref{subsec:Two-level_Parareal} is about the two-level Parareal schemes and  \cref{subsec:Two-level_Parareal_averaging} contains a description of the new multi-level schemes. Asymptotic convergence results with and without averaging can be found in  \cref{sec:Asymptotic_convergence}. In section \cref{sec:Efficency} the issue of computational complexity of the Multi-level Parareal schemes is discussed. Numerical examples can be found in the following  \cref{sec:Numerical_examples}. Finally, in  \cref{sec:Discussion_conclusion} a discussion of the results and a conclusion are given.


\section{Formulation of the algorithm}
\label{sec:Formulation_of_the_algorithm}

 The Parareal method was first formulated in \cite{LionsEtAl2001}. Versions with averaging incorporated can be found in \cite{Haut_Wingate_2014} or \cite{Peddle_Haut_Wingate_2019}. The next \cref{subsec:Two-level_Parareal} summarizes  the methods. Then, the  \cref{subsec:Two-level_Parareal_averaging} presents multi-level versions. 

\begin{figure}
    \centering
    \includegraphics{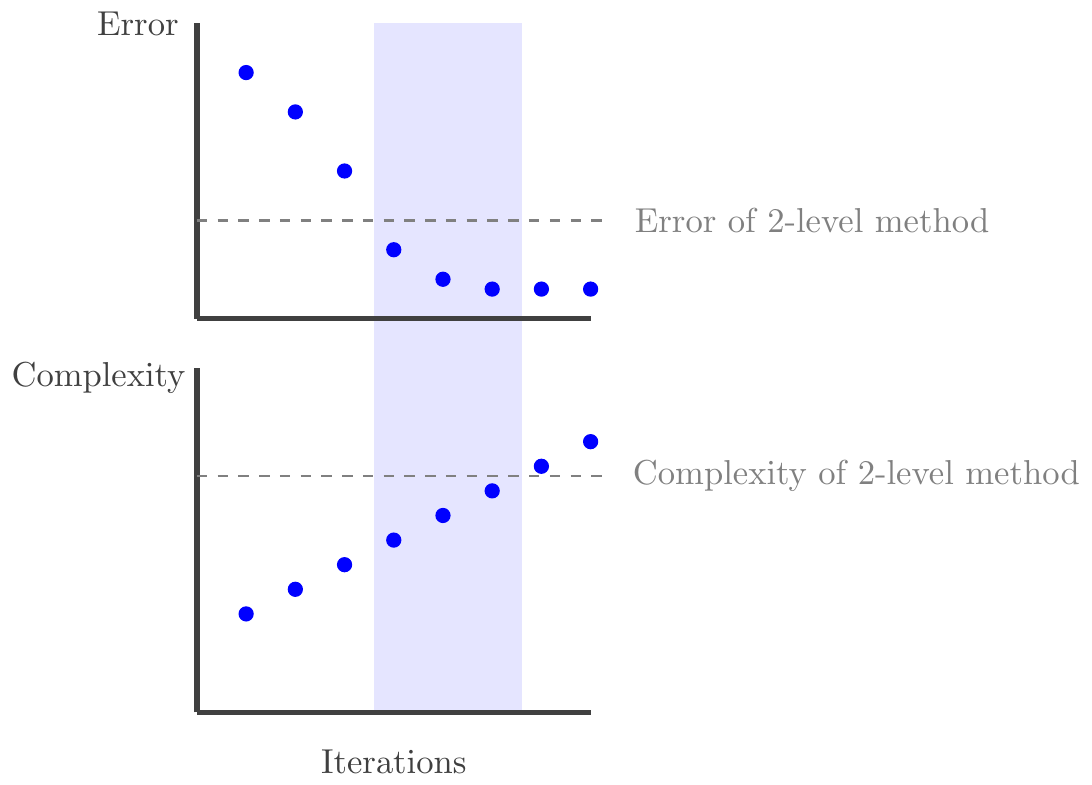}
    \caption{The dashed gray line in the upper figure illustrates the error of a two-level Parareal method. The blue dots in the upper figure show how the error of a Multi-level Parareal method decreases with an increasing number of iterations. The dashed gray line in the lower figure depicts the computational complexity of the two-level Parareal method. The blue dots in the lower figure illustrate how the computational complexity of the Multi-level Parareal method increases with an increasing number of iterations. The shaded blue area shows how many iterations can be done when the multi-level method shall be both more exact and less computationally complex than the two-level method.}
    \label{fig:complexity_error_multilevel_parareal}
\end{figure}

\subsection{Two-level Parareal and Two-level Parareal with averaging}
\label{subsec:Two-level_Parareal}

The two-level Parareal method has two levels, denoted as level 1 and level 0. Level 1 is the coarse level where we do $N$ time-steps on a coarse grid, which has $N+1$ grid points. Two neighboring grid points of the coarse grid form an interval. In total we have $N$ such small intervals and on each small interval we introduce a fine grid. These are the fine grids of level 0.

The Parareal method is a parallel-in-time method which has two basic solvers, a coarse propagator denoted as $G^1$ and a fine propagator denoted as $P^0$. The upper indices refer to the levels on which the propagators are applied. The coarse and the fine propagators can be Runge-Kutta methods, but other choices are possible too. First, the coarse propagator is applied on level 1 to compute initial guesses $U^0_n$, where the index $n$ counts the time-steps on level 1. The initial guesses are computed in serial and shall be improved iteratively. The values provided through the initial guess at the end of a coarse step are passed to the fine grids on level 0 as initial values. Then the fine propagator $P_0$ is applied in parallel. The results of the fine propagator are passed back to the coarse level 1. Applying the coarse propagator again, a Parareal iteration step can be computed 
\begin{equation}
    U_{n+1}^{1} =G^1(U_{n}^{1}) + P^0 (U_{n}^{0}) -G^1 (U_{n}^{0}) .
\end{equation}
The upper index of the numerical solutions counts the iterations. Once the first iteration is computed, it can be used as a new initial guess and the next iteration can be done.

This procedure is modified in the APinT method where averaging is incorporated, see \cite{Haut_Wingate_2014} and \cite{Peddle_Haut_Wingate_2019}. Here the coarse propagator, denoted as $\bar G^1$, provides a numerical solution to an averaged problem and not the original system \eqref{eq:modulation_equation}. Solving an averaged problem has the advantage that the fast oscillations, which are still in the modulation equation, are further mitigated, provided the averaging window $\eta$ is chosen appropriately. Thus, the oscillatory stiffness is mitigated and taking large time-steps is possible when the numerical method of the coarse propagator is applied. This can be beneficial for the efficiency of the algorithm. We can formulate a Parareal iteration step with averaging incorporated as follows
\begin{equation}
\label{eq:parareal_iteration_with_averaging}
    U_{n+1}^{k+1} =\bar G^1 (U_{n}^{k+1}) + P^0 (U_{n}^{k}) -\bar G^1(U_{n}^{k}) ,
\end{equation}
where $\bar G^1$ denotes the coarse propagator, which provides a numerical solution to the averaged problem on the coarse level, level 1.

In the averaged equation the right-hand side of \eqref{eq:modulation_equation} is replaced. In particular, when the coarse propagator is applied, an approximation to the following equation is computed
\begin{equation}
\label{eq:averaged_problem}
    \frac{d \bf \bar w}{dt} = \frac{1}{\eta} \int_{-\eta/2}^{\eta/2} \rho \left(\frac{s}{\eta} \right) \exp\left(\frac{L}{\epsilon} (s+t) \right) \mathcal{N} \left(\exp\left(-\frac{L}{\epsilon} (s+t) \right) {\bf \bar w}(t)\right) ds,
\end{equation}
where $\eta$ is the averaging window and $\rho$ is the kernel function. 
We use
\begin{equation}
    \rho (s) = \frac{1}{\rho_0} \exp \left({\frac{1}{(s-1/2)(s+1/2)} } \right),
\end{equation}
where $ \rho_0 $ normalizes the function.
In the right-hand side of \eqref{eq:averaged_problem} we write ${\bf \bar w}(t)$ to emphasize that the equation depends on $t$ only and not on $s$.
The oscillations in the equation with period $\le \eta$ are averaged. 
The exponential oscillates fast in $s$, whereas the non-linearity is slowly varying. The scaled filter function is slowly varying too, provided the averaging window $\eta$ is chosen large enough. In that case we integrate the oscillations which come from the exponential over a few periods. The length of the integration interval is not necessarily an integer multiple of the period and
damped oscillations remain after the integration. 
Averaged equations have been studied earlier, see for instance \cite{Sanders_etal_2007} or in the context of heterogeneous multi-scale methods \cite{E_Enquist_03} or \cite{Engquist_Tsai_2005}.


The algorithm with averaging is particularly promising for multi-scale problems. For each intrinsic scale of the problem we can introduce a level and  resolve the properties of the system which are the specific for that scale, i.e. every scale is assigned a level and the scale specific properties are resolved on the level assigned. The key besides the correct choice of the time-steps is to choose the averaging windows for the different levels such that the behavior on the finer scales is averaged but the  properties of the model on the scale corresponding to the level considered are still present.

\subsection{Multi-level Parareal and Multi-level Parareal with averaging}
\label{subsec:Two-level_Parareal_averaging}

Here we state again the idea behind the Multi-level Parareal algorithm: The Multi-level Parareal algorithm uses a recursion in the levels. Especially, the Multi-level Parareal algorithm with $L$ levels is a two-level Parareal algorithm where we compute a coarse solution with the coarse propagator on the coarsest level and the fine propagator is a Multi-level Parareal algorithm with $L-1$ levels applied on the finer levels. Then applying the Multi-level Parareal algorithm can be iterated until we reach the two-level case, where the two-level Parareal algorithm, described in the previous subsection, is applied. The multi-level algorithm with averaging included is illustrated in  \cref{fig:Multi_Level_Parareal}. However, we note here that it can be applied with and without averaging. When the method without averaging is applied, the coarse propagator solves the original system, for instance the modulation equation, and does not provide a solution to an averaged equation.

\begin{figure}[htbp]
  \centering
  \label{fig:a}
  \includegraphics[scale=0.85]{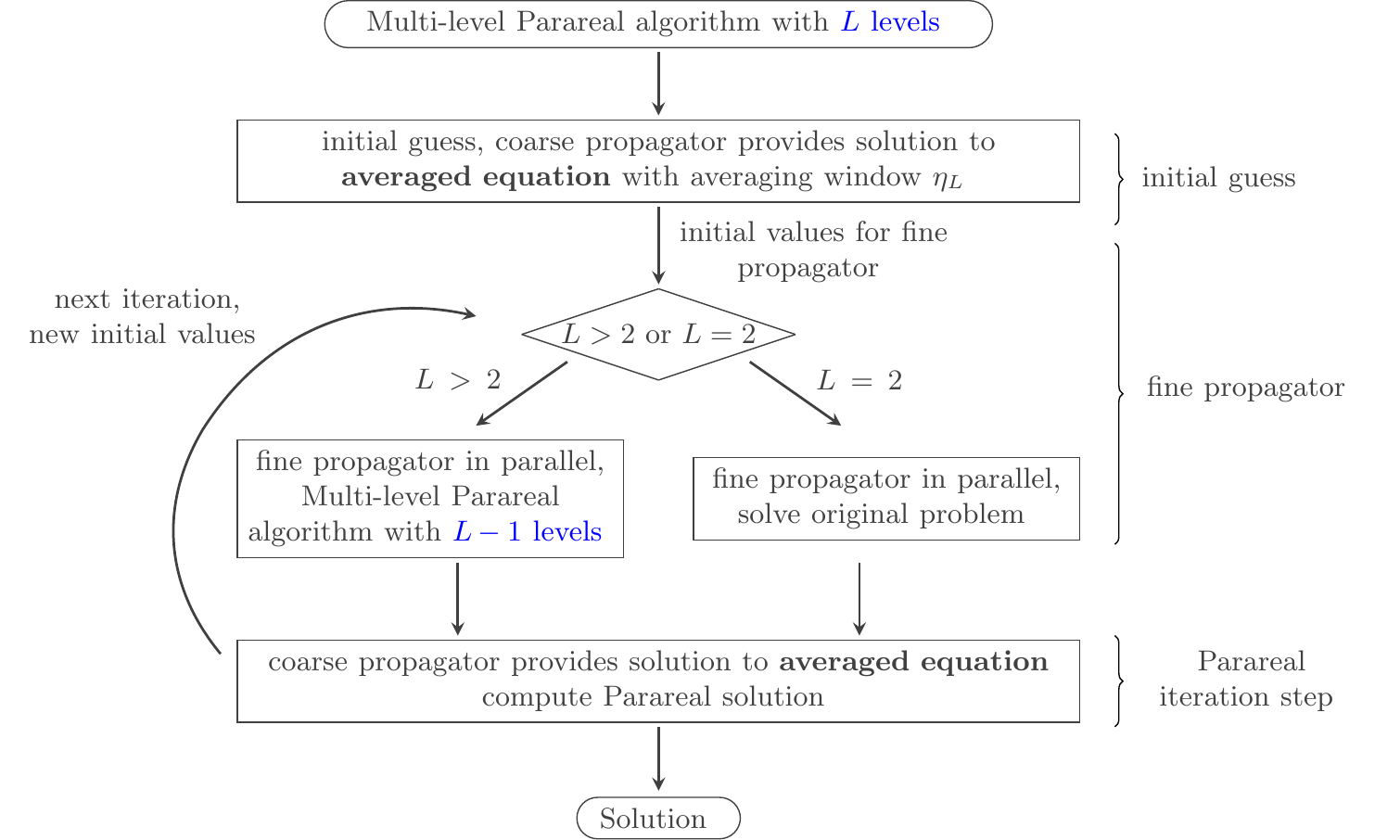}
  \caption{Multi-level Parareal algorithm with averaging}
  \label{fig:Multi_Level_Parareal}
\end{figure}

\begin{figure}
    \centering
    \includegraphics{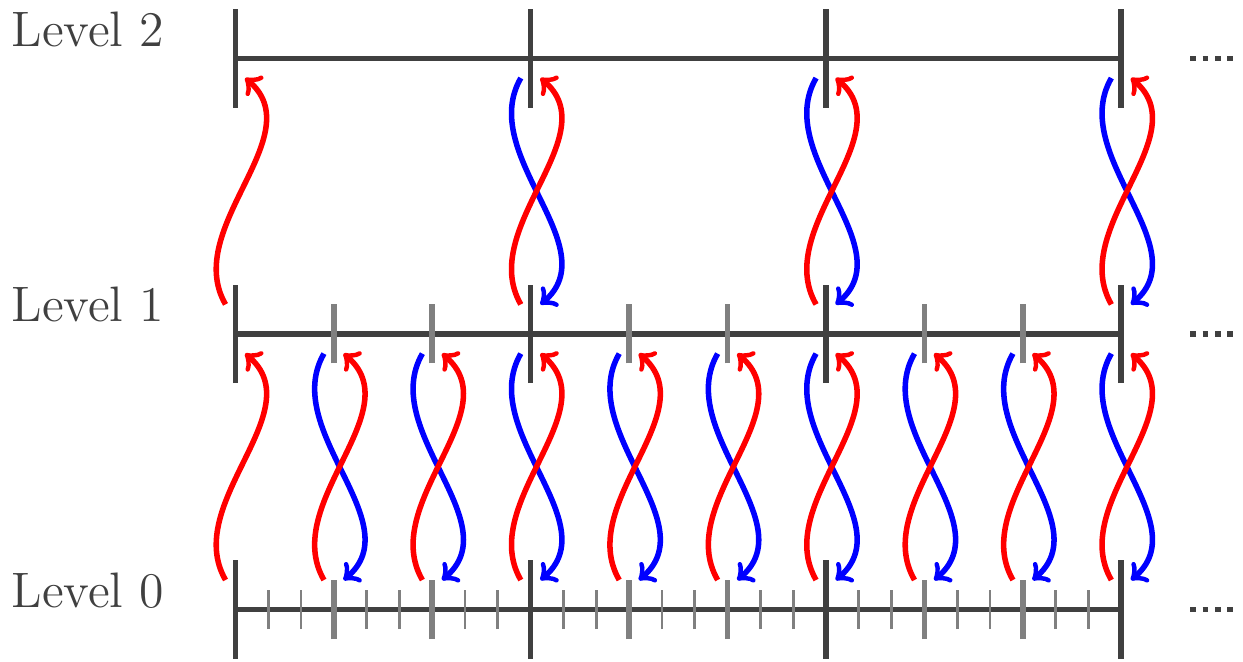}
    \caption{Three levels with coarsening factor $N=3$. Blue arrows indicate how coarse solutions, computed on level $l$, are passed from  level $l$ to level $l-1$. On level $l-1$ the coarse solutions are used as initial values for the propagator on that level. Red arrows indicate  how fine solutions, computed with the propagator on level $l-1$, are passed to level $l$ where the Parareal iteration step is done using the fine solutions.}
    \label{fig:MultiLevelParareal_InitialValues_FineSolution}
\end{figure}

To distinguish between the different levels, we introduce the subsequent notation: When we consider a Multi-level Parareal method with $L$ levels, level $L-1$ denotes the coarsest level and level $0$ is finest level. If $l_1 > l_2$ for $l_1, l_2 \in \{ 0, \dots L-1 \}$, then level $l_1$ is the coarser and level $l_2$ the finer level. The coarse propagator applied on level $l$ is denoted by $G^l$ or in the case when an averaged equation is solved by $\bar G^l$. The fine propagator needed to compute the Parareal solution on level $l$ is denoted as $P^{l-1}$ or $\bar P^{l-1}$. In the rest of the subsection we will write $\bar G^l$ or $\bar P^{l-1}$. The case without averaging can be concluded straightforwardly.

Let us now explain one iteration of the Multi-level Parareal method with $L$ levels, assuming that the fine propagators, $\bar P^{L-2}, \dots,\bar  P^1$, are  Parareal methods with only one iteration too. On the coarsest level, level $L-1$, the coarse propagator, $\bar G^{L-1}$, provides the initial guess, which is the numerical solution to an averaged problem of the form \eqref{eq:averaged_problem}, however with an indexed averaging window $\eta_{L-1}$. This solution shall now be improved iteratively according to relation \eqref{eq:parareal_iteration_with_averaging}. This means we have to compute fine solutions. The fine solutions on level $L-2$ are computed in parallel and the coarser solution from level $L-1$ gives the initial values. In the case of $L-2 =0$ the fine propagator is a basic numerical scheme, like a Runge-Kutta method, and not a Parareal method. In the other case, $L-2 > 0$, the fine propagator is a Parareal method with $L-1$ levels. This Parareal method with $L-1$ levels again has a coarse and a fine propagator. The coarse propagator of that method is applied on the level $L-2$ and is a basic time-stepping scheme, like a Runge-Kutta method. Although denoted as coarse, the coarse propagator applied on the level $L-2$ uses finer time-steps than the coarse propagator on level $L-1$. Moreover, the coarse propagator applied on the level $L-2$ computes a numerical solution to an averaged equation again of the form \eqref{eq:averaged_problem} but this time with averaging window $\eta_{L-2}$. As level $L-2$ is a finer level with finer grids than level $L-1$, we want to resolve finer scales on level $L-2$. This means that we resolve some of the oscillations on level $L-2$ that were averaged on level $L-1$. Accordingly, we have $\eta_{L-2} < \eta_{L-1}$. Consequently, oscillations with period $>O(\eta_{L-2})$ are resolved. We continue the recursion in the levels until we reach level $0$. Only on level 0 the unaveraged equation \eqref{eq:modulation_equation} is solved. Once a fine solution is computed, we can apply a Parareal iteration step according to equation \eqref{eq:parareal_iteration_with_averaging}. Especially, level 0 provides the fine solutions for level 1, needed to apply equation \eqref{eq:parareal_iteration_with_averaging} to compute a Parareal solution on level 1. Then, level 1 gives the fine solutions for level 2 and so on. We can continue this procedure until we reach the coarsest level, level $L-1$. \cref{fig:MultiLevelParareal_InitialValues_FineSolution} illustrates how the coarse solutions are passed as initial values to the propagators on the finer levels and the solutions on the finer levels are returned to the coarser levels so that the Parareal iteration step can be done.

A basic principle of the Parareal method is that an initial guess is iteratively improved. These correction iterations are done on the levels $L-1, L-2, \dots, 1$ in the multi-level case,
 and inspired by multi-grid methods we can adopt the terminology of cycles. Illustrations of examples with three levels can be found in  \cref{fig:V_Cycle} and \cref{fig:W_Cycle}.  \cref{fig:V_Cycle} shows the case where one iteration is done on the levels 1 and 2, in \cref{fig:W_Cycle} we do two iterations on the levels 1 and 2.

\begin{figure}[htbp]
  \centering
 \includegraphics{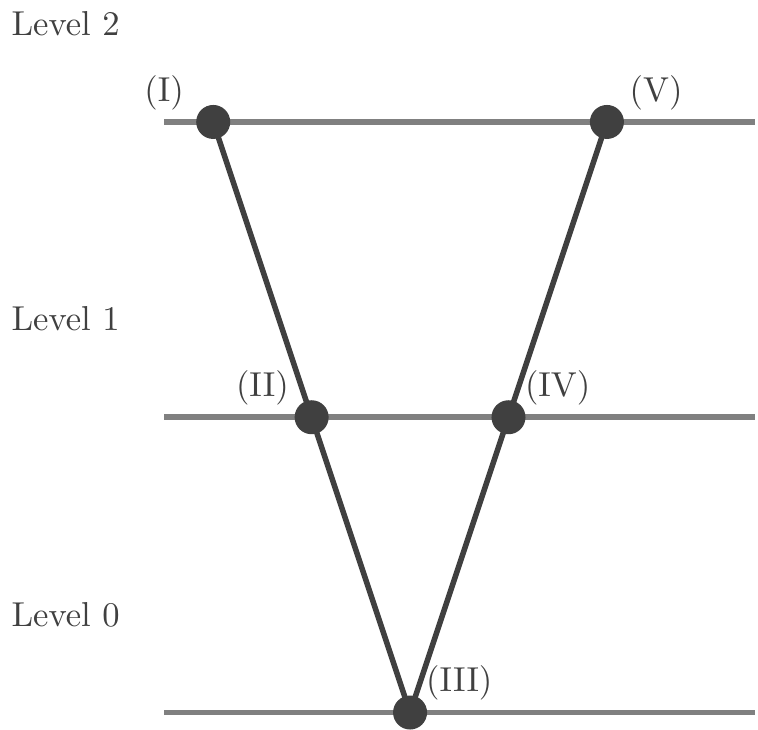}
  \label{fig:V_Cycle}
  \caption{Parareal algorithm with 3 levels, $k_2=1$ iteration on Level 2 and $k_1=1$ iteration on Level 1, (I) Compute initial guess on coarsest level, Level 2, (II) get initial values from Level 2, compute initial guess on Level 1 in parallel, (III) apply fine propagator in parallel, get initial values from Level 1, (IV) do Parareal iteration step with fine solution from Level 0, (V) do Parareal iteration step with fine solution from Level 1}
\end{figure}

\begin{figure}[htbp]
  \centering
 \includegraphics[scale=0.75]{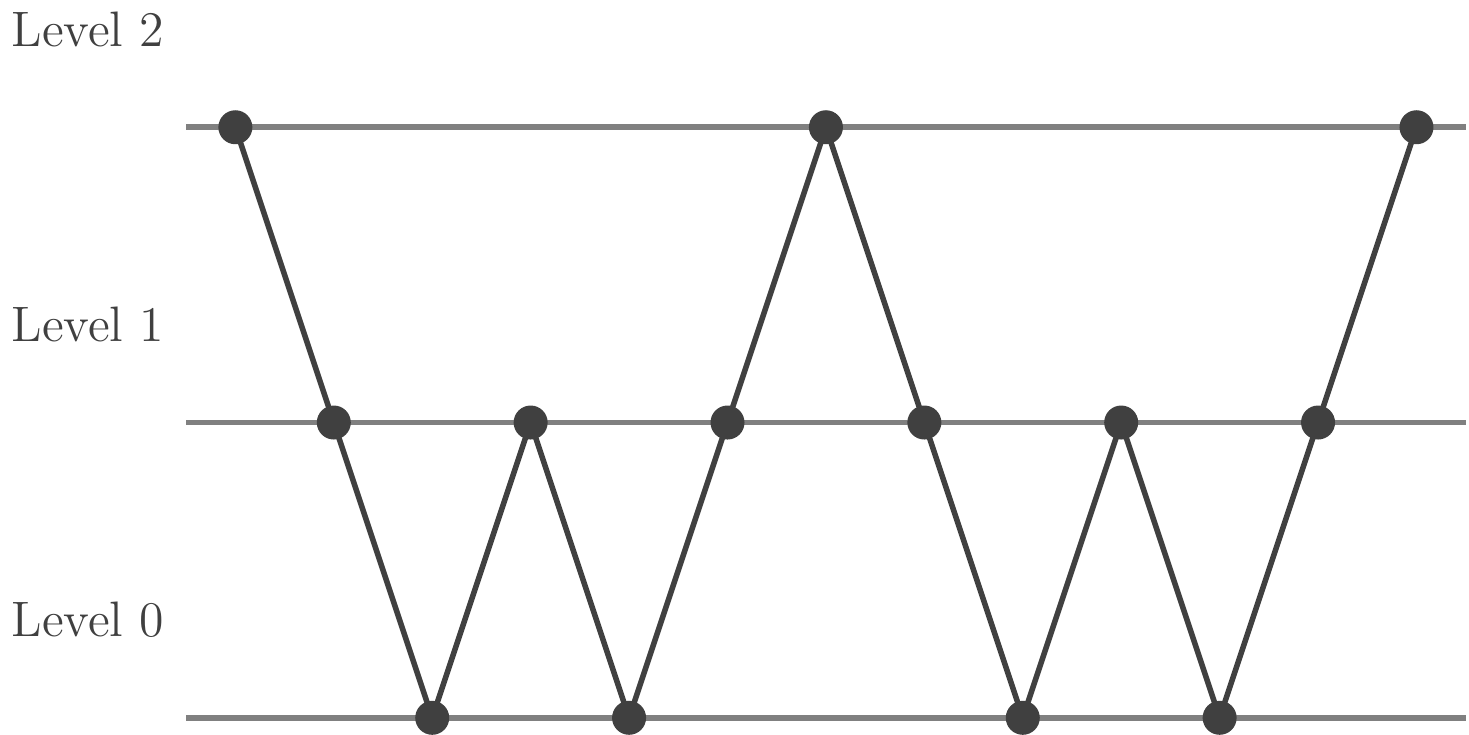}
 \label{fig:W_Cycle}
 \caption{Parareal algorithm with 3 levels with $k_2=2$ iterations on Level 2 and $k_1=2$ iterations on Level 1, Level $l$ provides the initial values for Level $l-1$, on Level $l-1$ we compute the fine solution with the fine propagator for Level $l$, where we do the Parareal iteration step }
\end{figure}


\section{Convergence results}
\label{sec:Asymptotic_convergence}


Here we state convergence results for two-level Parareal methods which are known from the literature. The stated results hold when the fine propagator computes the exact solution.

The first result can be interpreted in a dissipative context. In that case the problem, which shall be solved, has no fast oscillations, especially there is no time-step constraint due to fast oscillations.
The authors in \cite{Gander_Hairer_2008} introduce the following conditions on the coarse propagator $G^1$, which solves an unaveraged equation:
\begin{equation}
\label{eq:(7)_Gander_Hairer}
    E(x) - G^1(x) = c_{p+1}(x) \Delta T^{p+1} + c_{p+2} (x) \Delta T^{p+2} + \dots,
\end{equation}
where $E$ denotes the exact propagator of the unaveraged problem over the time horizon $\Delta T$, and
\begin{equation}
\label{eq:(8)_Gander_Hairer}
    \|G^1(x) - G^1(y)\| \le (1+C_2\Delta T) \|x - y\| .
\end{equation}
Moreover, they give the following result:
\begin{theorem}
\label{thm:standard_two_level_estimate}
Let the error of the coarse propagator $G^1$ be bounded by the truncation error $C_1 \Delta T^{p_1+1}$. Moreover, suppose $G^1$ satisfies \eqref{eq:(7)_Gander_Hairer} and the Lipschitz condition \eqref{eq:(8)_Gander_Hairer}. Then the Pararael solution satisfies
\begin{equation}
    \|y(t_n) -Y_n^k \| \le \frac{C_3C_1^kT_n^{k+1}}{(k+1)!} \exp(C_2(T_n-T_{k+1})) \left( \Delta T \right)^{p_1(k_1+1)} ,
  \end{equation}
  where  $p_1$ is accuracy order of
  coarse propagator and $k_1$ is number of iterations.
\end{theorem}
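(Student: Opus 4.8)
The plan is to follow the superlinear-convergence argument of Gander and Hairer, adapted to the present notation. Write $E$ for the exact propagator over one coarse step, and recall that, since the fine propagator is exact here, the iteration reduces to $Y_{n+1}^{k+1} = G^1(Y_n^{k+1}) + E(Y_n^k) - G^1(Y_n^k)$, while the exact solution obeys $y(t_{n+1}) = E(y(t_n))$. Setting $e_n^k := y(t_n) - Y_n^k$, subtracting, and adding and subtracting $G^1(y(t_n))$, I would group the result as
\begin{equation*}
e_{n+1}^{k+1} = \bigl[G^1(y(t_n)) - G^1(Y_n^{k+1})\bigr] + \bigl[\delta(y(t_n)) - \delta(Y_n^k)\bigr],
\end{equation*}
where $\delta(x) := E(x) - G^1(x)$ is the one-step defect. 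The first bracket is where the Lipschitz bound \eqref{eq:(8)_Gander_Hairer} enters, contributing a factor $(1+C_2\Delta T)\,\|e_n^{k+1}\|$.

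The second bracket is where the structural hypothesis \eqref{eq:(7)_Gander_Hairer} does the real work. Because $\delta(x) = c_{p_1+1}(x)\Delta T^{p_1+1} + c_{p_1+2}(x)\Delta T^{p_1+2} + \dots$ with smooth coefficient functions, $\delta$ is Lipschitz with a constant carrying the extra factor $\Delta T^{p_1+1}$; hence $\|\delta(y(t_n)) - \delta(Y_n^k)\| \le C_1\Delta T^{p_1+1}\|e_n^k\|$, absorbing the Lipschitz constants of the $c_j$ into $C_1$. This is the mechanism behind superlinear convergence: each iteration trades one copy of the iteration error $\|e_n^k\|$ for an extra $\Delta T^{p_1+1}$. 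Combining the two brackets yields the two-index recursion
\begin{equation*}
\|e_{n+1}^{k+1}\| \le (1+C_2\Delta T)\,\|e_n^{k+1}\| + C_1\Delta T^{p_1+1}\,\|e_n^k\|, \qquad e_0^{k}=0 .
\end{equation*}

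The remaining task is to solve this recursion together with the exactness property $e_n^k = 0$ for $n \le k$ (after $k$ iterations the first $k+1$ coarse nodes are reproduced exactly), which follows directly from the iteration and the fixed initial value. I would first treat the base level $k=0$: the serial coarse run $Y_{n+1}^0 = G^1(Y_n^0)$ accumulates the local truncation error against the stability factor \eqref{eq:(8)_Gander_Hairer}, giving a global bound $\|e_n^0\| \le \frac{C_1}{C_2}\Delta T^{p_1}\bigl((1+C_2\Delta T)^n - 1\bigr)$, i.e. order $p_1$ as expected. Then, unrolling in $n$ via $\|e_n^{k+1}\| \le C_1\Delta T^{p_1+1}\sum_{m=0}^{n-1}(1+C_2\Delta T)^{n-1-m}\|e_m^k\|$ and inducting on $k$, the nested summations generate binomial coefficients of the form $\binom{n}{k+1}$; the exactness property lets the counting start beyond the exact nodes. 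Bounding $\binom{n}{k+1} \le T_n^{k+1}/((k+1)!\,\Delta T^{k+1})$ converts the accumulated powers of $\Delta T$ into $T_n^{k+1}\Delta T^{p_1(k+1)}/(k+1)!$, while the collected factors $(1+C_2\Delta T)$ over the non-exact steps are estimated by $\exp(C_2(T_n - T_{k+1}))$; the constant $C_3$ collects the base-case prefactor and the defect Lipschitz constants (and any leftover power of $C_1$), here with $k=k_1$.

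The main obstacle I anticipate is the combinatorial bookkeeping of the last step: carrying the $(1+C_2\Delta T)$ weights correctly through the double unrolling so that exactly the factor $T_n^{k+1}/(k+1)!$ and the shifted exponent $\exp(C_2(T_n - T_{k+1}))$ emerge, rather than a looser estimate. A secondary subtlety is justifying the Lipschitz-with-small-constant property of $\delta$ from the expansion \eqref{eq:(7)_Gander_Hairer}: this needs the coefficient functions $c_{p_1+j}$ to be uniformly Lipschitz on the relevant region with a controlled series, which is implicit in the hypotheses. Once these two points are handled, the estimate follows from the induction on $k$ outlined above.
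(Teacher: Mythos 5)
Your proof is correct, and its first half coincides with the machinery the paper itself uses: note that the paper does not prove this theorem directly (it quotes it from Gander--Hairer), but its proof of the generalization \cref{thm:two-level_bound} uses exactly your decomposition --- the Lipschitz term $G^1(y(t_n))-G^1(Y_n^{k+1})$ plus the defect difference $(E-G^1)(y(t_n))-(E-G^1)(Y_n^k)$ --- to reach the same two-index recursion $e_n^{k+1}\le\alpha\, e_{n-1}^{k}+\beta\, e_{n-1}^{k+1}$ with $\alpha=C_1\Delta T^{p_1+1}$ and $\beta=1+C_2\Delta T$. Where you genuinely diverge is in solving that recursion: the paper introduces the generating function $\rho_k=\sum_{n\ge1}e_n^k\zeta^n$ and extracts the binomial bound from power-series manipulations (Lemma \ref{lemma:reucrsion_equation_1} together with Lemmas \ref{lemma:estimate_with_binomial_theorem1} and \ref{lemma:estimate_with_binomial_theorem2}), whereas you unroll in $n$ and induct on $k$; your induction indeed closes cleanly, since the stability factors combine as $(1+C_2\Delta T)^{n-1-m}(1+C_2\Delta T)^{m-k-1}=(1+C_2\Delta T)^{n-k-2}$ independently of $m$ and the hockey-stick identity $\sum_{m=0}^{n-1}\binom{m}{k+1}=\binom{n}{k+2}$ produces $\|e_n^k\|\le C_3C_1^k\binom{n}{k+1}\Delta T^{(p_1+1)(k+1)}(1+C_2\Delta T)^{n-k-1}$, after which both arguments finish identically via $\binom{n}{k+1}\Delta T^{k+1}\le T_n^{k+1}/(k+1)!$ and $(1+C_2\Delta T)^{n-k-1}\le\exp(C_2(T_n-T_{k+1}))$. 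Your elementary route is arguably cleaner for this homogeneous case, and it does not even need the exactness property $e_n^k=0$ for $n\le k$ as a separate ingredient, since $\binom{n}{k+1}=0$ for $n\le k$ encodes it automatically. What the generating-function approach buys the paper is generality: it handles the inhomogeneous recursion with source term $\delta_n$ arising from a non-exact fine propagator, which is precisely what the multi-level extension in \cref{thm:multi-level_result} requires and where a naive double induction becomes considerably messier. Your two anticipated obstacles are benign: the bookkeeping closes as just described, and the Lipschitz continuity of the defect (Lipschitz coefficient functions in \eqref{eq:(7)_Gander_Hairer}) is an implicit hypothesis in the paper and in Gander--Hairer as well.
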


Another result with averaging incorporated is given in \cite{Peddle_Haut_Wingate_2019}. The work done in \cite{Peddle_Haut_Wingate_2019} has a focus on dealing with fast oscillations which appear due to a skew Hermitian linear operator and cause time-step constraints. These time-step constraints are circumvented by combining the coarse propagator with an averaging technique, which mitigates the oscillatory stiffness.
\begin{theorem}
Suppose the coarse propagator satisfies the same conditions as in Theorem \ref{thm:standard_two_level_estimate}. (Note: The coarse propagator solves the averaged equation \eqref{eq:averaged_problem}, in particular we consider the truncation error when the averaged problem is solved.)
Let the nonlinearity $\mathcal{N}$ satisfy the Lipschitz condition
\begin{equation}
    \max \|\mathcal{N}(t, v(\tau_1)) - \mathcal{N}(t, v(\tau_2)) \| \le \lambda \|v(\tau_1) - v(\tau_2) \|
\end{equation}
and 
\begin{equation}
    M = \max \| \mathcal{N}(t, v(\tau))\| < \infty.
\end{equation}
Then, the error after the $k_1$ Parareal iterations can be bounded by
\begin{equation}
    \|y(t_n) -Y_n^k \| \le \frac{C^{k_1+1}}{(k_1+1)!} \exp(C) (\varepsilon \eta +  \kappa \Delta
    T_{}^{p_{1}} ) \left( \frac{\varepsilon \eta}{\Delta
    T_{}} + \kappa \Delta T_{}^{p_{1}}\right)^{k_1} .
\end{equation}
\end{theorem}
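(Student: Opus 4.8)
The plan is to adapt the Gander--Hairer framework underlying \cref{thm:standard_two_level_estimate} to the averaged setting, the essential new ingredient being that the coarse propagator $\bar G^1$ now targets the averaged flow \eqref{eq:averaged_problem} rather than the exact modulation flow. First I would write the Parareal error recursion. Since the stated result assumes the fine propagator is exact, I set $P^0 = E$, the exact propagator of \eqref{eq:modulation_equation}, and let $y(t_n)$ denote the exact solution, so that $y(t_{n+1}) = E(y(t_n))$. Subtracting the iteration \eqref{eq:parareal_iteration_with_averaging} from this and inserting the term $\bar G^1(y(t_n))$ gives
\begin{equation}
y(t_{n+1}) - Y_{n+1}^{k+1} = \left[ d(y(t_n)) - d(Y_n^{k}) \right] + \left[ \bar G^1(y(t_n)) - \bar G^1(Y_n^{k+1}) \right],
\end{equation}
where $d(x) := E(x) - \bar G^1(x)$ is the one-step defect. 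Using the Lipschitz bound \eqref{eq:(8)_Gander_Hairer} for $\bar G^1$ and writing $e_n^k := \|y(t_n) - Y_n^k\|$, this reduces to the scalar recursion $e_{n+1}^{k+1} \le \gamma\, e_n^{k} + (1 + C_2\Delta T)\, e_n^{k+1}$, with $\gamma := \mathrm{Lip}(d)$.

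The decisive step is to estimate the defect $d$ through the splitting
\begin{equation}
d(x) = \underbrace{\left( E(x) - \bar E(x) \right)}_{\text{averaging error}} + \underbrace{\left( \bar E(x) - \bar G^1(x) \right)}_{\text{truncation error}},
\end{equation}
where $\bar E$ is the exact propagator of the averaged equation \eqref{eq:averaged_problem}. For the truncation part I would invoke \eqref{eq:(7)_Gander_Hairer} together with the Lipschitz regularity of its coefficients, giving a contribution of size $\kappa\Delta T^{p_1+1}$ to both $\max\|d\|$ and $\mathrm{Lip}(d)$, exactly as in the dissipative case. For the averaging part I would estimate the oscillatory integral defining \eqref{eq:averaged_problem}: because $\mathcal L$ is skew Hermitian its exponential is a pure oscillation of frequency $O(1/\epsilon)$, and convolution against the smooth, compactly supported kernel $\rho$ suppresses the non-resonant modes. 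Integration by parts (as in Lemma 2.2 of \cite{Engquist_Tsai_2005}), combined with the bound $M$ and the Lipschitz constant $\lambda$ of $\mathcal N$, produces the factor $\epsilon\eta$. Keeping careful track of the powers of $\Delta T$ here is what separates the $\epsilon\eta$ appearing in the prefactor (inherited from $\max\|d\|/\Delta T$ in the initial-guess error) from the $\epsilon\eta/\Delta T$ appearing inside the contraction factor (inherited from $\mathrm{Lip}(d)$).

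Finally I would solve the recursion. The base case $k=0$ is pure coarse propagation, giving $e_n^0 \le (\max\|d\|)\sum_{j<n}(1+C_2\Delta T)^{j} \lesssim (\max\|d\|/\Delta T)\,e^{C_2 T}$, which supplies the prefactor $(\epsilon\eta + \kappa\Delta T^{p_1})$. Propagating the inductive ansatz $e_n^k \le A\binom{n}{k}(1+C_2\Delta T)^n$ through the recursion and using the hockey-stick identity $\sum_{j=0}^{n-1}\binom{j}{k} = \binom{n}{k+1}$ reproduces the factor $1/(k_1+1)!$ and raises the contraction factor to the power $k_1$, yielding the asserted bound after estimating $\binom{n}{k}\le n^k/k! \le (T/\Delta T)^k/k!$ and absorbing $e^{C_2T}$ and the remaining constants into $C$ and $\exp(C)$. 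I expect the main obstacle to be the averaging estimate: one must show that the defect magnitude and its Lipschitz dependence on the initial datum scale differently in $\Delta T$, so that the two occurrences $\epsilon\eta$ and $\epsilon\eta/\Delta T$ emerge correctly. This is precisely the place where the finite-time-scale-separation analysis of \cite{Haut_Wingate_2014, Peddle_Haut_Wingate_2019} enters, and where the hypotheses on $\rho$, $M$ and $\lambda$ are used in full.
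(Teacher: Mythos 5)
Your overall route coincides with the one the paper itself uses: this theorem is stated in the paper as a recalled result from \cite{Peddle_Haut_Wingate_2019}, and the closest in-paper argument is the proof of \cref{theorem:Multi-level_result_with_averaging} specialized to two levels and an exact fine propagator. Your error identity, the splitting of the coarse defect $d = E - \bar G^1$ into an averaging part $E - \bar E$ and a truncation part $\bar E - \bar G^1$, and the resulting two-index recursion are exactly the paper's decomposition into $\mathcal{M}_{1,1}$ and $\mathcal{M}_{0,1}$ followed by the recurrence $e_n^{k+1} \le \bar\alpha\, e_{n-1}^{k} + \beta\, e_{n-1}^{k+1}$. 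The differences in technique are cosmetic: you solve the recursion by induction with the hockey-stick identity where the paper uses generating functions (lemma \ref{lemma:reucrsion_equation_1}), and you motivate the $O(\epsilon\eta)$ averaging bound by integration by parts \`a la Engquist--Tsai where the paper's lemma \ref{lemma:Lemma_about_M1} obtains the Lipschitz bound from a Taylor expansion plus Gronwall's inequality.

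There is, however, a genuine gap, and it sits exactly where you yourself flag "the main obstacle." Your base case reads $e_n^0 \lesssim (\max\|d\|/\Delta T)\, e^{C_2 T}$, and the per-step estimates available (both in this paper and in \cite{Peddle_Haut_Wingate_2019}) give $\max\|d\| = O(\epsilon\eta + \kappa\Delta T^{p_1+1})$: the averaging defect over one coarse step is $O(\epsilon\eta)$ with \emph{no} factor of $\Delta T$, since the oscillatory cancellation bound is uniform in the length of the step. Plugging this into your formula yields a prefactor $(\epsilon\eta/\Delta T + \kappa\Delta T^{p_1})$, not the claimed $(\epsilon\eta + \kappa\Delta T^{p_1})$, so your assertion that the base case "supplies the prefactor" contradicts your own computation; carried through, your argument only delivers the weaker symmetric bound $\sim \frac{C^{k_1+1}}{(k_1+1)!}\left(\epsilon\eta/\Delta T + \kappa\Delta T^{p_1}\right)^{k_1+1}$. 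The fix is not a differential $\Delta T$-scaling of $\max\|d\|$ versus $\mathrm{Lip}(d)$ (the per-step magnitude does not scale like $\epsilon\eta\Delta T$); it is that the initial guess must be estimated \emph{globally} rather than by accumulating per-step defects: bound $\|y(t_n) - Y_n^0\|$ by the triangle inequality through the exact averaged trajectory, using the finite-time-scale-separation result that the exact and averaged flows stay $O(\epsilon\eta)$ apart uniformly on $[0,T]$ (no factor $n$), plus the standard global $O(T\kappa\Delta T^{p_1})$ truncation error of the coarse integrator applied to the smooth averaged problem \eqref{eq:averaged_problem}. The per-step Lipschitz constant $O(\epsilon\eta + \kappa\Delta T^{p_1+1})$ of the defect is then what enters the contraction and gets amplified by $n \sim T/\Delta T$ into $\epsilon\eta/\Delta T + \kappa\Delta T^{p_1}$. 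This asymmetric treatment of the initial guess versus the iteration is the crux that produces the stated bound, and it is the one step your proposal defers to the literature rather than proves.
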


\subsection{Multi-level result without averaging}
\label{subsec:Asymptotic_convergence_without_averaging}

The generalization of the result given in \cref{thm:standard_two_level_estimate} to the multi-level case has two important building blocks. First, a bound must be found when the fine propagator is not exact. This is necessary, as the fine propagator of the Multi-level Parareal algorithm is a Multi-level Parareal method (with one level less) and we want to investigate on which levels numerical errors emerge and how they develop as the fine solutions are passed to the coarser levels. Second, an inductive argument which takes the errors on all levels into account will be applied.

\subsubsection{A two-level result with non-exact fine propagator}
\label{subsec:two-level_result_with_non-exact_fine_propagator}

The first step is to modify the result given in \cite{Gander_Hairer_2008} to the case when the fine propagator is not exact. 
The time-step on level 0 is denoted as  $\Delta T_0$ and on level 1 as  $\Delta T_1$.

\begin{theorem}
\label{thm:two-level_bound}
We assume the same regularity conditions on $G^1$ as in  \cref{thm:standard_two_level_estimate}. Additionally, assume that the truncation error of the fine propagator $P^0$ is bounded by $c \Delta T_0^{p_0+1}$. Then, the error of the two-level Parareal algorithm with non-exact fine propagator,$P^0$, can be bounded by the following estimate
\begin{equation}
\label{eq:non-exact_fine_propagator}
\begin{split}
    \| u(T_n) - U_n^{k+1} \| & \le {n \choose k+1} (C_3 \Delta T_1^{p_1+1}) (C_1 \Delta T_1^{p_1+1})^k  (1+C_2 \Delta T_1)^{n-k-1} \\
     & \quad + (c  T \Delta T_0^{p_0}) (1+C_2 \Delta T_1)^{n-1} (1+(C_1 \Delta T_1^{p_1+1}))^{n-1} \\
     & \le \frac{C_3C_1^kT_n^{k+1}}{(k+1)!} \exp(C_2(T_n-T_{k+1})) \left( \Delta T_1 \right)^{p_1(k_1+1)} \\
     &\quad + \exp\bigg(C_2 \left(T_n-T_1 \right)+C_1 \Delta T_1^{p_1} \left(T_n-T_1\right)\bigg) c  T \Delta T_0^{p_0} .
     \end{split}
\end{equation}
\end{theorem}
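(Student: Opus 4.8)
The plan is to follow the Gander--Hairer argument underlying \cref{thm:standard_two_level_estimate}, but to retain the extra defect produced by the fact that $P^0$ no longer reproduces the exact flow $E$ over a coarse step. Writing $u(T_{n+1})=E(u(T_n))$ and subtracting the iteration \eqref{eq:parareal_iteration_with_averaging} (with $\bar G^1$ replaced by $G^1$), I would first regroup the result into three pieces,
\begin{equation}
u(T_{n+1})-U_{n+1}^{k+1}=\bigl[G^1(u(T_n))-G^1(U_n^{k+1})\bigr]+\bigl[(E-G^1)(u(T_n))-(E-G^1)(U_n^{k})\bigr]+(E-P^0)(U_n^{k}).
\end{equation}
The first two brackets are exactly those appearing in the classical two-level proof; the third bracket, $(E-P^0)(U_n^{k})$, is the new term and vanishes precisely when the fine propagator is exact.

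First I would bound the three pieces. The first bracket is controlled by the Lipschitz condition \eqref{eq:(8)_Gander_Hairer}, giving the factor $(1+C_2\Delta T_1)$. For the second bracket I would use \eqref{eq:(7)_Gander_Hairer}: the defect $E-G^1$ has a leading term of order $\Delta T_1^{p_1+1}$ with (assumed Lipschitz) coefficient functions, so the difference of defects evaluated at two arguments is bounded by $C_1\Delta T_1^{p_1+1}$ times the distance between those arguments---this is the same estimate used in \cref{thm:standard_two_level_estimate}. The third bracket is the genuinely new ingredient: $\|(E-P^0)(U_n^{k})\|$ is the error accrued by the fine propagator over one coarse interval, and accumulating the local truncation error $c\Delta T_0^{p_0+1}$ over the $\Delta T_1/\Delta T_0$ fine steps bounds it by $c\,\Delta T_1\,\Delta T_0^{p_0}$. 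Setting $e_n^{k}=\|u(T_n)-U_n^{k}\|$, these three estimates collapse everything to the scalar two-index recursion
\begin{equation}
e_{n+1}^{k+1}\le (1+C_2\Delta T_1)\,e_n^{k+1}+C_1\Delta T_1^{p_1+1}\,e_n^{k}+c\,\Delta T_1\,\Delta T_0^{p_0},
\end{equation}
with $e_0^{k}=0$ (exact initial value) and the zeroth iterate seeded by the coarse defect $C_3\Delta T_1^{p_1+1}$.

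The crux is then to solve this forced recursion, which I would do by superposition. The homogeneous part (forcing removed) is precisely the recursion solved in the classical proof; unrolling it by induction on $n$ and $k$ reproduces the binomial factor ${n\choose k+1}$, the defect powers $(C_3\Delta T_1^{p_1+1})(C_1\Delta T_1^{p_1+1})^k$ and the stability factor $(1+C_2\Delta T_1)^{n-k-1}$, i.e.\ the first term of \eqref{eq:non-exact_fine_propagator}. The particular part is the response to the constant forcing $c\,\Delta T_1\,\Delta T_0^{p_0}$: here the fine-propagator error is amplified both by the coarse Lipschitz factor $(1+C_2\Delta T_1)$ at each step and by the iteration coupling $(1+C_1\Delta T_1^{p_1+1})$, and summing the contribution injected at each of the $n$ steps turns the per-interval forcing into $c\,T\,\Delta T_0^{p_0}$ (using $n\Delta T_1=T_n$) multiplied by the two amplification factors to the power $n-1$. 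I expect the main obstacle to be exactly this bookkeeping---correctly tracking the \emph{double} amplification of the fine defect (once through the step index $n$ via $1+C_2\Delta T_1$, once through the iteration coupling via $1+C_1\Delta T_1^{p_1+1}$) and verifying that the telescoping of the forcing over the steps produces the clean factor $c\,T\,\Delta T_0^{p_0}$ rather than a messier geometric sum. Finally, passing from the first (binomial/power) bound to the second (exponential/factorial) bound in \eqref{eq:non-exact_fine_propagator} is routine: I would use ${n\choose k+1}\le n^{k+1}/(k+1)!$ together with $n\Delta T_1=T_n$ to collect the powers of $\Delta T_1$ into $(\Delta T_1)^{p_1(k_1+1)}$, and $(1+x)^m\le \exp(mx)$ with $(n-1)\Delta T_1=T_n-T_1$ to convert the two stability products into the stated exponentials.
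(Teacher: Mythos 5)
Your proposal is correct, and its skeleton coincides exactly with the paper's proof: the same regrouping of the Parareal update into a Lipschitz term, a difference of coarse defects, and the new fine-propagator defect $(E-P^0)(U_n^{k})$; the same per-interval accumulation giving $c\,\Delta T_1\,\Delta T_0^{p_0}$; and the same two-index recursion with $\alpha=C_1\Delta T_1^{p_1+1}$, $\beta=1+C_2\Delta T_1$, $\delta=c\,\Delta T_1\,\Delta T_0^{p_0}$, $\gamma=C_3\Delta T_1^{p_1+1}$. Where you genuinely depart from the paper is the crux step of solving that recursion. The paper does it with generating functions (\cref{lemma:reucrsion_equation_1}, supported by \cref{lemma:estimate_with_binomial_theorem1} and \cref{lemma:estimate_with_binomial_theorem2}): it forms $\rho_k=\sum_{n\ge 1}e_n^k\zeta^n$, solves the resulting one-index recursion in $k$, and extracts coefficients, using an intermediate-value argument to express the forced part as $n\delta\beta^{n-1}(1+\alpha_{0n})^{n-1}$ with $\alpha_{0n}\in[0,\alpha]$. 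You instead use superposition: dominate the inequality by the corresponding equality recursion (legitimate since $\alpha,\beta\ge 0$), then split into a homogeneous part and a forced part. The homogeneous part is the classical Gander--Hairer recursion, and its binomial bound ${n \choose k+1}\gamma\alpha^k\beta^{n-k-1}$ does follow by induction via Pascal's rule, as you assert. The bookkeeping you flag as the main obstacle in the forced part resolves cleanly if you maximize over the iteration index: setting $q_n=\max_k p_n^k$ gives $q_n\le(\alpha+\beta)q_{n-1}+\delta$, hence $q_n\le n\delta(\alpha+\beta)^{n-1}\le n\delta\beta^{n-1}(1+\alpha)^{n-1}$ because $\beta\ge 1$, which after $n\Delta T_1\le T$ is exactly the second term of \eqref{eq:non-exact_fine_propagator}. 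Your route is more elementary and self-contained; what the paper's machinery buys is the (marginally sharper, but immediately relaxed) factor $(1+\alpha_{0n})^{n-1}$ and a template that the paper reuses verbatim for the multi-level and averaged results.
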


The grid points of the coarse time grid are donted as $T_0, T_1, \dots, T_N$ in the theorem. Both the fine propagator and the coarse propagator solve the unaveraged problem. The bound in \cref{eq:non-exact_fine_propagator} is the two-level estimate from \cref{thm:standard_two_level_estimate} with an additional term depending on the accuracy of the fine propagator.

\begin{proof}

For the following equation, which describes one Parareal correction iteration, an estimate must be found
\begin{equation*}
    u(T_n) - U_n^{k+1}  = E(u(T_{n-1})) - G^1(U_{n-1}^{k+1}) - P^0(U_{n-1}^{k}) +
    G^1(U_{n-1}^{k}) ,
\end{equation*}
where $E$ denotes the exact, $G^1$ the coarse and $P^0$ the fine propagators.
The equation can be rewritten as
\begin{align*}
 u(T_n) - U_n^{k+1}
    & = E(u(T_{n-1})) - G^1(u(T_{n-1})) \\
    & \quad + G^1(U_{n-1}^{k}) - E(U_{n-1}^k) \\
    & \quad +G^1(u(T_{n-1}))  -G^1(U_{n-1}^{k+1})\\
    & \quad + E(U_{n-1}^k)- P^0(U_{n-1}^{k}) .
\end{align*} 
 To bound the first two lines relation \eqref{eq:(7)_Gander_Hairer} can be applied. The third line can be estimated by applying the inequality \eqref{eq:(8)_Gander_Hairer}. The last line measures the accuracy of the fine propagator 
 \begin{equation*}
     \| E(U_{n-1}^k)- F(U_{n-1}^{k}) \|
      \le c \Delta T_1 \Delta T_0^{p_0},
 \end{equation*} 
where $ \Delta T_0$ denotes the fine time-step and $p_0$ is the order of accuracy of the fine propagator. The error constant of time-stepping methods, like Runge-Kutta methods, contains the length of the integration interval as a factor. That is why the coarse timestep $\Delta T_1$ can be found in the above estimate.

Consequently, we arrive at the following estimate
\begin{equation*}
\begin{split}
    & \|u(T_n) - U_n^{k+1} \| \\ 
    & \qquad \le C_1 \Delta T_1^{p_c+1} \| u(T_{n-1})- U_{n-1}^k \|
    + (1+C_2 \Delta T_1) \| u(T_{n-1})- U_{n-1}^{k+1} \|
    + c \Delta T_1 \Delta T_0^{p_0}.
    \end{split}
\end{equation*}
Compared to earlier results (see for example the proof in \cite{Gander_Hairer_2008}), we have an additional term which comes from the fine propagator. Moreover, $ \Delta T_1$ denotes the coarse time step and $p_1$ is the order of accuracy of the coarse propagator.

The estimate motivates to consider the following recurrence relation
\begin{equation}
\label{eq:recursion_relation}
    e_n^{k+1} = \alpha e_{n-1}^k + \beta e_{n-1}^{k+1} + \delta_n, \qquad \qquad
    e_n^0 = \gamma + \beta e_{n-1}^0 ,
\end{equation}
with $\delta_0 =0$ and $\delta_n = \delta$ for $n \ge 1$. In addition, we set $\alpha = C_1 \Delta T_1^{p_1+1}$,
$\beta = (1+C_2 \Delta T_1)$,
$\delta = c \Delta T_1 \Delta T_0^{p_0}$ and
$\gamma = C_3 \Delta T_1^{p_1+1}$.

Remark about the $e_n^0$: The coarse propagator is used to compute the initial guess. For the error of the initial guess the following relations hold:
\begin{align*}
    e_n^0 =\| u(t_n) -U_n^0 \| &= \|E(u(t_{n-1})) - G^1(U_{n-1}^0) \| \\
    & \le \|E(u(t_{n-1})) - G^1(u(t_{n-1})) \| + \| G^1(u(t_{n-1})) - G^1(U_{n-1}^0)\| \\
    & \le C_3 \Delta T_1^{p_1+1} + (1+C_2 \Delta T_1) \| G^1(u(t_{n-1})) - G^1(U_{n-1}^0)\| \\
    & = C_3 \Delta T_1^{p_1+1} + (1+C_2 \Delta T_1) \| e_{n-1}^0 ,
\end{align*}
where the relations \eqref{eq:(7)_Gander_Hairer} and \eqref{eq:(8)_Gander_Hairer} were applied.

An estimate for $e_n^k$ is given in 
Lemma \ref{lemma:reucrsion_equation_1} through
\begin{align*}
     e_n^k & \le {n \choose k+1}    \gamma \alpha^k  \beta^{n-k-1} + n \delta \beta^{n-1} (1+\alpha_{0n})^{n-1} \\
     & \le {n \choose k+1} (C_3 \Delta T_1^{p_1+1}) (C_1 \Delta T_1^{p_1+1})^k  (1+C_2 \Delta T_1)^{n-k-1} \\
     & \quad + n (c \Delta T_1 \Delta T_0^{p_0}) (1+C_2 \Delta T_1)^{n-1} (1+(C_1 \Delta T_1^{p_1+1}))^{n-1} \\
     & \le {n \choose k+1} (C_3 \Delta T_1^{p_1+1}) (C_1 \Delta T_1^{p_1+1})^k  (1+C_2 \Delta T_1)^{n-k-1} \\
     & \quad + (c T \Delta T_0^{p_0}) (1+C_2 \Delta T_1)^{n-1} (1+(C_1 \Delta T_1^{p_1+1}))^{n-1} .
\end{align*}
For the last inequality, we exploited that $n \Delta T_1 \le  T$, where $[0,T]$ is the interval over which we want to solve the differential equation. Especially, we have $N \Delta T_1 =  T$, where $N$ denotes the number of coarse intervals. \\

{\color{white}1}
\end{proof}

\subsubsection{Generalization to multiple levels}
\label{subsec:multi-level_result}

The next step is to investigate the error estimate for the Multi-level Parareal method, and in particular how the errors introduced on the different levels influence the total error. We assume that we are given $L$ levels, where level 0 corresponds to the finest level and level $L-1$ is the level with coarsest grid. 

The proof for the Multi-Level method will be an inductive proof where we can apply similar arguments as in the two-level case. The difference is that the fine propagator is not a basic ODE solver, like a Runge-Kutta scheme, but it is the Multi-Level Parareal method on the finer grid. Only on the finest level, level 0, we apply a basic ODE solver. In particular, this changes the appearance of $\delta$ in the proof.

The size of the time-step is different for different levels. Therefore, $\alpha, \beta, \gamma$ and $\delta$, which are defined in the proof in the previous \cref{subsec:two-level_result_with_non-exact_fine_propagator}, should have an index $l$. More precisely, we write $\alpha_l, \beta_l, \gamma_l$ and $\delta_l$ instead of $\alpha, \beta, \gamma$ and $\delta$. In addition, the number of Parareal iterations does not have to be the same for each level. Consequently, $k$ should also have an index $l$ and we should write $k_l$.

Moreover, we introduce a coarsening factor $N$ which relates the time-steps on the different levels. Let $ \Delta T_0$ be the time-step on the finest grids on level 0. Then, the next finest grids are on level 1 and the time-step is given by $\Delta T_1 = N \Delta T_0$. For the next coarsest level, level 2, we have a time-step $\Delta T_2 = N \Delta T_1= N^2 \Delta T_0$ and so on. See also \cref{subsec:lemmata_multiple_levels} and especially the relations \cref{eq:relation_for_Delta_T_l_1} and \cref{eq:relation_for_Delta_T_l_2}.

\begin{theorem}
\label{thm:multi-level_result}
Let the propagators for the levels $1, \dots, L-1$ satisfy the conditions on the coarse propagator in \cref{thm:standard_two_level_estimate}.  Suppose that the fine propagator, on level 0, satisfies the condition on the fine propagator from \cref{thm:two-level_bound}.
Then we can show that the following error estimate for the Multi-level case holds
\begin{equation}
    e_{n,l}^{k_l} \le \sum_{\bar l =1}^l E_{ \bar l} \prod_{j= \bar l +1}^l A_{j} + \delta_0 \prod_{ \bar l = 1 }^l A_{ \bar l},
\end{equation}
where 
\begin{align} 
    E_l &= {n \choose k_l+1}    \gamma_l \alpha^{k_l}_l  \beta^{N-{k_l}-1}_l  \label{eq:E_l__} \\
    \delta_0 &= c \Delta T_1 \Delta T_0^{p_0} \label{eq:delta_0}\\
    A_l &=  N \beta^{N-1}_l  (1+\alpha_{0N,l})^{N-1}. \label{eq:A_l__} 
\end{align}
We have $\alpha_l = C_{1,l} \Delta T_l^{p_l+1} , 
\beta_l = (1+C_{2,l} \Delta T_l), \gamma_l =  C_{3,l} \Delta T_l^{p_l+1} \text{ and } \alpha_{0N,l} \in [0,\alpha_l] $ .
\end{theorem}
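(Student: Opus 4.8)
The plan is to run an induction on the number of levels $l$, using \cref{thm:two-level_bound} both as the base case and, through its underlying recurrence \eqref{eq:recursion_relation}, as the engine of the inductive step. The key observation is that at \emph{every} level the Parareal correction has exactly the structure analysed in \cref{subsec:two-level_result_with_non-exact_fine_propagator}; the only thing that changes with the level is the meaning of the inhomogeneous term $\delta$. On level $l$ the fine propagator is the Multi-level Parareal scheme on levels $0,\dots,l-1$, so the fine-propagator error $\|E-\bar P^{l-1}\|$ over a single coarse interval of level $l$ is no longer a Runge--Kutta truncation term but the full accumulated error of the lower scheme over that interval, which is precisely the quantity the induction hypothesis controls. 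This is why $\delta_0$ (the genuine one-step error on level $0$) is propagated upward by the amplification factors $A_l$.

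For the base case $l=1$ the fine propagator is an honest one-step solver on level $0$, so \cref{thm:two-level_bound} applies verbatim with $\delta=\delta_0=c\,\Delta T_1\Delta T_0^{p_0}$. Evaluating that bound at $n=N$ (the number of fine sub-steps inside one coarse interval) and matching its two summands against \eqref{eq:E_l__}--\eqref{eq:A_l__} gives $e_{N,1}^{k_1}\le E_1+\delta_0 A_1$, which is the asserted formula for $l=1$, the products over empty index ranges being equal to $1$.

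For the inductive step I would assume the estimate on level $l-1$, namely
\[
  e_{N,l-1}^{k_{l-1}} \le \sum_{\bar l=1}^{l-1} E_{\bar l}\prod_{j=\bar l+1}^{l-1}A_j + \delta_0\prod_{\bar l=1}^{l-1}A_{\bar l},
\]
and then repeat the derivation of \cref{thm:two-level_bound} on level $l$: splitting $u(T_n)-U_n^{k+1}$ into the coarse-consistency, coarse-Lipschitz and fine-accuracy contributions and invoking \eqref{eq:(7)_Gander_Hairer} and \eqref{eq:(8)_Gander_Hairer} produces the recurrence \eqref{eq:recursion_relation} with coefficients $\alpha_l,\beta_l,\gamma_l$ and inhomogeneity $\delta_l=\|E-\bar P^{l-1}\|$. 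Since one coarse interval of level $l$ is traversed by the level-$(l-1)$ scheme in $N$ of its own steps, the hypothesis (read at $n=N$) gives $\delta_l\le e_{N,l-1}^{k_{l-1}}$. Solving the recurrence with \cref{lemma:reucrsion_equation_1} yields $e_{N,l}^{k_l}\le E_l+A_l\,\delta_l$, and substituting the bound for $\delta_l$ and collecting terms
\begin{align*}
  &E_l + A_l\!\left(\sum_{\bar l=1}^{l-1} E_{\bar l}\prod_{j=\bar l+1}^{l-1}A_j + \delta_0\prod_{\bar l=1}^{l-1}A_{\bar l}\right) \\
  &\qquad = \sum_{\bar l=1}^{l} E_{\bar l}\prod_{j=\bar l+1}^{l}A_j + \delta_0\prod_{\bar l=1}^{l}A_{\bar l},
\end{align*}
using $A_l\prod_{j=\bar l+1}^{l-1}A_j=\prod_{j=\bar l+1}^{l}A_j$ and $E_l=E_l\prod_{j=l+1}^{l}A_j$. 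This closes the induction.

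The re-derivation of the recurrence and the telescoping of the products are routine: the former is identical to \cref{thm:two-level_bound}, the latter purely algebraic. The main obstacle I expect is the correct identification and handling of $\delta_l$. One must argue that the fine-propagator error on level $l$ is genuinely the \emph{entire} Multi-level error $e_{N,l-1}^{k_{l-1}}$ generated on levels $l-1,\dots,0$ rather than a mere truncation term, and that this a~priori bound may be treated as a single constant inhomogeneity in the level-$l$ recurrence uniformly over all coarse intervals. Keeping the bookkeeping of the coarsening factor $N$ consistent --- so that each $A_l$ carries the factor $N$ and the exponent $N-1$ arising from the $N$ sub-steps executed on the level below --- is the step where an index slip would propagate through the whole estimate.
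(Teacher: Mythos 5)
Your proposal is correct and takes essentially the same approach as the paper: the paper likewise observes that on level $l$ the two-level recurrence of \cref{thm:two-level_bound} holds with the inhomogeneity replaced by $\delta_{l-1}=e_{N,l-1}^{k_{l-1}}$ (the full accumulated error of the lower-level scheme over one coarse interval, evaluated at $n=N$), solves it via Lemma \ref{lemma:reucrsion_equation_1} to obtain $e_{n,l}^{k_l}\le E_l+A_l\,\delta_{l-1}$, and then unrolls the levels by induction. The only organizational difference is that the paper delegates your telescoping induction step to Lemma \ref{lemma:first_estimate_theorem2}, whose proof is exactly the computation you wrote out.
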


Especially, the derived error estimate is a sum. The different terms of the sum are composed of the error $E_l$ emerging on  level $l$ and amplification factors $A_l$ which amplify the errors made on the finer levels as the fine solutions are passed to the coarser levels.

\begin{proof}
As the Multi-level method is a special case of the two-level method, just as in  \cref{thm:two-level_bound} we arrive at the recurrence relation
\begin{equation*}
    e_n^{k_l+1} = \alpha_l e_{n-1}^{k_l} + \beta_l e_{n-1}^{k_l+1} + \delta_{l-1}, \qquad \qquad
    e_n^0 = \gamma_l + \beta_l e_{n-1}^0 .
\end{equation*}
The only difference is that $\delta_{l-1}$ admits a different form. Particularly, $\delta_{l-1}$ is the error bound of a Multi-level Parareal method with one level less. Like in the proof of \cref{thm:two-level_bound} we find that
\begin{equation*}
     e_{n,l}^{k_l} \le {n \choose k_l+1}    \gamma_l \alpha^{k_l}_l  \beta^{n-{k_l}-1}_l + n \delta_{l-1} \beta^{n-1}_l (1+\alpha_{0n,l})^{n-1}
\end{equation*}
for $k<n$.
The index $l$ denotes the level. For the finer levels we have to assume that $n=N$ in the error estimate, because we have to to do the time-stepping completely through the fine grid to provide the fine solution for the next coarser level. We can rewrite the above estimate as
\begin{equation*}
   e_{n,l}^{k_l} \le E_{l} + A_{l} \delta_{l-1} ,
\end{equation*}
where $\delta_{l-1} = e_{N,l-1}^{k_{l-1}}$. Additionally, we have introduced $E_{l} = {n \choose k_l+1}    \gamma_l \alpha^{k_l}_l  \beta^{N-{k_l}-1}_l$ (relation \eqref{eq:E_l}) and 
$ A_{l} = N \beta^{N-1}_l  (1+\alpha_{0N,l})^{N-1}$ (relation \eqref{eq:A_l}). This estimate holds for any $n \leq  N$. According to Lemma \ref{lemma:first_estimate_theorem2} we have
\begin{equation*}
    e_{n,l}^{k_l} \le \sum_{\bar l =1}^l E_{ \bar l} \prod_{j= \bar l +1}^l A_{j} + \delta_0 \prod_{ j = 1 }^l A_{ j}.
\end{equation*}
The Proof of Lemma \ref{lemma:first_estimate_theorem2} contains the {\bf inductive argument} announced earlier.

{\color{white} 1}
\end{proof}

\begin{corollary}
Assuming that a constant coarsening factor $N$ relates the different levels and the number of iterations on the levels is constant, i.e. $k_l = k$ for all $l=1,\dots,L-1$, we get the following error bound

\begin{equation}
\begin{split}
    \| u(T_n) - U_n^{k+1} \| 
    & \le  cT { \Delta T_0^{p_0}} \exp \left(C_2 T \frac{1-1/N^L}{1-1/N} +
      C_1 T \Delta T_1^{p_c}
      \frac{1-1/N^{L(p_c+1)}}{1-1/N^{p_c+1}}\right) + \\
    & \hspace{0.4cm} \dots + \exp \left(\frac{C_2 T}{1-1/N} +
      \frac{C_1 T \Delta T_1}{1-1/N^{p_c+1}} \right) \dots \\ &
    \hspace{1.7cm}
   C_3 C_1^{k} {N \choose k +1} 
    \frac{1}{1-1/N^{k p_c +k+p_c}} {\Delta T_1^{k p_c + k + p_c+1}}
\end{split}
\end{equation}
(The notational conventions from \cref{subsec:lemmata_multiple_levels} are used) Particularly, we recover the accuracy order of the two level scheme.
\end{corollary}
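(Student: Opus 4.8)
The plan is to specialize the general multi-level bound of \cref{thm:multi-level_result} to the homogeneous situation of the hypothesis and then evaluate the resulting products and sums as geometric series in the coarsening factor $N$. First I would fix the assumptions that make all level-dependent quantities comparable: a single coarsening factor $N$, a common iteration count $k_l=k$, and common solver constants and orders $C_{1,l}=C_1$, $C_{2,l}=C_2$, $C_{3,l}=C_3$, $p_l=p_c$. Under these assumptions the scaling relations $\Delta T_l = N^{l-1}\Delta T_1 = N^{l}\Delta T_0$ together with the normalization $N^{L-1}\Delta T_1 = T$ let me rewrite each of $\alpha_l,\beta_l,\gamma_l$ as an explicit monomial in $\Delta T_1$ (or $\Delta T_0$) times a pure power of $N$. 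The starting point is then just the bound $e_{n,L-1}^{k+1}\le \sum_{\bar l=1}^{L-1}E_{\bar l}\prod_{j=\bar l+1}^{L-1}A_j + \delta_0\prod_{j=1}^{L-1}A_j$ with these substitutions inserted.

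Second, I would bound the amplification factors. The elementary inequality $(1+x)^{m}\le\exp(mx)$ turns each $A_l = N\beta_l^{N-1}(1+\alpha_{0N,l})^{N-1}$ into $N\exp\bigl((N-1)C_2\Delta T_l\bigr)\exp\bigl((N-1)\alpha_l\bigr)$, using $\alpha_{0N,l}\le\alpha_l$. Multiplying such factors over a range of levels produces a single exponential whose argument is a sum $\sum_j (N-1)C_2\Delta T_j + \sum_j (N-1)\alpha_j$, while the $N$'s collect into a pure power of $N$ equal to the number of factors in the product. Because the time-steps fall off geometrically toward the finer levels, both of these sums are finite geometric series: one with ratio $1/N$ (the $C_2$ part) and one with ratio $1/N^{p_c+1}$ (the $\alpha$ part). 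Evaluating them with $\sum_i r^{i} = (1-r^{m})/(1-r)$ is what produces the closed-form factors $(1-1/N^{L})/(1-1/N)$ and $(1-1/N^{L(p_c+1)})/(1-1/N^{p_c+1})$ inside the exponentials.

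Third, I would treat the two families of terms separately. For the leading $\delta_0$-term I collect the product $\prod_{j=1}^{L-1}N = N^{L-1}$ and combine it with $\delta_0 = c\Delta T_1\Delta T_0^{p_0}$; the normalization $N^{L-1}\Delta T_1 = T$ collapses this to $cT\Delta T_0^{p_0}$, leaving the exponential geometric-series factor and giving exactly the first displayed term. For the iteration terms $E_{\bar l}\prod_{j=\bar l+1}^{L-1}A_j$ I substitute $\gamma_{\bar l}\alpha_{\bar l}^{k} = C_3C_1^{k}\Delta T_{\bar l}^{(p_c+1)(k+1)}$, so that the surviving power of the time-step is $(p_c+1)(k+1) = kp_c+k+p_c+1$ as in the statement; summing over $\bar l$ again yields a geometric series, bounded by its extreme term times the factor $(1-1/N^{kp_c+k+p_c})^{-1}$, which together with $\binom{N}{k+1}$ and the prefactor $\exp\bigl(C_2T/(1-1/N) + C_1T\Delta T_1/(1-1/N^{p_c+1})\bigr)$ reproduces the last displayed term. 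Reading off the surviving powers $\Delta T_0^{p_0}$ and $\Delta T_1^{(p_c+1)(k+1)}$, and noting that every additional multi-level factor is an $O(1)$ geometric sum in $1/N$, then gives the closing claim that the two-level accuracy order of \cref{thm:two-level_bound} is recovered.

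The step I expect to be the main obstacle is the careful bookkeeping in the geometric summations. One must track which level dominates each sum — the coarsest level maximizes $\Delta T_l$ and hence $\alpha_l$ and $\beta_l$, whereas the factor $N^{L-1}$ from the amplification product and the normalization $N^{L-1}\Delta T_1 = T$ pull in the opposite direction — and then verify that the exponents of $N$ and of $\Delta T_1$ combine into precisely the stated closed forms rather than merely comparable ones. In particular, the index ranges and the repeated use of $(N-1)\le N$ must be chosen consistently so that the geometric ratios $1/N$ and $1/N^{p_c+1}$ and the truncation lengths $L$ and $L(p_c+1)$ come out as written. This is routine but error-prone, and it is exactly where the identification of the leading order, the heart of the corollary, is decided.
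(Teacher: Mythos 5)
Your proposal is correct and takes essentially the same route as the paper: the paper's proof simply cites \cref{thm:multi-level_result} together with Lemmas \ref{lemma:A5} and \ref{lemma:A6} (resting on Lemmas \ref{lemma:Lemma7} and \ref{lem:representations_for_E_1_-E_2_}), and your three steps — specializing the multi-level bound, bounding $E_l$ and $A_l$ via $(1+x)^m \le \exp(mx)$ and the coarsening relations, then evaluating the products and sums as geometric series in $1/N$ and $1/N^{p_c+1}$ — reproduce exactly the content of those lemmas. The bookkeeping you flag as the delicate point (absorbing the $N^{L-1-l}$ factors from the amplification product into one power of the time-step, so that the geometric ratio in the iteration terms comes out as $1/N^{kp_c+k+p_c}$ rather than $1/N^{(k+1)(p_c+1)}$) is precisely how the paper's Lemma \ref{lemma:Lemma7} handles it.
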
 
 
\begin{proof}
The corollary is an immediate consequence of the results given in \cref{thm:multi-level_result}, lemma \ref{lemma:A5} and lemma \ref{lemma:A6}.
{\color{white} 1}
\end{proof}

\subsection{Multi-level result with averaging}
\label{subsec:Asymptotic_convergence_with_averaging}

In this section an 
convergence result for the Multi-level Parareal algorithm with averaging shall be presented.  Assuming we have $L$ levels in total, the averaging is done on the levels $1, \dots, L-1$. Only on level 0, the level with the finest grids, the full, unaveraged system is solved by the finest propagator.

Especially, the finest propagator solves the following exact equation
\begin{equation}
\label{eq:unaveraged_equation}
\frac{du}{dt}(t) + e^{t/\epsilon L}\mathcal{N}(e^{-t/\epsilon L} u(t)) = 0.    
\end{equation}

The coarse propagators do not solve the exact equation. Instead, they solve averaged equations, given through 
\begin{equation}
\label{eq:averaged_equation}
\frac{d \bar u(t)}{dt}(t) +  e^{t/\epsilon L} \frac{1}{\eta_l} \int_{-\eta_l/2}^{\eta_l/2} e^{s/\epsilon L}\mathcal{N}(e^{-s/\epsilon L} \bar u(t)) \ ds = 0 .    
\end{equation}
This is the crucial difference between the Parareal algorithms with and without averaging.

We will use the following notation to denote the propagators relevant for this section:

\begin{itemize}
\item $E$ - analytically exact solver for the unaveraged equation \eqref{eq:unaveraged_equation}
\item $\bar E^l$ - analytically exact solver for the averaged equation \eqref{eq:averaged_equation}
\item $\bar G^l$ - numerical solver of the averaged problem \eqref{eq:averaged_problem} on level $l$, $l= 1, \dots, L-1$, also the coarse propagator for $\bar P^l$
\item $\bar P^l$ - Multi-level Parareal method with averaging with the coarse propagator $\bar G_l$ and  fine propagator $\bar P^{l-1}$ for $l>1$ (or $P^0$ for $l=1$)
\item $P^0$ - numerical solution of the unaveraged problem \eqref{eq:unaveraged_equation} on level $0$ 
\end{itemize}

For the propagators $\bar G^l$, $l= 1, \dots, L-1$, we impose the following conditions on the truncation error
\begin{equation}
\label{eq:(7)_Gander_Hairer_modified}
    E(x) - \bar G^l(x) = \bar c^{\ l}_{p+1}(x) \Delta T_l^{p+1} + \bar c^{\ l}_{p+2} (x) \Delta T_l^{p+2} + \dots, \quad \text{ and }
\end{equation}
 \begin{equation}
\label{eq:(8)_Gander_Hairer_modified}
    \|\bar G^l(x) - \bar G^l(y)\| \le (1+C^l_2\Delta T_l) \|x - y\| .
\end{equation}

\begin{theorem}
\label{theorem:Multi-level_result_with_averaging}
Let the error of the coarse propagators $\bar G^l$ be bounded by the truncation error $\bar C^l_1 \Delta T_l^{p_1+1}$. Moreover, suppose $\bar G^l$ satisfies relation \eqref{eq:(7)_Gander_Hairer_modified} and the Lipschitz condition \eqref{eq:(8)_Gander_Hairer_modified} for $l= 1, \dots, L-1$.
  Let the fine propagator, on level 0, satisfy the condition on the fine propagator from \cref{thm:two-level_bound}.
Then we can show that the following error estimate for the Multi-level Parareal method with averaging holds
\begin{equation}
    e_{n,l}^{k_l} \le \sum_{\bar l =1}^l \bar E_{ \bar l} \prod_{j= \bar l +1}^l A_{j} + \delta_0 \prod_{ \bar l = 1 }^l A_{ \bar l},
\end{equation}
where 
\begin{align} 
    \bar E_l &= {n \choose k_l+1} \bar \gamma_l \bar \alpha^{k_l}_l  \beta^{N-{k_l}-1}_l  \label{eq:E_l} \\
    \delta_0 &= c \Delta T_1 \Delta t^{p_0} \label{eq:delta_0}\\
    A_l &=  N \beta^{N-1}_l  (1+\bar \alpha_{0N,l})^{N-1}. \label{eq:A_l} 
\end{align}
We have 
\begin{align*}
    &\bar \alpha_l = C \eta_l \epsilon + C \Delta T_l^{p_c+1} \max_{\omega_0 \le \omega} \left\{ \left|\frac{\epsilon}{\omega} \right| \kappa(\epsilon,\eta_l, \omega) \right\} \\
    &\beta_l = (1+C_{2,l} \Delta T_l) \\
    &\bar \gamma_l =  C \eta_l \epsilon \|\mathcal{\tilde M}_1\| + C \Delta T_l^{p_c+1} \max_{\omega_0 \le \omega} \left\{ \left|\frac{\epsilon}{\omega} \right| \kappa(\epsilon,\eta_l, \omega) \right\} \|\mathcal{\tilde M}_0\|  \text{ and } \\
    &\alpha_{0N,l} \in [0,\bar \alpha_l]  .
\end{align*}

\end{theorem}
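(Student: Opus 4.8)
The plan is to follow the same two-stage scheme that proves \cref{thm:multi-level_result}, since the Multi-level Parareal method with averaging differs from the unaveraged one only in the error budget of the coarse propagators. First I would reduce one correction iteration on level $l$ to the recurrence
\[
    e_{n,l}^{k_l+1} = \bar\alpha_l\, e_{n-1}^{k_l} + \beta_l\, e_{n-1}^{k_l+1} + \delta_{l-1},
    \qquad
    e_{n,l}^{0} = \bar\gamma_l + \beta_l\, e_{n-1}^{0},
\]
which has exactly the shape treated in the proof of \cref{thm:two-level_bound}. Once the constants $\bar\alpha_l,\beta_l,\bar\gamma_l$ are identified, the per-level bound $e_{n,l}^{k_l}\le \bar E_l + A_l\,\delta_{l-1}$ follows verbatim from the recursion lemma, Lemma~\ref{lemma:reucrsion_equation_1}, and the cross-level accumulation $e_{n,l}^{k_l}\le \sum_{\bar l=1}^{l}\bar E_{\bar l}\prod_{j=\bar l+1}^{l}A_j + \delta_0\prod_{\bar l=1}^{l}A_{\bar l}$ follows from the inductive-accumulation lemma, Lemma~\ref{lemma:first_estimate_theorem2}. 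On level $0$ nothing changes, since there the full unaveraged equation \eqref{eq:unaveraged_equation} is solved by the basic scheme $P^0$, so $\delta_0 = c\,\Delta T_1\,\Delta t^{p_0}$ is inherited unchanged from \cref{thm:two-level_bound}.

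The genuinely new work is the derivation of $\bar\alpha_l$ and $\bar\gamma_l$, and for this I would split the coarse-propagator defect into an averaging part and a truncation part,
\[
    E(x) - \bar G^l(x) = \bigl(E(x) - \bar E^l(x)\bigr) + \bigl(\bar E^l(x) - \bar G^l(x)\bigr).
\]
The second bracket is the truncation error of the numerical scheme applied to the averaged equation \eqref{eq:averaged_equation}; by assumptions \eqref{eq:(7)_Gander_Hairer_modified} and \eqref{eq:(8)_Gander_Hairer_modified} it contributes the $C\,\Delta T_l^{p_c+1}\max_{\omega_0\le\omega}\{|\epsilon/\omega|\,\kappa(\epsilon,\eta_l,\omega)\}$ term, in which the maximum over frequencies encodes the reduced derivative bound of the averaged solution that governs the truncation constant of a $p_c$-order scheme. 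The first bracket is the averaging error: replacing \eqref{eq:unaveraged_equation} by \eqref{eq:averaged_equation} convolves the oscillatory right-hand side with the scaled kernel $\rho(\cdot/\eta_l)$. Expanding this convolution on the spectral side of $\mathcal L$, each mode of frequency $\omega/\epsilon$ is damped by the kernel factor $\kappa(\epsilon,\eta_l,\omega)$ and leaves a residual of size $O(\epsilon\eta_l)$, which is exactly the decay quantified in the Engquist--Tsai estimate cited in the introduction. This yields the $C\,\eta_l\epsilon$ contributions to $\bar\alpha_l$ and $\bar\gamma_l$; the moment norms $\|\mathcal{\tilde M}_0\|$ and $\|\mathcal{\tilde M}_1\|$ enter $\bar\gamma_l$ because the initial-guess defect $E-\bar G^l$ picks up the zeroth and first moments of the kernel against the nonlinearity, while the Lipschitz-type defect feeding $\bar\alpha_l$ sees only the first moment.

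With $\bar\alpha_l,\beta_l,\bar\gamma_l$ so identified, the rest is assembly: substitute them into the per-level recurrence, invoke Lemma~\ref{lemma:reucrsion_equation_1} to obtain $\bar E_l$ and $A_l$ in the stated forms \eqref{eq:E_l} and \eqref{eq:A_l} (with $\bar\alpha_{0N,l}\in[0,\bar\alpha_l]$ the usual mean-value constant from linearizing the $(1+\bar\alpha_l)$-type products), and then close the induction over $l=1,\dots,L-1$ exactly as in \cref{thm:multi-level_result}.

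I expect the main obstacle to be the rigorous estimate of the averaging error $\|E(x)-\bar E^l(x)\|$ with the correct dependence on $\eta_l$, $\epsilon$, and the frequency content through $\kappa(\epsilon,\eta_l,\omega)$ and $\max_{\omega_0\le\omega}$. Everything else is a transcription of the unaveraged argument, but this bound requires controlling the convolution of the kernel against the genuinely nonlinear, frequency-mixed right-hand side of \eqref{eq:averaged_equation}, keeping track of how the finite window and the non-integer number of periods inside the support of $\rho$ produce the $O(\epsilon\eta_l)$ residual uniformly over the relevant band $\omega_0\le\omega$. A secondary subtlety is that, unlike in \cref{thm:standard_two_level_estimate}, the exact averaged propagator $\bar E^l$ differs from level to level, so I must verify that the Lipschitz constant can still be taken of the unaveraged form $\beta_l=(1+C_{2,l}\Delta T_l)$ uniformly in $l$, i.e.\ that averaging does not degrade the one-step stability estimate.
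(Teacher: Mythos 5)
Your proposal follows essentially the same route as the paper's proof: the splitting $E(x)-\bar G^l(x) = \bigl(E(x)-\bar E^l(x)\bigr) + \bigl(\bar E^l(x)-\bar G^l(x)\bigr)$ is exactly the paper's decomposition into $\mathcal{M}_{1,l}$ (averaging error, bounded by $C\eta_l\epsilon$) and $\mathcal{M}_{0,l}$ (truncation error of the averaged scheme, bounded by $C\Delta T_l^{p_c+1}\max_{\omega_0\le\omega}\{|\epsilon/\omega|\,\kappa(\epsilon,\eta_l,\omega)\}$), with size bounds feeding $\bar\gamma_l$, Lipschitz-in-the-first-argument bounds feeding $\bar\alpha_l$, and the assembly via Lemma~\ref{lemma:reucrsion_equation_1} and Lemma~\ref{lemma:first_estimate_theorem2} identical to \cref{thm:multi-level_result}. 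The averaging-error and Lipschitz estimates you flag as the main obstacle are precisely what the paper imports from \cite{Peddle_Haut_Wingate_2019} and its appendix lemmas (Lemma~\ref{lemma:Lemma_about_M0}, Lemma~\ref{lemma:Lemma_about_M1}), so your plan matches the published argument in both structure and substance.
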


Remarks: 
\begin{enumerate}
    \item The error constants which arise as numerical time stepping schemes are applied to the differential equations specific to the different levels should be level dependent. For stiff problems the constants are big, for non-stiff problems they are smaller. Consequently, when the averaging windows are chosen appropriately, the constants are roughly of the same order of magnitude for each level.
    \item The structure of the amplification factor is the same as in the case without averaging. However, the terms $\bar \alpha_l$ and $\bar \gamma_l$, which cause the contraction in the error, have an additional term that accounts for the error due to averaging.
\end{enumerate}

\begin{proof}

Again the proof has two building blocks. First, we build upon the two-level proof in \cite{Peddle_Haut_Wingate_2019}, but additionally assume that the fine propagator is not exact. Second, an inductive argument is applied to obtain a multi-level result.

The averaging procedure introduces new errors which must be accounted for in the error analysis. We can start with the same arguments as in \cref{thm:two-level_bound},
however we need new estimates for the lines in the following equation
\begin{align*}
 u(T_n) - U_n^{k+1}
    & = E(u(T_{n-1})) - \bar G^1(u(T_{n-1})) \\
    & \quad + \bar G^1(U_{n-1}^{k}) - E(U_{n-1}^k) \\
    & \quad +\bar G^1(u(T_{n-1}))  -\bar G^1(U_{n-1}^{k+1})\\
    & \quad + E(U_{n-1}^k)- P^0(U_{n-1}^{k}) .
\end{align*}

The estimate for the last line remains the same as in the proof without averaging and the accuracy estimate  of the fine propagator is given by
\begin{equation}
    \|  E(U_{n-1}^k)- P^0(U_{n-1}^{k}) \| \le c \Delta T_1 \Delta T_0^{p_0}.
\end{equation}

The first two lines can be rewritten as
\begin{align*}
 & E(u(T_{n-1})) - \bar G^1(u(T_{n-1})) + \bar G^1(U_{n-1}^{k}) - E(U_{n-1}^k) \\  & \qquad \qquad \qquad \qquad =
 E(u(T_{n-1})) - \bar E^1 (u(T_{n-1}))+ \bar E^1 (u(T_{n-1}))  - \bar G^1(u(T_{n-1})) + \dots \\ 
 & \qquad \qquad \qquad \qquad \quad +  P^1(U_{n-1}^{k}) - \bar E^1 (U_{n-1}^k)+ \bar E^1 (U_{n-1}^k)  - E(U_{n-1}^k)\\
 &  \qquad \qquad \qquad \qquad = \mathcal{M}_{1,1}(u(T_{n-1}), \epsilon, \eta_1) + \mathcal{M}_{0,1}(u(T_{n-1}), \epsilon, \eta_1, \Delta T) \dots \\ 
  & \qquad \qquad \qquad \qquad \quad   - \mathcal{M}_{1,1}(U_{n-1}^k, \epsilon, \eta_1) - \mathcal{M}_{0,1}(U_{n-1}^k, \epsilon, \eta_1, \Delta T) , 
\end{align*}
see the relations \eqref{eq:definitions_of_M0_and_M1} for the definitions of $ \mathcal{M}_{0,1}$ ans $ \mathcal{M}_{1,1}$.
The function $\mathcal{M}_{0,l}(v, \epsilon, \eta_l, \Delta T) $, where the index $l$ refers to the level, has a representation of the form
\begin{equation}
    \mathcal{M}_{0,l}(v, \epsilon, \eta_l, \Delta T) \le C \Delta T^{P_c+1} \max_{\omega_0 \le \omega} \left\{ \left|\frac{\epsilon}{\omega} \right| \kappa(\epsilon,\eta_l, \omega) \right\} \mathcal{\tilde M}_{0}(v) \ ,  
\end{equation}
 see \cite{Peddle_Haut_Wingate_2019}. Moreover, $\mathcal{M}_{0,l}(v, \epsilon, \eta_l, \Delta T)$ is Lipschitz continuous in the first component.

In addition, for the function $\mathcal{M}_{1,l}(v, \epsilon, \eta) $ a relation of the form
\begin{equation}
\label{eq_M_1l_est}
    \mathcal{M}_{1,l}(v, \epsilon, \eta_l) \le C \eta_l \epsilon \mathcal{\tilde M}_{1}(v) \,
\end{equation}
can be found where $\mathcal{M}_{1}(v, \epsilon, \eta) $ is Lipschitz continuous in $v$ , see \cite{Peddle_Haut_Wingate_2019} or lemma \ref{lemma:Lemma_about_M1}.
We can take $ \mathcal{\tilde M}_{1}$ and $ \mathcal{\tilde M}_{0}$ as the maximum of $ \mathcal{\tilde M}_{1,l}$ and $\mathcal{\tilde M}_{0,l}$ over all levels $l$.

We use the subsequent relations which are given in \cite{Peddle_Haut_Wingate_2019}
\begin{align}
    \| \bar G^l(u(T_{n-1})) -\bar G^l(U_{n-1}^{k+1})\| &\le (1+C\Delta T_l) \| u(T_{n-1}) -U_{n-1}^{k+1}\| \\
     \| \mathcal{M}_{1,l}(u(T_{n-1}),\epsilon, \eta_l) - \mathcal{M}_{1,l}(U_{n-1}^k,\epsilon, \eta_l) \| &\le C \eta_l \epsilon \| u(T_{n-1}) -U_{n-1}^{k} \| \\
     \|\mathcal{M}_{0,l}(u(T_{n-1}), \epsilon, \eta_l, \Delta T)  -\mathcal{M}_{0,l}(U_{n-1}^k, \epsilon, \eta_l, \Delta T_l)\| &\le C \Delta T_l^{p+1} \max_{\omega_0 \le \omega} \left\{ \left|\frac{\epsilon}{\omega} \right| \kappa(\epsilon,\eta_l, \omega) \right\}
      \| u(T_{n-1}) -U_{n-1}^{k} \|. \label{eq:Lip_bound_M1}
\end{align}
The first equation is the stability of the propagator. A relation similar to \cref{eq:Lip_bound_M1} can be obtained using the argument applied to the first two lines of the second equation in the proof of \cref{thm:two-level_bound}.  For the estimate of the second equation see lemma \ref{lemma:Lemma_about_M1}.

Thus, the following estimate can be obtained for the two-level case 
\begin{align*}
    \|u(T_n) - U_n^{k+1}\| &\le \ C \eta_1 \epsilon \| u(T_{n-1}) -U_{n-1}^{k} \| \\
    & \quad + C \Delta T_1^{p+1} \max_{\omega_0 \le \omega} \left\{ \left|\frac{\epsilon}{\omega} \right| \kappa(\epsilon,\eta_1, \omega) \right\}
      \| u(T_{n-1}) -U_{n-1}^{k} \| \\
      & \quad + (1+C\Delta T_1) \| u(T_{n-1}) -U_{n-1}^{k+1}\| \\
      & \quad + c \Delta T_1 \Delta T_0^{p_0} \\
      & = \left(C \eta_1 \epsilon +  C \Delta T_1^{p+1} \max_{\omega_0 \le \omega} \left\{ \left|\frac{\epsilon}{\omega} \right| \kappa(\epsilon,\eta_1, \omega) \right\} \right) \| u(T_{n-1}) -U_{n-1}^{k} \| \\& \quad + (1+C\Delta T_1) \| u(T_{n-1}) -U_{n-1}^{k+1}\| \\
      & \quad + c \Delta T_1 \Delta T_0^{p_0} \\
      & = \bar \alpha_1 e_{n-1}^{k} + \beta_1 e_{n-1}^{k+1} +\delta_0
\end{align*}
which gives us a recurrence relation, where $k$ counts the Parareal iterations and $n$ counts the steps through the time grid.

For $k=0$, we have
\begin{align*}
    \|u(T_n) - U_n^{0}\| \le e_n^0 &= \|E(u(T_{n-1})) - \bar G^1(U_{n-1}^0)\| \\
    &\le \|E(u(T_{n-1})) - \bar G^1(u(T_{n-1}))\| +
    \|\bar G^1(u(T_{n-1})) - \bar G^1(U_{n-1}^0)\| \\
    &\le \|E(u(T_{n-1})) - \bar E^1(u(T_{n-1}))\| + \|\bar E^1(u(T_{n-1})) - \bar G^1(u(T_{n-1})) \| \  \\
    & \quad + \|\bar G^1(u(T_{n-1})) - \bar G^1(U_{n-1}^0)\| \\
    &\le C \eta_1 \epsilon \|\mathcal{\tilde M}_1\| + C \Delta T_1^{p_c+1} \max_{\omega_0 \le \omega} \left\{ \left|\frac{\epsilon}{\omega} \right| \kappa(\epsilon,\eta_1, \omega) \right\} \|\mathcal{\tilde M}_0\|  \\
    & \quad + (1+ C \Delta T_1) \|u(T_{n-1}) - U_{n-1}^{0}\| \\ 
    & = \bar \gamma_1 + \beta_1 e_{n-1}^0 \ .
\end{align*}


We have used the following notation further above
\begin{align*}
    \bar \alpha_l &= C \eta_l \epsilon + C \Delta T_l^{p_c+1} \max_{\omega_0 \le \omega} \left\{ \left|\frac{\epsilon}{\omega} \right| \kappa(\epsilon,\eta_l, \omega) \right\}\\
    \beta_l &= 1+C\Delta T_l\\
    \bar \gamma_l &= C \eta_l \epsilon \|\mathcal{\tilde M}_1\| + C \Delta T_l^{p_c+1} \max_{\omega_0 \le \omega} \left\{ \left|\frac{\epsilon}{\omega} \right| \kappa(\epsilon,\eta_l, \omega) \right\} \|\mathcal{\tilde M}_0\|\\
    \delta_0 &= c \Delta T_1 \Delta t^{p_0}
\end{align*}

We could find the same recurrence relation as in \cref{thm:two-level_bound} with mofified $\alpha$ and $\beta$. For a two-level result with non-exact fine propagator lemma \ref{lemma:reucrsion_equation_1} can be applied. The inductive step for the multi-level result can be found in lemma \ref{lemma:first_estimate_theorem2}.

{\color{white}1}
\end{proof}

\section{Complexity}
\label{sec:Efficency}

In this section, we distinguish between the total number of steps done on a level and the serial number of steps. Suppose on the coarsest level we do $X$ steps with the coarse propagator, for example to compute the initial guess. Then the total number of steps on that level is $X$ and the serial number of steps on that level is  $X$, too. Now, we go to the next finer level and the coarsening factor is $N$. We then have  $NX$ as the total number of steps and  $N$ as the number of serial steps (done on  $X$ grids in parallel). Refining again leads to $N^2X$ as the total number of steps and  $N$ serial steps (done on  $NX$ grids in parallel) and so on.

We define the complexity of an algorithm as the number of serial steps.
The complexity of the 3-level Parareal algorithm can be defined as
\begin{equation}
   C^3 = k_2 \left( N_2 + k_1 \left( N_1+N_0 \right) +N_1 \right) + N_2 ,
\end{equation}
where $N_i$ is the number of serial steps on level $i$ and $k_i$ is the number of iterations on level $i$. For a V-cycle we have $k_i=1$.
We may hypothesize that increasing the number of levels pays off when we have a broad range of scales or strong scale separation. 
 We investigate the computational complexity of a V-cycle in more detail.
Suppose $\tilde X$ is total number of fine steps on the finest level that must be done and $N$ is the coarsening factor which relates the time-steps of the different levels, i.e $\tilde X$ would be the number of serial steps that must be done when a non-parallelizable basic ODE solver is applied. (We might assume that two neighboring levels are related by the different coarsening factors. To keep the computation simple we will not do that in this place and leave it for future work.) We assume that we do $N$ serial steps on all level except the coarsest level. 
For the complexity of a V-cycle, we have
\begin{align}
    \text{1 level: } f_1(N) &= \tilde X \text{ serial steps} \\
    \text{2 levels: } f_2(N) &= N+2 \tilde  X/N \text{ serial steps} \\
    \text{3 levels: } f_3(N) &= N+2N+2 \tilde  X/N^2 = 3N+ 2 \tilde X/N^2 \text{ serial steps} \\
    \text{L levels: } f_L(N) &= 2(L-2)N+N+ 2 \tilde  X/N^{L-1} \text{ serial steps} . 
\end{align}

It is possible to find the coarsening factor $N$ which minimizes the number of serial steps, depending on the number of levels. Solving $f_L'(N) = 0$, we find

\begin{equation}
    N_{\mathrm{opt}} = (\tilde X + \tilde X/(2L-3))^{1/L} .
\end{equation}
However, when we choose a coarsening factor $N$ which is computed in the described way, we adapt the algorithm on the behavior of the model on the finest scale only. It may be necessary to account for the behavior of the model on other scales too, see the example in \cref{subsec:Swinging_spring}.

For a V-cycle, the total number of evaluations of the right-hand side for all the levels is not that much bigger than that of a serial time-stepper.
On the finest level we do the same number of   evaluations of the right-hand side that we would do with a serial time-stepping scheme, provided we apply the Multi-level Parareal algorithm with averaging. When we do in total $\tilde X$  evaluations of the right-hand side on the finest level, level 0, we do $2 \tilde X/N$  evaluations of the right-hand side  on level 1,  $2 \tilde X/N^2$  evaluations of the right-hand side on level 2 and so on. That means that the number of  evaluations of the right-hand side decreases exponentially with the levels.  Considering the  Multi-level Parareal algorithm without averaging we can possibly not coarsen in the same way because the time-step might become too large on the coarser levels and as a result the numerical solver might be unstable for the oscillatory problems.

\section{Numerical examples}
\label{sec:Numerical_examples}

For the numerical examples the explicit midpoint rule (RK2) is used as a basic integrator unless stated differently. The results presented in \cref{subsec:System_with_three_scales}, \cref{subsec:Swinging_spring} and \cref{subsec:RSWE}  are computed solving the modulation equations of the systems. Implementations of the examples can be found in \cite{juliane_rosemeier_2022_7382198}.

\subsection{1D single scale example}
\label{subsec:1D_singe_scale_example}

Let us consider the problem
\begin{equation}
\label{eq:non_oscillatory_problem}
    \frac{d x}{dt} = -x, \qquad x(0) = 1, \qquad t \in [0,2]
\end{equation}
which is to be solved with the Multi-level Parareal algorithm without averaging.
We compute the error for one V-cycle and increase the number of levels. The time-step on the coarsest level is for all the computations in  \cref{tab:error_coarse_dt_const_} constant $\Delta T = 0.25$. We choose a coarsening factor of $N=10$. This means: When we have a time-step $\Delta t$ on level $l$, the time-step on level $l-1$ is $\Delta t/10$.
        
\begin{table}[]
\centering
\begin{tabular}{ c| c }
number of levels & error  \\ \hline
2 & 1.2566212807763046e-05 \\
3 & 1.9562958164422008e-05 \\
4 &1.9807099440426344e-05\\
5 & 1.9809587023590493e-05\\
6 & 1.9809615854133382e-05\\
7 & 1.9809616125891306e-05\\
8 & 1.980961620086837e-05\\
\end{tabular}
\caption{Error of the Multi-level Parareal algorithm without averaging for a varying number of levels applied to problem \eqref{eq:non_oscillatory_problem} where the step size on the coarsest level is 0.25 . We see the relative error in the discrete l1 norm.}
\label{tab:error_coarse_dt_const_}
\end{table}

We see in \cref{tab:error_coarse_dt_const_} that increasing the number of levels while keeping the time-step on the coarsest level constant barely changes the error of the approximations of problem \cref{eq:non_oscillatory_problem}.

This example leads to the following observation. The Multi-level Parareal algorithm with $L$ levels (without averaging) is a two-level Parareal algorithm where the fine propagator is a Parareal algorithm with $L-1$ levels. Therefore, when the dominant component of the error emerges on the coarsest level, we cannot not expect that the accuracy of the method increases when we only add additional levels and keep the time-step on the coarsest level constant.

\subsection{1D example with fast oscillations}
\label{sucsec:1D_example_with_fast_oscillations}

Let us now suppose the following situation. We have to solve a multi-scale problem with very fast oscillations on the finest scale. On the finest level we resolve the finest scale. Therefore the time-step must be chosen small enough to resolve the very fast oscillations. This is independent of the total number of  levels that the method has. In particular, the fastest scale dictates the time-step on level 0. 

An example for such a problem is given by
\begin{equation}
\label{eq:test_problem_eff}
    \frac{d w}{dt} = - \exp(irt)w^2
\end{equation}
for large values of $r$. This equation can be interpreted as a one dimensional modulation equation. In the following we will consider $r=100$, $r=1000$ and $r=10000$. The exact solution of the problem is given by
\begin{equation}
    w(t) = \frac{r w_0}{-iw_0 \exp(irt) + iw_0 +r} .
\end{equation}
The averaged problem obeys the following relation
\begin{equation}
    \frac{d \bar w}{dt} = (-1)\ \underbrace{\frac{1}{\eta_l} \int_{-\eta_l/2}^{\eta_l/2} \ \rho \left( \frac{s}{\eta_l} \right) \
\exp(irs) \ ds}_{\text{damping factor}} \ \exp(irt) \ \bar w(t)^2 .
\end{equation}

The problem \eqref{eq:test_problem_eff} will be solved with Multi-level Parareal methods including averaging.
Numerical studies show that for $r = 100$ the fastest period in the solution is of order $0.1$. Therefore we choose averaging windows 
$\eta_1 = 0.2 \text{ and } \eta_2 = 2$. In addition, for $r = 1000$ the fastest period in the solution is of order $0.01$. Thus the averaging windows 
$\eta_1 = 0.02, \eta_2 = 0.2 \text{ and } \eta_3 = 2 $ are a reasonable choice.
For the case $r = 10000$ the fastest period in the solution is of order $0.001$. Therefore, the computations are done with the averaging windows 
$\eta_1 = 0.002, \eta_2 = 0.02, \eta_3 = 0.2 \text{ and } \eta_4 = 2 $.  The length of the solution interval is always 1 and the coarsening factor is always 10. To solve problem \cref{eq:test_problem_eff} a V-cycle is applied. Depending on the value for $r$ we choose different step size on the finest level 
\begin{itemize}
    \item r = 100:  The step size on the finest level is $\Delta T_0 = 10^{-3}$
    \item r = 1000: The  step size on the finest level is $\Delta T_0 = 10^{-4}$
    \item r = 10000: The  step size on the finest level is $\Delta T_0 = 2.5 \cdot 10^{-5}$
\end{itemize}

\begin{table}[]
\centering
\begin{tabular}{ c | c| c| c }
 number of levels & $r=100$ & $r=1000$ &  $r=10000$ \\ \hline
 2 & 0.0002169750591733674 & 2.1847140061040485e-06 &  3.0668862104273734e-07 \\
 3 & 0.00019811199764541986 & 2.106413305540747e-06 & 3.0480547757705495e-07  \\
 4 & & 2.251750942815333e-06 & 3.0357016877934065e-07\\
 5 & & & 2.7011063136189545e-07\\
\end{tabular}
\caption{Errors at time $t=1$ for a V-cycle including averaging with a varying number of levels applied to problem \eqref{eq:test_problem_eff} with $r=100; 1000 \text{ and }10000$. The time-step on the finest level is $\Delta t = 1 \cdot 10^{3}, 1 \cdot 10^{-4} \text{ and } 2.5 \cdot 10^{-5}$. The coarsening factor to relate the levels is $N=10$.}
\label{tab:error_coarse_dt_const_5}
\end{table}

In  
\cref{tab:error_coarse_dt_const_5} we find the errors at time $t=1$. The time-step on the finest level was adapted to the fast oscillations in the problem which depend on the parameter $r$. Fixing one value for $r$, we see that the errors  hardly change in the number of levels. We can draw the following conclusions. First, for the parameters chosen for the model runs the increased parallelism of the methods with 3, 4 or 5 levels should lead to a better efficiency. Second, having a third, fourth or fifth level does not change the error significantly. This might indicate that the error emerges on the finest level, where the fast oscillations are resolved and computed numerically.

This means when we want to design more efficient algorithms with multi-level methods, we have to exploit the increased parallelism when we have multiple levels. A gain in efficiency should come from a combination of increased parallelism and larger time-steps on the coarser levels. For oscillatory problems larger times steps can lead to unstable behavior of the algorithm. This can be circumvented when the averaging is applied. Doing several Parareal iterations has an effect on the efficiency, too. 

\subsection{System with three scales}
\label{subsec:System_with_three_scales}

 We consider a simple model problem with a quadratic non-linearity which is inspired by an equation used to explain issues  due to bi-linear terms arising
    in fluid modelling, see \cite{Vallis_2017},
      \begin{equation}
        \begin{pmatrix} \dot u_1 \\ \dot u_2
          \\ \dot u_3 \end{pmatrix} +
        \begin{pmatrix} i \omega_1 &0& 0 \\0 & i \omega_2 &  0 \\0& 0 &
          i \omega_3 \end{pmatrix}
        \begin{pmatrix} u_1\\ u_2 \\ u_3 \end{pmatrix} +
        \begin{pmatrix} u_1u_1\\ u_2u_2 \\
            u_3u_3 \end{pmatrix} = 0 .
            \label{eq:three_timescales}
      \end{equation}
      We write $\dot u_1$ instead of $\frac{du_1}{dt}$.
     For strongly differing $\omega_i$ , $i \in \{1,2,3\}$, the system has three well-separated time scales. Example \eqref{eq:three_timescales} is very convenient for numerical studies because the exact solution of the system  and the averaged equations can be derived. 
     
     The system is decoupled which would in principle allow us to treat the equations separately. However, we want to study the effect of scale separation in a problem and in general multi-scale systems it can be the case that one equation sees the effect of several scales, see for instance the example in \cref{subsec:Swinging_spring}. 
     Therefore, when solving the system, we neglect the decoupling. Especially, we choose the same time-steps and averaging windows for all the equations.

\begin{figure}
     \centering
         \centering
         \includegraphics[width=1.\textwidth]{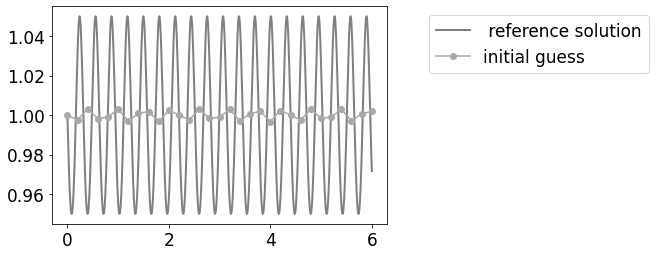}
         \caption{Illustration of the second component. The initial guess has strongly damped oscillations. Therefore the fine propagator gets almost the same values for each interval.}
         \label{fig:secondComp_InitGuess}
\end{figure}

\begin{figure}
         \centering
         \includegraphics[width=1.\textwidth]{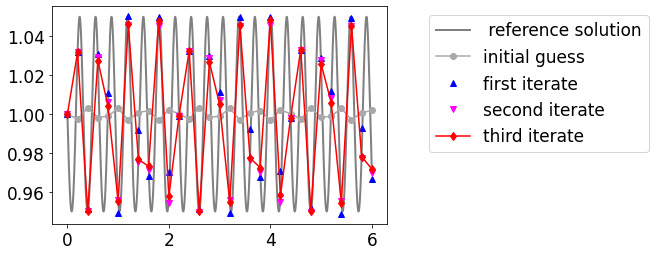}
         \caption{Illustration of the second component. The solutions obtained with the Parareal iterations reproduce the phase correctly, because the information about the phase is not in the initial condition but in the exponential, explicitely in the time coordinate $t$.}
         \label{fig:secondComp_ThirdIt}
\end{figure}

In \cref{sec:introduction}, we have already mentioned  that when working with the modulation equation, the information about the phase is not in the initial data but in the exponential which is part of the right-hand side of the system. This is now illustrated in \cref{fig:secondComp_InitGuess} and \cref{fig:secondComp_ThirdIt}. The figures show solutions of the second component of the system. In \cref{fig:secondComp_InitGuess} we see the strongly damped initial guess provided by the coarse propagator. This means that the fine propagator computes fine solutions with almost the same initial value on each small interval. Moreover, we see that due to the oscillations the exact solution, denoted as the reference solution in the figure, can deviate strongly from the damped initial guess. As \cref{fig:secondComp_ThirdIt} shows, the Parareal method with averaging computes the phase correctly.  But as the coarse propagator only provides strongly damped solutions, it must be the fine propagator which gives the strong variation from the damped solution in the Parareal iteration.
Therefore, as the fine propagator uses almost the same initial value on each small interval in the first iteration, the information about the phase must be in the right-hand side of the system and cannot be in the initial data.

\begin{figure}
    \centering
    \includegraphics[width=1.\textwidth]{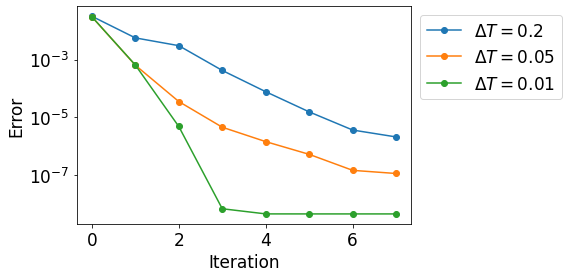}
    \caption{Errors of a three-level Parareal algorithm with averging for several iterations on the coarsest level, level 2. The time step on the coarsest level is denoted as $\Delta T$. We see the eerors of the second component of the system \eqref{eq:three_timescales}. }
    \label{fig:eq:three_timescales_error}
\end{figure}


\cref{fig:eq:three_timescales_error} shows a study of the numerical errors made with a 3-level Parareal algorithm with averaging. The parameters in problem \eqref{eq:three_timescales} are $\omega_1 =2, \omega_2 =20 \text{ and } \omega_3 =200$ and the solution is computed on the time interval [0,6]. The following  
averaging windows are chosen $\eta_1 = 0.1 \text{ and } \eta_2 = 1$.  In addition, a coarsening factor $N=10$ relates the different levels. We always have $k_1=3$ iteration on level 1. In \cref{fig:eq:three_timescales_error}, the error is computed at time $t=6$ for the second component.  We can see that the accuracy of the solution increases with the number of iterations on level 2 for different coarse time-steps on level 2. After several iterations the accuracy of the method does not increase anymore. The Multi-level Parareal method converges to the fine solution. In the case of the 3-level method with averaging the fine solution is the 2-level method with averaging.


\subsection{Swinging spring (Elastic Pendulum)}
\label{subsec:Swinging_spring}
 In this subsection we do a first test on a  fluid-related, more complicated problem, the swinging spring, also denoted as the elastic pendulum. The relation of the system to geophysical flows was outlined for instance in \cite{Lynch_2003}. A perturbation analysis for  the barotropic potential vorticity equation, a comlplex partial differential equation, leads to the three-wave equations. These equations also govern the dynamics of the swinging spring. Further investigations on the dynamics of the system can be found for example in \cite{Holm_Lynch_2002}.
 The swinging spring can be rewritten as a first order system and is given by
\begin{equation}
 \frac{d \bf w}{dt} =
\begin{pmatrix} \dot x_1 \\ \dot x_2 \\\dot y_1 \\ \dot y_2\\ \dot z_1 \\ \dot z_2 \end{pmatrix} =
\begin{pmatrix} 0 & 1& 0 & 0 & 0& 0 \\
- \omega_R^2 & 0& 0 & 0 & 0& 0 \\
0 & 0& 0 & 1 & 0& 0 \\
0 & 0& - \omega_R^2 & 0&0 &  0 \\
 0 & 0& 0 & 0&0 &  1 \\
0 & 0& 0 & 0 & - \omega_Z^2& 0 \\
\end{pmatrix} 
\begin{pmatrix} x_1 \\  x_2 \\ y_1 \\  y_2\\ z_1 \\ z_2 \end{pmatrix} +
\begin{pmatrix} 0 \\  \lambda x_1 z_1 \\ 0 \\  \lambda y_1 z_1\\ 0 \\ \frac{1}{2} \lambda (x_1^2 + y_1^2) \end{pmatrix}
 .
\end{equation}

Multi-level Parareal algorithms with averaging are applied to solve the system.
We compare a three-level method with a two-level method and investigate the errors at time $T_{max} =50$ for the first component of the system. For the two-level method we choose time-steps $\Delta T_1 = 5$ and $\Delta T_0 = 0.05$. The time-step $\Delta T_0$ is small enough to resolve the fast oscillations in the problem. With the time-step $\Delta T_1$ the coarse dynamics is resolved. Additionally, the averaging window is $\eta = 2$, which guarantees that the fast oscillations are averaged on the coarse level, level 1. 

When applying the three-level method, two iterations are done on the intermediate level, level 1. The number of iterations on the coarse level, level 2 is varied in the test. Moreover, the coarse and fine time-steps are the same as in the two-level case. Especially, we have $\Delta T_2 = 5$ and $\Delta T_0 = 0.05$. But we introduce an intermediate level and increase the parallelism. The time-step of the intermediate level, level 1, is $\Delta T_1 = 0.5$. In particular, the coarsening factor is $N=10$. For the coarsest level the averaging window is chosen as $\eta_2 = 2$. We test different averaging windows on the intermediate level, namely $\eta_1 = 0.2, 0.75 \text{ and } 2$. When we choose $\eta_1 = 0.2$ we relate the averaging windows on the levels by the same coarsening factor that relates the time-steps.  The choice $\eta_1 = 2$ is inspired by the dynamics of the system, because this window ensures that we average the fast oscillations also on level 1. The value $\eta_1 = 0.75$ is an intermediate choice to see how the error develops for changing averaging windows.

The reference solution is computed with the RK2 method with a time-step of $\Delta t = 0.001$. This is more accurate than the fine solver of the two-level method. The errors for the two- and three-level methods can be found in \cref{tab:error_coarse_dt_const_6}. We may observe that with an increasing number iterations on the coarse level the accuracy of both the two- and three-level methods increases until the methods are converged. How good the approximations of the three-level methods are strongly depends on the choice of the averaging window on the intermediate level, $\eta_1$. For $\eta_1 =2$ the convergence for the three-level method is almost the same as for the two-level method, whereas the accuracy becomes worse for smaller intermediate averaging windows. Consequently for the case $\eta_1 =2$ we can benefit from more parallelism of the three-level method. \cref{tab:error_coarse_dt_const_7} shows the number of serial steps done for the two- and three-level methods depending on the iterations. Especially, we see that we do less serial steps when the three-level method is applied.


\begin{table}[]
\centering
\begin{tabular}{ c | c | c| c| c }
 iteration & $\eta_1 = 0.2$ & $\eta_1 = 0.75$ & $\eta_1 = 2$ & 2-level method  \\ \hline
 1 & 0.000596698264213838 & 0.000708281803419343 & 0.0007175950573062584 & 0.0007175766438230063 \\
 2 & 0.00014503391511486857 & 4.627682726392884e-06 & 7.244961778282016e-06 & 7.2234487876014775e-06 \\
 3 & 0.00016846111177755765& 3.6002717718981725e-05 & 2.4794370887603473e-05 & 2.4824342166950703e-05 \\
 4 & 0.00015175374381607917& 1.977436923063236e-05 & 8.614816726031094e-06 & 8.641258080730602e-06 \\
 5 & 0.00015190431144367772& 1.980929225765815e-05 & 8.639280904016583e-06 & 8.665761458456767e-06 \\
\end{tabular}
\caption{Errors of the first component at time $T_{\mathrm{max}}=50$.}
\label{tab:error_coarse_dt_const_6}
\end{table}

\begin{table}[]
\centering
\begin{tabular}{ c | c | c }
 iteration & 2-level method & 3-level method \\  \hline
 1 & 120 & 70 \\
 2 & 230 & 130 \\
 3 & 340 & 190 \\
 4 & 450 & 250 \\
 5 & 560 & 310 \\
\end{tabular}
\caption{Computational complexity. Number of serial steps for a two- and three-level method. For the three-level method the number of iterations on level 1 is $k_1=2$. Varying number of iterations on the coarsest level for both the two- and three-level method.}
\label{tab:error_coarse_dt_const_7}
\end{table}


  


  



\subsection{1D Rotating Shallow Water Equations (RSWE)}
\label{subsec:RSWE}
 
  Here solutions to the RSWE given by the equations
  \begin{align}
      \frac{dv_1}{dt} + \frac{1}{\epsilon} \left( -v_2+ F^{-1/2}
      \frac{dh}{dx}\right) + v_1 \frac{dv_1}{dx} &=\mu \partial_x^4 v_1 \\
       \frac{dv_2}{dt} + \frac{1}{\epsilon} v_1 + v_1 \frac{dv_2}{dx} &= \mu \partial_x^4 v_2
      \\
       \frac{dh}{dt} + \frac{ F^{-1/2}}{\epsilon} \frac{dv_1}{dx} +
      \frac{\partial}{\partial x} (hv_1) &= \mu \partial_x^4 h
  \end{align}
 are computed. This is the same example investigated in \cite{Haut_Wingate_2014} where convergence of the two-level method with averaging is shown. A similar notation is used here. With $h(x,t)$ we denote the surface height and $v1(x,t), v2(x,t)$ are the horizontal velocity.  
 Moreover, a hyperviscosity term with diffusion coefficient $\mu = 10^{-4}$ is used.   Additionally, we impose periodic boundary conditions and use the following initial data
 \begin{equation}
 \begin{split}
 & v1=0, \qquad v2=0 \\
 & h = c_1 \left( e^{(-4(x-\pi/4)^2)} \sin(3(x-\pi/2)) + e^{(-2(x-\pi)^2)} \sin(8(x-\pi)) \right) + c_0, 
 \end{split}
  \end{equation}
 where the constants $c_1 \text{ and } c_2$ are chosen such such that
 \begin{equation}
     \int_0^{2\pi} h(x,0) dx = 0, \qquad \max_x | h(x,0) | = 1 .
 \end{equation}
 
 To solve the RSWE numerically a  pseudo-spectral method with 128 spatial Fourier modes is be applied. For the time-stepping we use a three-level Parareal scheme with averaging and with a second order Strang splitting method as basic integrators. The numerical time-stepping method is tested for two different parameter regimes. The Rossby number $\epsilon$ is chosen 0.1 in both cases. The Froude number is given by the relation $F^{1/2} \epsilon$. In the first test   the Burger number $F$ is chosen to be 1. This test is done with coarsening factors $N=10,20,40$ and an interval length 48. In the second test the Burger number $F$ is chosen to be 1/100. The coarsening factors are $N=10,20,30$ and the interval length is 45. For both tests, the time-step on the finest level is 1/2000. The time-steps on level 1 and 2 are determined by the fine time-step and the coarsening factor. The averaging windows on the levels 1 and 2 are chosen to be equal to the time-steps on the levels 1 and 2.  In \cref{fig:3level_error_quasigeostophic} and \cref{fig:3level_error_incompressible} the  errors in the relative $L^{\infty}$ norm, the same norm used for the errors in \cite{Haut_Wingate_2014}, are shown. We  can see that the numerical method converges with  increasing number iterations. For the case $F=1/100$ fewer iterations are needed for convergence compared to the case $F=1$ .

 \begin{figure}
     \centering
     \includegraphics[scale=0.8]{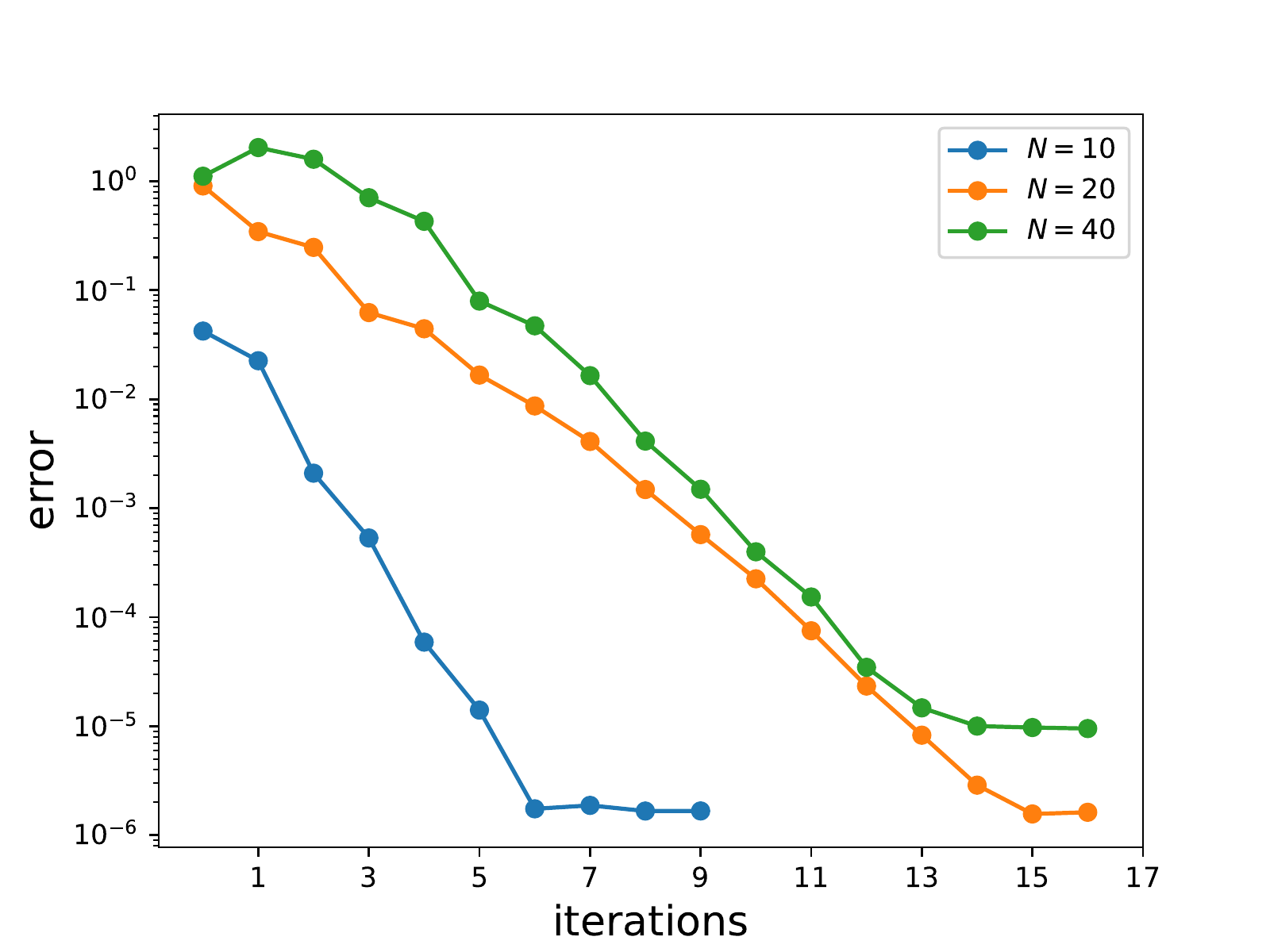}
     \caption{Error of the numerical solution to the RSWE with a three-level Parareal method with averaging, parameter choice: $\epsilon = 0.1, F=1$}
     \label{fig:3level_error_quasigeostophic}
 \end{figure}
 
 \begin{figure}
     \centering
     \includegraphics[scale=0.8]{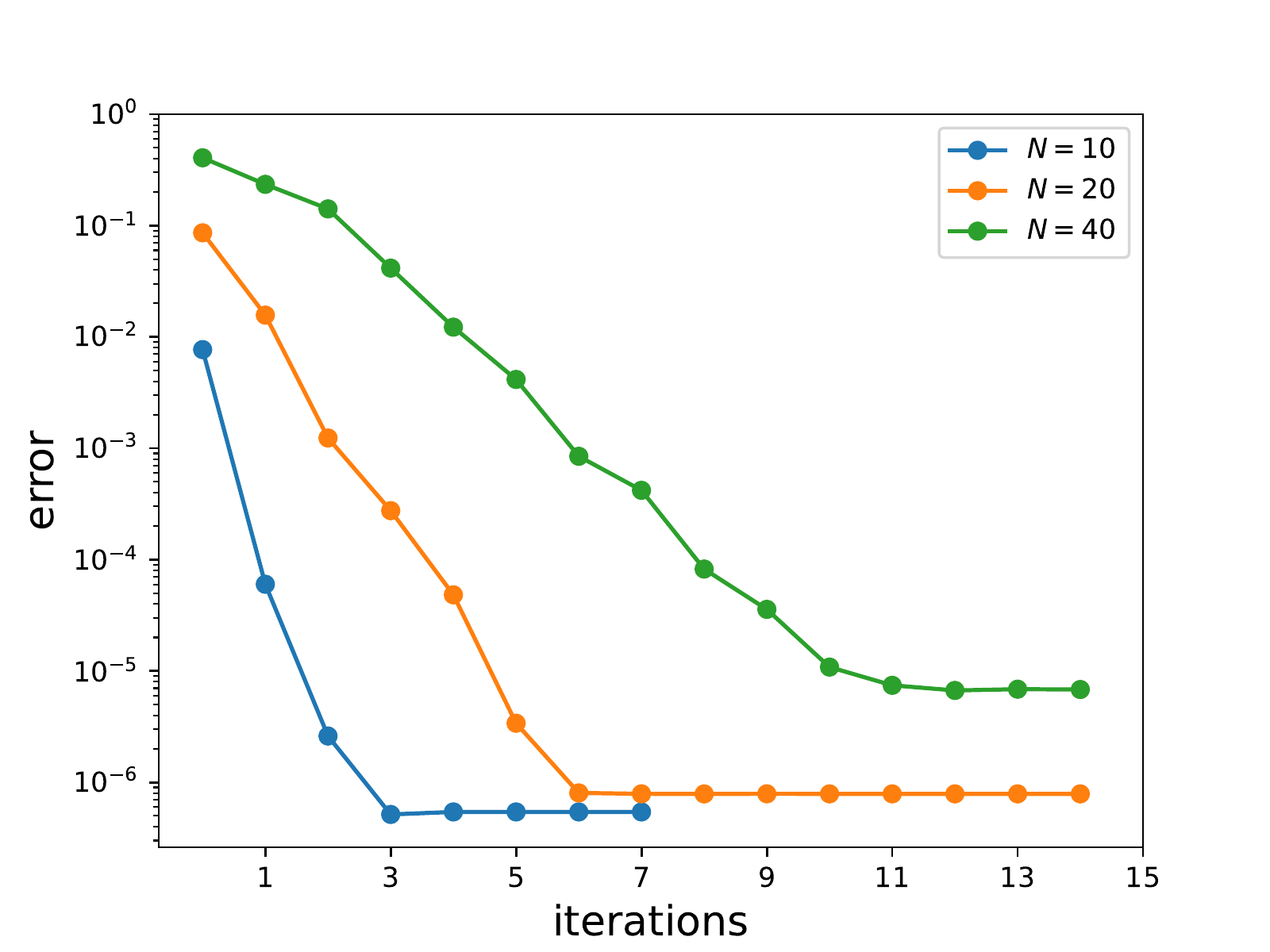}
     \caption{Error of the numerical solution to the RSWE with a three-level Parareal method with averaging, parameter choice: $\epsilon = 0.1, F=1/100$}
     \label{fig:3level_error_incompressible}
 \end{figure}
 
Suppose we are given a two-level method to solve the case with $F=1$. The two-level method uses a coarsening factor $N=40$ and two correction iterations are done. The solution interval in time is [0,48]. Then we want to answer the question if it is possible to design a three-level method which is more efficient than the two-level method. The two-level method gives an error of 1.7963565539455182e-05, where 7280 serial steps are done. We compare it to the three-level method with a coarsening factor of $N=20$. With $k_2$ we denote the number of iterations on the coarse level and we do 3 iterations on the intermediate level. Then the three-level method converges to a two-level method with three iterations, which guarantees that the three-level method converges to a two-level method, which is more exact than the  two-level method with two iterations. When $k<28$, we do less serial steps with the three-level method than with two-level method with two iterations. With the three-level method we reach the accuracy of the two-level method with two iterations after 13 iterations already. Therefore, we can find a three-level method which is more efficient than two-level method with two iterations.

 Now we consider the case $N=1/100$. We choose a two-level method with a coarsening factor of $N=60$ and do two correction iterations. Thus, 4560 serial steps must be done. We compare this method to the three-level method with a coarsening factor of $N=30$ and $k_1=3$ iterations on level 1. Thus, depending on the number of iterations  $k_2$ we do $310k_2+100$ serial steps with three-level method. If we do not more that 13 iterations with the three-level method, the computational complexity of the three-level method is less than the computational complexity of the two-level method. The three-level method is more accurate than the two-level method after 8 iterations already, as the two-level method gives an error of 0.00019024269235007262.

\section{Discussion and conclusion}
\label{sec:Discussion_conclusion}

In the present study, a  Multi-level Parareal method with and without  averaging   is proposed and investigated, with a special focus on oscillatory problems.  
The averaging plays a central role when oscillatory problems shall be solved because it mitigates the oscillatory stiffness. Therefore big time-steps can be used on the coarse levels, which is  demonstrated in the numerical examples. For future studies  a change of the integration kernels might be of interest to adapt the method to dissipative problems. 

One important result is the derivation of  error bounds for the method building on results in the literature and underpinning theoretically the convergence of the method. In particular, the basic time-stepping methods applied on the levels cause an error, denoted as $E_l$ or $\delta_0$. The errors are amplified as a solution from one level is passed to coarser grids. In the error bound the amplification is given by the amplification factors $A_j$. Increasing the number of iterations on a level $l$ changes  $E_l$. Additionally, the computational complexity of the Multi-level Parareal method is discussed in one of the sections. 

Finally, several numerical examples are studied. The investigation of the numerical examples includes a discussion when multiple levels can be more efficient than the Parareal method with only two levels. A gain of efficiency can be difficult to accomplish if the algorithms without averaging are used, see the example in \cref{subsec:1D_singe_scale_example}. However, combined with averaging the multi-level approach seems promising for oscillatory problems, as it allows us to take big time-steps on the coarse levels and we do not have to satisfy severe stability constraints imposed by fast oscillations, see the other examples. The example in \cref{subsec:Swinging_spring} also shows that a good choice of the averaging windows is crucial for the accuracy. Moreover, the computations in \cref{subsec:RSWE} show that multi-level methods can be more efficient than a given two-level method.

The examinations on the example in \cref{subsec:Swinging_spring} also inspire an interesting variant of the Multi-level Parareal method. When introducing an intermediate level, we might use a constant function as initial guess on the grids of level 1. In particular, the constant functions on level 1 would admit the initial values provided by level 2. Possibly, this would not increase the error significantly, because the level 1 solutions only have to capture the coarse dynamics which is already resolved by the solution from level 2. Additionally, we would save the serial steps needed to compute the initial guess on level 1. The investigations concerning computational complexity and efficiency are not exhaustive and shall be continued in the future on more complex examples including many scales or a continuous range of scales.

\appendix

\section{Details of the convergence proofs}

\subsection{Non-exact fine propagator}

\begin{lemma}
\label{lemma:reucrsion_equation_1}
Suppose we are given the following recurrence relation
\begin{equation*}
    e_n^{k+1} = \alpha e_{n-1}^k + \beta e_{n-1}^{k+1} + \delta_n, \qquad \qquad
    e_n^0 = \gamma + \beta e_{n-1}^0 ,
\end{equation*}
with $\delta_0 = 0$ and $\delta_n = \delta$ for $n \ge 1$.
Then the $e_n^{k}$ can be written in non-recursive form as
\begin{equation}
     e_n^k \le {n \choose k+1}    \gamma \alpha^k  \beta^{n-k-1} + n \delta \beta^{n-1} (1+\alpha_{0n})^{n-1}
\end{equation}
for $n>k$.
\end{lemma}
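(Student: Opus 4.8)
The plan is to exploit the linearity of the recurrence and split the driving data into its two independent sources, treating the contribution of the initial-guess error $\gamma$ and that of the fine-propagator error $\delta$ separately. Concretely, I would write $e_n^k = P_n^k + Q_n^k$, where $P$ solves the recurrence driven only by $\gamma$ (so $P_n^0 = \gamma + \beta P_{n-1}^0$ and $P_n^{k+1} = \alpha P_{n-1}^k + \beta P_{n-1}^{k+1}$) and $Q$ solves the recurrence driven only by $\delta$ (so $Q_n^0 = \beta Q_{n-1}^0$ and $Q_n^{k+1} = \alpha Q_{n-1}^k + \beta Q_{n-1}^{k+1} + \delta_n$), both with zero data at $n=0$. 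Since the recurrence is linear with nonnegative coefficients and its solution is uniquely determined by forward substitution, $P+Q$ reproduces $e$ exactly, so it suffices to bound $P$ and $Q$ individually. Throughout I would use $\beta = 1 + C_2\Delta T_1 \ge 1$, which supplies all the slack in the estimates.

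For the $\gamma$-part I would recover the classical Gander--Hairer bound $P_n^k \le \binom{n}{k+1}\gamma\alpha^k\beta^{n-k-1}$ by induction on $n$ using Pascal's rule. The base row $k=0$ is the geometric sum $P_n^0 = \gamma\sum_{i=0}^{n-1}\beta^i \le n\gamma\beta^{n-1} = \binom{n}{1}\gamma\beta^{n-1}$, and $P_0^k = 0$, consistent with $\binom{0}{k+1}=0$. The inductive step is immediate: substituting the two hypotheses into $P_n^{k+1} = \alpha P_{n-1}^k + \beta P_{n-1}^{k+1}$ factors out $\gamma\alpha^{k+1}\beta^{n-k-2}$ times $\binom{n-1}{k+1} + \binom{n-1}{k+2} = \binom{n}{k+2}$, which is exactly the claimed bound at $(n,k+1)$.

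The main work is the $\delta$-part $Q$, and this is where I expect the \textbf{principal obstacle}: the source $\delta$ is re-injected at every node of iteration index $\ge 1$, so its contribution accumulates along both the step index and the iteration index. I would first make this accumulation explicit by unrolling the recurrence (equivalently, by summing weighted lattice paths: a source at $(m,j)$ with $m,j\ge 1$ reaches $(n,k)$ through $\binom{n-m}{k-j}$ monotone paths of weight $\alpha^{k-j}\beta^{(n-m)-(k-j)}$), obtaining $Q_n^k = \delta\sum_{p=0}^{n-1}\sum_{q=0}^{\min(k-1,p)}\binom{p}{q}\alpha^q\beta^{p-q}$. Since $p \le n-1$ and $\beta\ge 1$, each term is bounded by $\binom{n-1}{q}\alpha^q\beta^{n-1-q}$; bounding the inner sum by $\sum_{q=0}^{k-1}$ and counting the $n$ values of $p$ yields $Q_n^k \le n\delta\sum_{q=0}^{k-1}\binom{n-1}{q}\alpha^q\beta^{n-1-q}$.

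It then remains to recast this partial binomial sum in the stated form. Factoring out $\beta^{n-1}$ gives $Q_n^k \le n\delta\beta^{n-1}\,T$ with $T = \sum_{q=0}^{k-1}\binom{n-1}{q}(\alpha/\beta)^q$, and clearly $1 \le T \le (1+\alpha/\beta)^{n-1}$. Because $x \mapsto (1+x)^{n-1}$ is continuous and increasing, the intermediate value theorem furnishes $\alpha_{0n} \in [0,\alpha/\beta] \subseteq [0,\alpha]$ with $(1+\alpha_{0n})^{n-1} = T$, hence $Q_n^k \le n\delta\beta^{n-1}(1+\alpha_{0n})^{n-1}$. Adding the two bounds gives the claim for $n > k$; the restriction $n>k$ is exactly what keeps the binomial coefficient $\binom{n}{k+1}$ and the exponent $n-k-1$ meaningful.
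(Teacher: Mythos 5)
Your proof is correct, and it takes a genuinely different route from the paper's. The paper follows Gander--Hairer and works with generating functions: it sets $\rho_k=\sum_{n\ge1}e_n^k\zeta^n$, reduces the two-index recurrence to the one-index recursion $\rho_{k+1}=\frac{\alpha\zeta}{1-\beta\zeta}\rho_k+\frac{\delta}{1-\beta\zeta}\frac{\zeta}{1-\zeta}$, solves that in closed form, and then extracts coefficient bounds by expanding the resulting rational functions with the binomial series (two auxiliary lemmas), ending with a mean-value argument on $(1+\alpha)^n-1$ to produce $\alpha_{0n}\in[0,\alpha]$. You instead use superposition plus a direct combinatorial solve: the split $e=P+Q$, a Pascal-rule induction for the $\gamma$-part, and an exact lattice-path (discrete Duhamel) representation for the $\delta$-part, with the intermediate value theorem defining $\alpha_{0n}$. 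The two arguments meet in the middle: after your weakening $\beta^{p-q}\le\beta^{n-1-q}$, the hockey-stick identity $\sum_{p=q}^{n-1}\binom{p}{q}=\binom{n}{q+1}$ would turn your path sum into exactly the coefficient $\delta\sum_{m}\binom{n}{m}\beta^{n-m}\alpha^{m-1}$ that the paper reads off from its expansion; your cruder choice $\binom{p}{q}\le\binom{n-1}{q}$ with prefactor $n$ is absorbed into the same final expression, so nothing is lost. What your route buys: it is elementary, stays with exact identities until the final step, and sidesteps the formal-power-series technicalities the paper must handle (the restrictions $\beta\zeta<1$ and $\alpha\zeta/(1-\beta\zeta)<1$, and the closing remark on unconditional convergence needed to justify reordering the double series). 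What the paper's route buys: it runs structurally parallel to the original two-level Gander--Hairer proof being generalized, so the new $\delta$-source appears as a single added term in the generating-function identity. One cosmetic caveat in your write-up: the IVT step presupposes $k\ge1$ (for $k=0$ your $T$ is an empty sum), but there the $\delta$-part vanishes identically and the bound is trivial, so correctness is unaffected.
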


\begin{proof}
The recurrence relation has two indices $k$ and $n$. To eliminate the index $n$ we introduce the generating function $\rho_k$ defined as
\begin{equation*}
    \rho_k = \sum_{n \ge 1} e_n^k \zeta^n,
\end{equation*}
see for example \cite{Wilf_1990}. Now a recurrence relation for the $\rho_k$ can be derived. We start with $k=0$.
\begin{align*}
    &\rho_0 = \beta \sum_{n \ge 1} e_{n-1}^0 \zeta^n + \gamma \sum_{n\ge 1} \zeta^n
    = \beta \zeta \rho_0 + \gamma \frac{\zeta}{1- \zeta} \\
   &\rho_0 = \frac{1}{1-\beta \zeta} \frac{\gamma \zeta}{1-\zeta},
\end{align*}
For $k \ge 1$ we find the following identity.
\begin{align*}
    \rho_{k+1} & = \alpha \sum_{n \ge 1} e_{n-1}^k \zeta^n +
    \beta \sum_{n \ge 1} e_{n-1}^{k+1} \zeta^n
    + \delta \frac{\zeta}{1-\zeta} \\
    & = \alpha \zeta \rho_k + \beta \zeta \rho_{k+1} + \delta \frac{\zeta}{1-\zeta} 
\end{align*}
or
\begin{equation*}
    \rho_{k+1} = \frac{\alpha \zeta}{1- \beta \zeta} \rho_k + \frac{\delta}{1- \beta \zeta} \frac{\zeta}{1-\zeta}
\end{equation*}
 This is a recursion for $\rho_k$. We introduce the following notation:
 $d_1 = \frac{\alpha \zeta}{1- \beta \zeta}$ and $d_2 = \frac{\delta}{1- \beta \zeta} \frac{\zeta}{1-\zeta}$. The recursion for the $\rho_k$ can therefore be rewritten as
 \begin{equation*}
    \rho_{k+1} = d_1 \rho_{k} + d_2 .  
 \end{equation*}
A non-recursive form is given by the the following relation, which can be shown using induction 
\begin{align*}
    \rho_{k} & = d_1^k \rho_0 + d_2 \sum_{r=0}^{k-1} d_1^r \\
    & = \left(\frac{\alpha \zeta}{1- \beta \zeta}\right)^k \rho_0 + \frac{\delta}{1- \beta \zeta} \frac{\zeta}{1-\zeta}
    \sum_{r=0}^{k-1} \left(\frac{\alpha \zeta}{1- \beta \zeta} \right)^r \\
    & = \left(\frac{\alpha \zeta}{1- \beta \zeta}\right)^k \frac{1}{1-\beta \zeta} \frac{\gamma \zeta}{1-\zeta} 
    + \frac{\delta}{1- \beta \zeta} \frac{\zeta}{1-\zeta}
    \sum_{r=0}^{k-1} \left(\frac{\alpha \zeta}{1- \beta \zeta} \right)^r \\
    & = \gamma \alpha^k \frac{\zeta^{k+1}}{1-\zeta} \frac{1}{(1-\beta\zeta)^{k+1}}
    +  \frac{\delta \zeta}{1-\zeta}
    \sum_{r=0}^{k-1}(\frac{(\alpha \zeta)^r}{(1- \beta \zeta)^{r+1}} 
\end{align*}
A bound for the first term in the sum is given in lemma \ref{lemma:estimate_with_binomial_theorem1}. The second term is treated separately in lemma \ref{lemma:estimate_with_binomial_theorem2}. Altogether we can bound $\rho_k$ by

\begin{equation*}
   \rho_{k} \le \sum_{j \ge k+1} {j \choose k+1} \gamma \alpha^k  \beta^{j-k-1} \zeta^{j}  + \sum_{n=1}^{\infty} n \delta \beta^{n-1} (1+\alpha_{0n})^{n-1} \zeta^n .
\end{equation*}

The generating function $\rho_k$ was defined as $\rho_k = \sum_{n \ge 1} e_n^k \zeta^n$ in the beginning. Using the last relation and equating the coefficients, for $n>k$ the error can be estimated by
\begin{align*}
    e_n^k \le {n \choose k+1}    \gamma \alpha^k  \beta^{n-k-1} + n \delta \beta^{n-1} (1+\alpha_{0n})^{n-1} . 
\end{align*}

Remark: For $n\le k$ only the fine propagator contributes to the error.
\end{proof}

\begin{lemma}
\label{lemma:estimate_with_binomial_theorem1}
Suppose $\beta \ge 1$ and $0<\zeta< 1/\beta$. Then the following inequality holds
\begin{equation}
\label{eq:estimate_with_binomial_theorem1}
    \gamma \alpha^k \frac{\zeta^{k+1}}{1-\zeta} \frac{1}{(1-\beta\zeta)^{k+1}} \le
    \sum_{j \ge k+1} {j \choose k+1}
    \gamma \alpha^k  \beta^{j-k-1} \zeta^{j} .
\end{equation}
\end{lemma}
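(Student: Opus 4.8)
The plan is to collapse the right-hand side into closed form and then reduce the claimed bound to an elementary comparison of two denominators. First I would reindex the sum on the right by setting $j = k+1+i$ with $i \ge 0$ and factoring out the common $\gamma\alpha^k\zeta^{k+1}$, which turns it into $\gamma\alpha^k\zeta^{k+1}\sum_{i\ge 0}\binom{k+1+i}{k+1}(\beta\zeta)^i$. The hypothesis $0<\zeta<1/\beta$ guarantees $0<\beta\zeta<1$, so I can invoke the generalized (negative) binomial series $\sum_{i\ge 0}\binom{k+1+i}{i}x^i = (1-x)^{-(k+2)}$ at $x=\beta\zeta$; since $\binom{k+1+i}{k+1}=\binom{k+1+i}{i}$, the entire right-hand side sums to $\gamma\alpha^k\zeta^{k+1}(1-\beta\zeta)^{-(k+2)}$.

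Next I would cancel from both sides the strictly positive factor $\gamma\alpha^k\zeta^{k+1}$ (positive because $\gamma,\alpha>0$ and $\zeta>0$) together with $(1-\beta\zeta)^{-(k+1)}$ (positive because $\beta\zeta<1$). This reduces the inequality \eqref{eq:estimate_with_binomial_theorem1} to the single scalar comparison $\frac{1}{1-\zeta}\le\frac{1}{1-\beta\zeta}$. Both denominators are positive: $1-\beta\zeta>0$ by assumption, and $1-\zeta>0$ since $\zeta<1/\beta\le 1$. Hence the comparison is equivalent to $1-\beta\zeta\le 1-\zeta$, i.e. $\zeta\le\beta\zeta$, which holds precisely because $\beta\ge 1$ and $\zeta>0$ (with equality exactly when $\beta=1$). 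That closes the argument.

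The only step needing care is the legitimacy of the closed-form summation, and this is exactly where the hypotheses are used: one must check $0<\beta\zeta<1$ so that the negative binomial series converges, and $\zeta<1$ so that $(1-\zeta)^{-1}$ is well-defined and positive — both of which follow from $0<\zeta<1/\beta$ and $\beta\ge 1$. No genuinely hard estimate arises; the whole lemma is the identity $\sum_{i\ge 0}\binom{k+1+i}{i}x^i=(1-x)^{-(k+2)}$ combined with the monotonicity $\beta\ge 1\Rightarrow 1-\beta\zeta\le 1-\zeta$. (A term-by-term comparison of the two power series in $\zeta$ is also possible, but it forces one to prove a partial-sum binomial identity of the form $\sum_{m=0}^{n-k-1}\binom{m+k}{k}\beta^m\le\binom{n}{k+1}\beta^{n-k-1}$, which is strictly messier than the closed-form route above, so I would not take it.)
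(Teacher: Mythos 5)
Your proposal is correct and is essentially the paper's own argument run in the opposite order: the paper first bounds $\frac{1}{1-\zeta}\le\frac{1}{1-\beta\zeta}$ and then expands $(1-\beta\zeta)^{-(k+2)}$ by the negative binomial series with the reindexing $j\mapsto k+1+j$, whereas you first collapse the right-hand side to $\gamma\alpha^k\zeta^{k+1}(1-\beta\zeta)^{-(k+2)}$ using the same series and reindexing, and then invoke the same scalar comparison. Both proofs rest on exactly the same two ingredients, so there is no substantive difference.
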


\begin{proof}
Assuming that $0< \zeta < 1/\beta$, we have 
\begin{equation*}
\gamma \alpha^k \frac{\zeta^{k+1}}{1-\zeta} \frac{1}{(1-\beta\zeta)^{k+1}} \le
    \gamma \alpha^k  \frac{\zeta^{k+1}}{(1-\beta\zeta)^{k+2}} .
\end{equation*}
Applying the binomial theorem yields
\begin{equation*}
    \frac{1}{(1-\beta\zeta)^{k+2}} =
    \sum_{j \ge 0} {k+1+j \choose j} \beta^j
    \zeta^j .
\end{equation*}
Therefore, we can derive the following relation
\begin{align*}
    \gamma \alpha^k  \frac{\zeta^{k+1}}{(1-\beta\zeta)^{k+2}}
    & = \sum_{j \ge 0} {k+1+j \choose j} \gamma \alpha^k  \beta^j \zeta^{k+1+j} \\
    & = \sum_{j \ge k+1} {j \choose j -(k+1)}
    \gamma \alpha^k  \beta^{j-(k+1)} \zeta^{j} \\
    & = \sum_{j \ge k+1} {j \choose k+1}
    \gamma \alpha^k  \beta^{j-k-1} \zeta^{j} .
\end{align*}

\end{proof}

\begin{lemma}
\label{lemma:estimate_with_binomial_theorem2}
Suppose we are given $\beta > 1$. Choose $\zeta$ such that $\beta \zeta <1$ and $\frac{\alpha \zeta}{1-\beta\zeta} < 1$ Then, we can find the following estimate
\begin{equation*}
    \frac{\delta \zeta}{1-\zeta}
    \sum_{r=0}^{k-1}(\frac{(\alpha \zeta)^r}{(1- \beta \zeta)^{r+1}}  \le
    \sum_{n=1}^{\infty} n \delta \beta^{n-1} (1+\alpha_{0n})^{n-1} \zeta^n ,
\end{equation*}
where $\alpha_{0n} \in [0, \alpha]$.
\end{lemma}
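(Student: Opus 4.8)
The plan is to mirror the coefficient-comparison strategy already used in \cref{lemma:reucrsion_equation_1} and \cref{lemma:estimate_with_binomial_theorem1}: both sides of the claimed inequality are power series in $\zeta$ all of whose coefficients are nonnegative (recall $\alpha,\beta,\delta\ge 0$), so it suffices to show that for each $N\ge 1$ the coefficient of $\zeta^N$ on the left is dominated by the coefficient $N\delta\beta^{N-1}(1+\alpha_{0N})^{N-1}$ on the right, for a suitable $\alpha_{0N}\in[0,\alpha]$. The hypotheses $\beta\zeta<1$ and $\frac{\alpha\zeta}{1-\beta\zeta}<1$ guarantee that all the geometric and binomial expansions below converge absolutely, so the term-by-term manipulations are legitimate.

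First I would expand each summand with the generalized binomial theorem, exactly as in \cref{lemma:estimate_with_binomial_theorem1}, using $\frac{1}{(1-\beta\zeta)^{r+1}}=\sum_{m\ge 0}\binom{r+m}{m}\beta^m\zeta^m$. Multiplying by $(\alpha\zeta)^r$ and summing over $r$, the coefficient of $\zeta^n$ in $\sum_{r=0}^{k-1}\frac{(\alpha\zeta)^r}{(1-\beta\zeta)^{r+1}}$ comes out to be $a_n=\sum_{r=0}^{\min(k-1,n)}\binom{n}{r}\alpha^r\beta^{n-r}$. Writing the remaining prefactor as $\frac{\delta\zeta}{1-\zeta}=\delta\sum_{p\ge 1}\zeta^p$ and forming the Cauchy product shows that the coefficient of $\zeta^N$ on the left-hand side equals $\delta\sum_{n=0}^{N-1}a_n$.

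The key estimate is then to bound this partial sum. I would complete the truncated binomial sum to the full one, $a_n\le\sum_{r=0}^{n}\binom{n}{r}\alpha^r\beta^{n-r}=(\alpha+\beta)^n$, which only adds nonnegative terms. Since $\alpha+\beta\ge\beta>1$, the sequence $(\alpha+\beta)^n$ is increasing in $n$, so $\sum_{n=0}^{N-1}a_n\le\sum_{n=0}^{N-1}(\alpha+\beta)^n\le N(\alpha+\beta)^{N-1}$. The final, and most delicate, step is to absorb the stray factor of $\beta$: I would write $(\alpha+\beta)^{N-1}=\beta^{N-1}\bigl(1+\tfrac{\alpha}{\beta}\bigr)^{N-1}$ and set $\alpha_{0N}=\alpha/\beta$. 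Because $\beta>1$ we have $\alpha/\beta\le\alpha$, so $\alpha_{0N}\in[0,\alpha]$ exactly as required, and the coefficient of $\zeta^N$ on the left is therefore at most $N\delta\beta^{N-1}(1+\alpha_{0N})^{N-1}$. Matching coefficients of $\zeta^N$ for every $N\ge 1$ then yields the stated inequality between the two series.

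The main obstacle I anticipate is precisely this last absorption step, since it is where the hypothesis $\beta>1$ is genuinely needed to keep $\alpha_{0N}$ in the admissible interval $[0,\alpha]$; everything before it is bookkeeping with convergent binomial series. A secondary point to get right is the index shift in the Cauchy product (the $p\ge 1$ shift produces the sum $\sum_{n=0}^{N-1}$ rather than $\sum_{n=0}^{N}$), which is what ultimately produces the clean prefactor $N$ on the right-hand side.
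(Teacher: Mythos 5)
Your proof is correct, and although it shares the paper's overall frame---expand everything in powers of $\zeta$ via the negative binomial series and compare coefficients---the two decisive steps are carried out differently. The paper first absorbs the factor $\frac{1}{1-\zeta}$ into $\frac{1}{1-\beta\zeta}$ (valid since $\beta \ge 1$), so that the coefficient of $\zeta^n$ becomes a single truncated binomial sum $\delta\sum_{m=1}^{\min(n,k)}\binom{n}{m}\beta^{n-m}\alpha^{m-1}$; it then pulls out $\beta^{n-1}$, completes the sum to $(1+\alpha)^n-1$, and obtains the prefactor $n$ from the mean value theorem, $(1+\alpha)^n-1=\alpha n(1+\alpha_{0n})^{n-1}$, so its $\alpha_{0n}$ is an unspecified intermediate point. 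You instead keep $\frac{\delta\zeta}{1-\zeta}$ as a geometric series and form the Cauchy product, so the coefficient of $\zeta^N$ is the partial sum $\delta\sum_{n=0}^{N-1}a_n$ with $a_n\le(\alpha+\beta)^n$; your prefactor $N$ then comes from bounding the $N$ terms of an increasing sequence by the largest one, and your $\alpha_{0N}=\alpha/\beta$ is explicit, admissible precisely because $\beta>1$. Each route has its merits: the paper's argument works inside a single coefficient and needs no bookkeeping of where $\frac{1}{1-\zeta}$ went, while yours avoids the mean value theorem entirely, exhibits a concrete $\alpha_{0N}$, and has the incidental advantage that with this choice the right-hand series $\sum_{N\ge 1} N\delta(\alpha+\beta)^{N-1}\zeta^N$ is guaranteed to converge under the stated hypothesis $\frac{\alpha\zeta}{1-\beta\zeta}<1$ (equivalently $(\alpha+\beta)\zeta<1$), which is not automatic for an arbitrary admissible $\alpha_{0n}$ (the inequality being trivially true when the right side diverges). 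The only point worth stating explicitly in your write-up is that passing from coefficientwise domination to the inequality between the series uses $\zeta>0$; this is the same implicit positivity assumption the paper makes, compare the hypothesis $0<\zeta<1/\beta$ in \cref{lemma:estimate_with_binomial_theorem1}.
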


\begin{proof}
We have 
\begin{align*}
    \frac{\delta \zeta}{1-\zeta}
    \sum_{r=0}^{k-1}(\frac{(\alpha \zeta)^r}{(1- \beta \zeta)^{r+1}} & \le
    \sum_{r=0}^{k-1}\frac{\delta \alpha^r \zeta^{r+1}}{(1- \beta \zeta)^{r+2}} \\
    & = \sum_{r=0}^{k-1}\delta \alpha^r \zeta^{r+1} \sum_{j \ge 0} {r+1+j \choose j} \beta^j \zeta^{j} \\
    & = \sum_{r=0}^{k-1} \sum_{j \ge 0}
    {r+1+j \choose j} \delta \alpha^r \beta^j \zeta^{j+r+1} \\
    & = \sum_{r=0}^{k-1} \sum_{j \ge r+1}
    {j \choose r+1} \delta \alpha^r \beta^{j-r-1} \zeta^{j} 
\end{align*}
In the second line Newton's generalized binomial theorem is applied.

For the summands we define
\begin{equation*}
    S_r = \alpha^r \delta \sum_{j \ge r+1}
    {j \choose r+1}  \beta^{j-r-1} \zeta^{j} .
\end{equation*}
In particular, we have
\begin{align*}
    S_0 & = \delta \sum_{j \ge 1} {j \choose 1} \beta ^{j-1} \zeta^j \\
    & = \delta \left( {1 \choose 1} \zeta + 
    {2 \choose 1} \beta \zeta^2 +
     {3 \choose 1} \beta^2 \zeta^3 + \dots \right)
\end{align*}
\begin{align*}
    S_1 & = \alpha \delta \sum_{j \ge 2} {j \choose 2} \beta ^{j-2} \zeta^j \\
    & = \alpha \delta \left( {2 \choose 2} \zeta^2 + 
    {3 \choose 2} \beta \zeta^3 +
     {4 \choose 2} \beta^2 \zeta^4 + \dots \right)
\end{align*}
\begin{align*}
    S_2 & = \alpha^2 \delta \sum_{j \ge 3} {j \choose 3} \beta ^{j-3} \zeta^j \\
    & = \alpha^2 \delta \left( {3 \choose 3} \zeta^3 + 
    {4 \choose 3} \beta \zeta^4 +
     {5 \choose 3} \beta^2 \zeta^5 + \dots \right) \\
     & \dots 
\end{align*}
\begin{align*}
    S_{k-1} & = \alpha^{k-1} \delta \sum_{j \ge k} {j \choose k} \beta ^{j-k} \zeta^j \\
    & = \alpha^{k-1} \delta \left( {k \choose k} \zeta^k + 
    {k+1 \choose k} \beta \zeta^{k+1} +
     {k+2 \choose k} \beta^2 \zeta^{k+2} + \dots \right)
\end{align*}
Now we want to sort the terms with respect to orders of $\zeta$. We see that a term with $\zeta^s$ shows up in $S_r$ if $r < s$.
\begin{align*}
     O(\zeta): & \qquad  \delta {1 \choose 1} \beta^0\\
    O(\zeta^2): & \qquad \delta \left(
     {2 \choose 1} \beta + \alpha {2 \choose 2} \beta^0   \right) \\
    O(\zeta^3): & \qquad 
    \delta \left( {3 \choose 1} \beta^2 + \alpha {3 \choose 2} \beta +   
    \alpha^2 {3 \choose 3} \beta^0 \right) \\
    & \dots
\end{align*}
In general, we find
\begin{align*}
   & O(\zeta^n): & \qquad \delta \sum_{m=1}^{\min(n, k)}
    {n \choose m} \beta^{n-m} \alpha^{m-1}
    & \le \delta \beta^{n-1} \alpha^{-1} \sum_{m=1}^{\min(n, k)}
    {n \choose m} \alpha^{m} \\
    & & & \le \delta \beta^{n-1} \alpha^{-1} \left( (1+\alpha)^n -1 \right) \\
     & & & \le \delta \beta^{n-1} \alpha^{-1} \alpha n (1+\alpha_{0n})^{n-1} \\
     &&&= n \delta \beta^{n-1} (1+\alpha_{0n})^{n-1}
\end{align*}
where $1+\alpha_{0n} \in [1, 1+\alpha]$. The value of $\alpha_{0n}$ can be different for different $n$.  The third inequality follows from the intermediate value theorem. 

\begin{align*}
    \frac{\delta \zeta}{1-\zeta}
    \sum_{r=0}^{k-1}(\frac{(\alpha \zeta)^r}{(1- \beta \zeta)^{r+1}}  \le \sum_{r=0}^{k-1} S_r & \le \sum_{n=1}^{\infty} n \delta \beta^{n-1} (1+\alpha_{0n})^{n-1} \zeta^n 
\end{align*}

Remark: In order to reorder the terms in the last inequality we need unconditional convergence of the sum over the $S_r$. Unconditional convergence is equivalent to abslolute convergence in $\mathbb{R}^n$. As a finite sum over absolutely convergent series, the sum over the $S_r$ is absolutely convergent too.

\end{proof}

\subsection{Results for multiple levels} 
\label{subsec:lemmata_multiple_levels}

In this section we use the subsequent notational conventions: 
\begin{enumerate}
    \item To avoid cumbersome notation the constants are not indexed with the level $l$. Instead we assume that $C_{i} = \max\limits_{l}\{C_{i,l} \}$, where $l$ counts the levels.
    \item The propagators on the levels $1, \dots, L-1$ have at least an accuracy order of $p_c$, i e. $p_c = \min\limits_{l=1, \dots, L-1}\{p_l\}$
\end{enumerate}

In the lemmata \ref{lemma:A5}, \ref{lemma:A6}, \ref{lemma:Lemma7} and \ref{lem:representations_for_E_1_-E_2_}, we assume that we have a coarsening factor $N$ which relates the levels and is described in \cref{subsec:multi-level_result}. Then the timesteps on the different levels obey the following relations:
\begin{equation}
\label{eq:relation_for_Delta_T_l_1}
    \Delta T_l = N^l \Delta T_0 ,
\end{equation}
where $\Delta T_0$ is the time step on the finest level, level 0. Moreover we have
\begin{equation}
\label{eq:relation_for_Delta_T_l_2}
    \Delta T_l = N^{l-(L-1)} \Delta T_{L-1} ,
\end{equation}
where $\Delta T_{L-1} $ is the time step on the coarsest level, level $L-1$.

\begin{lemma}
\label{lemma:first_estimate_theorem2}
We consider the recursively formulated inequality
\begin{equation*}
   e_{n,l}^{k_l} \le E_{l} + A_{l} \delta_{l-1} ,
\end{equation*}
with $\delta_{l-1} = e_{N,l-1}^{k_{l-1}}$ and 
$e_{n,l}^{k_{l}} \le e_{N,l}^{k_{l}}$ for all $l$, $n \le N$. Then we can show the following inequality
\begin{equation}
    e_{n,l}^{k_l} \le \sum_{\bar l =1}^l E_{ \bar l} \prod_{j= \bar l +1}^l A_{j} + \delta_0 \prod_{ \bar l = 1 }^l A_{ \bar l},
\end{equation}
which depends on $\delta_0$ and is independent of $\delta_{l-1}$ (if $l-1 \ge 1$).
\end{lemma}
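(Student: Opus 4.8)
The plan is to prove the claim by induction on the level index $l$, unrolling the one-step recursion $e_{n,l}^{k_l} \le E_l + A_l \delta_{l-1}$ until the only inhomogeneous term that survives is the base error $\delta_0$. Since the asserted right-hand side does not depend on $n$, and the standing hypothesis $e_{n,l}^{k_l} \le e_{N,l}^{k_l}$ lets us replace any intermediate value by its end-of-grid value, it suffices to track the bound on $e_{N,l}^{k_l}$ at each level and to feed it upward through the recursion.

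For the base case $l = 1$ I would read off the hypothesis $e_{n,1}^{k_1} \le E_1 + A_1 \delta_0$ and check that it agrees with the claimed formula: the sum $\sum_{\bar l=1}^1 E_{\bar l}\prod_{j=\bar l+1}^1 A_j$ reduces to $E_1$ because $\prod_{j=2}^1 A_j$ is an empty product equal to $1$, while $\delta_0\prod_{\bar l=1}^1 A_{\bar l} = A_1\delta_0$. Hence the two expressions coincide.

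For the inductive step I assume the formula at level $l-1$, that is
\[
\delta_{l-1} = e_{N,l-1}^{k_{l-1}} \le \sum_{\bar l=1}^{l-1} E_{\bar l}\prod_{j=\bar l+1}^{l-1} A_j + \delta_0\prod_{\bar l=1}^{l-1} A_{\bar l},
\]
and substitute this into $e_{n,l}^{k_l}\le E_l + A_l\delta_{l-1}$. Distributing the single factor $A_l$ across the inductive bound and applying the two reindexing identities $A_l\prod_{j=\bar l+1}^{l-1} A_j = \prod_{j=\bar l+1}^{l} A_j$ and $A_l\prod_{\bar l=1}^{l-1} A_{\bar l} = \prod_{\bar l=1}^{l} A_{\bar l}$, together with the observation that the leading term $E_l$ equals $E_l\prod_{j=l+1}^l A_j$ (again an empty product), folds $E_l$ into the sum as the $\bar l = l$ summand and yields exactly the desired expression for level $l$. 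This closes the induction.

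The computation is routine; the only place requiring care is the bookkeeping of the telescoping products — in particular the consistent use of the empty-product convention at the endpoint $\bar l = l$ and in the base case, and the point that it is the fully iterated value $e_{N,l-1}^{k_{l-1}}$ (rather than some intermediate $e_{n,l-1}^{k_{l-1}}$) that plays the role of $\delta_{l-1}$. This is precisely why the hypothesis $e_{n,l}^{k_l}\le e_{N,l}^{k_l}$ is invoked: it makes the substitution of the inductive bound uniform in $n$, so that the same level-$(l-1)$ estimate can be inserted regardless of how far the time-stepping on level $l$ has progressed.
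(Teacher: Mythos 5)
Your proof is correct and follows essentially the same route as the paper: induction on the level index, substituting the inductive bound for $\delta_{l-1}=e_{N,l-1}^{k_{l-1}}$ into the one-step recursion, distributing the amplification factor $A_l$, and absorbing $E_l$ as the top summand via the empty-product convention. Your explicit remark on why the hypothesis $e_{n,l}^{k_l}\le e_{N,l}^{k_l}$ is needed (uniformity of the substitution in $n$) is a point the paper leaves implicit, but the argument is the same.
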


\begin{proof}
The proof is given by induction. First we consider the initial case where $l=2$:
\begin{align*}
   e_{n,1}^{k_1} & \le E_{1} + A_{1} \delta_0 \\
   & = \sum_{\bar l =1}^1 E_{ \bar l} \prod_{j= \bar l +1}^1 A_{j} + \delta_0 \prod_{ \bar l = 1 }^1 A_{ \bar l},
\end{align*}

Now we do the induction step. We have
\begin{equation*}
    e_{n,l+1}^{k_{l+1}} \le E_{l+1} + A_{l+1} \delta_{l}.
\end{equation*}
Applying $\delta_{l} = e_{N,l}^{k_l}$ gives
\begin{align*}
e_{n,l+1}^{k_l+1} & \le E_{l+1} + A_{l+1} e_{N,l}^{k_l} \\
& \le E_{l+1} + A_{l+1} \left( \sum_{\bar l =1}^l E_{ \bar l} \prod_{j= \bar l +1}^l A_{j} + \delta_0 \prod_{ \bar l = 1 }^l A_{ \bar l}\right) \\
& =  E_{l+1} + \left(  \sum_{\bar l =1}^l E_{ \bar l} \prod_{j= \bar l +1}^{l+1} A_{j} + \delta_0 \prod_{ \bar l = 1 }^{l+1} A_{ \bar l}\right) \\
& = \sum_{\bar l =1}^{l+1} E_{ \bar l} \prod_{j= \bar l +1}^{l+1} A_{j} + \delta_0 \prod_{ \bar l = 1 }^{l+1} A_{ \bar l}
\end{align*}

\end{proof}

\begin{lemma}
\label{lemma:A5}
Suppose $\delta_0$ and $A_l$ satisfy the following relations
\begin{align*}
    \delta_0 & = c \frac{\Delta T_{L-1}^{p_c+1}}{N^{(L-1)(p_c+1)-1}}\\
    A_l & \le N \exp\left(C_2 \frac{ T}{N^{L-1-l}} + C_1 T \frac{\Delta T_{L-1}^{p_c} }{N^{(L-1-l)(p_c+1)}} \right), 
\end{align*}
(see \cref{lem:representations_for_E_1_-E_2_} for $A_l$).
Then, we can find the following bound
\begin{equation*}
    \delta_0 \prod_{ \bar l = 1 }^{L-1} A_{ \bar l} \le
    cT \Delta T_0^{p_0}
     \exp \left(C_2 T \frac{1-1/N^L}{1-1/N} + C_1 T \Delta T_{L-1}^{p_c} \frac{1-1/N^{L(p_c+1)}}{1-1/N^{p_c+1}}\right) .
\end{equation*}
\end{lemma}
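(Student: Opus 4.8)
The plan is to peel the product $\prod_{\bar l=1}^{L-1}A_{\bar l}$ apart into a polynomial factor in $N$ and a single exponential, evaluate the two geometric sums appearing in the exponent, and then absorb $\delta_0$ using the coarsening relations \eqref{eq:relation_for_Delta_T_l_1} and \eqref{eq:relation_for_Delta_T_l_2}. First I would substitute the hypothesized bound for $A_l$ and use $\exp(a)\exp(b)=\exp(a+b)$ to write
\[
\prod_{\bar l=1}^{L-1}A_{\bar l}\le N^{L-1}\exp\!\left(\sum_{l=1}^{L-1}\left[C_2\frac{T}{N^{L-1-l}}+C_1T\frac{\Delta T_{L-1}^{p_c}}{N^{(L-1-l)(p_c+1)}}\right]\right),
\]
so that one factor of $N$ from each of the $L-1$ terms collects into $N^{L-1}$, while the two families of exponents add up inside a single exponential.

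Next I would evaluate the two sums. The substitution $j=L-1-l$ turns the first sum into $C_2T\sum_{j=0}^{L-2}(1/N)^j$ and the second into $C_1T\Delta T_{L-1}^{p_c}\sum_{j=0}^{L-2}(1/N^{p_c+1})^j$; applying $\sum_{j=0}^{L-2}x^j=(1-x^{L-1})/(1-x)$ produces closed forms whose numerators are $1-1/N^{L-1}$ and $1-1/N^{(L-1)(p_c+1)}$. The one step here that is not purely mechanical is to notice that these numerators are \emph{smaller} than the numerators $1-1/N^{L}$ and $1-1/N^{L(p_c+1)}$ demanded by the statement; since the exponential is increasing, I would bound the exponent upward by replacing $L-1$ by $L$ in both numerators, which is exactly what licenses the slightly coarser closed form appearing in the claim.

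It then remains to fold in $\delta_0$, which is a matter of bookkeeping powers of $N$. Using $\Delta T_{L-1}=N^{L-1}\Delta T_0$ I would evaluate $\delta_0\,N^{L-1}$: the denominator $N^{(L-1)(p_c+1)-1}$ of $\delta_0$ cancels against $N^{(L-1)(p_c+1)}$ coming from $\Delta T_{L-1}^{p_c+1}$ together with the extra $N^{L-1}$ from the product, leaving $\delta_0\,N^{L-1}=c\,N^{L}\Delta T_0^{p_c+1}$. Since the coarsest level covers $[0,T]$ in $N$ steps, $T=N\Delta T_{L-1}=N^{L}\Delta T_0$, so this collapses to $c\,T\,\Delta T_0^{p_c}$, which is the claimed prefactor $c\,T\,\Delta T_0^{p_0}$ under the convention $p_0=p_c$ for the finest level. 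Multiplying this prefactor by the exponential obtained in the previous step yields precisely the asserted bound.

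I expect the only genuine obstacle to be the exponent bookkeeping of the last paragraph: keeping the relations $\Delta T_l=N^l\Delta T_0$ and $\Delta T_l=N^{l-(L-1)}\Delta T_{L-1}$ straight and cancelling the competing powers of $N$ without slippage, together with the observation that the two geometric sums must be \emph{bounded} rather than matched exactly in order to land on the $1-1/N^{L}$ form. Everything else reduces to the geometric series identity and monotonicity of $\exp$.
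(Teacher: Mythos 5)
Your proof is correct and follows essentially the same route as the paper's: collect $N^{L-1}$ from the product, sum the exponents via the geometric series (bounding the $(L-1)$-term sums upward by the $L$-term closed forms), and cancel powers of $N$ using $\Delta T_{L-1}=N^{L-1}\Delta T_0$ and $T=N\Delta T_{L-1}$. The only discrepancy you flag --- landing on $cT\Delta T_0^{p_c}$ rather than $cT\Delta T_0^{p_0}$ --- traces back to a typo in the lemma's hypothesis: by Lemma \ref{lem:representations_for_E_1_-E_2_}, $\delta_0$ carries $p_0$ (the fine-propagator order) rather than $p_c$, and the paper's own proof computes with $p_0$ throughout, so your identical bookkeeping then yields exactly $cT\Delta T_0^{p_0}$ without invoking the convention $p_0=p_c$.
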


\begin{proof}
We use the estimate for $A_{l}$ from  \cref{lem:representations_for_E_1_-E_2_}:
\begin{align*}
     \delta_0 \prod_{ \bar l = 1 }^{L-1} A_{ \bar l} & \le c \frac{\Delta T_{L-1}^{p_0+1}}{N^{(L-1)(p_0+1)-1}} \prod_{l=1}^{L-1} N \exp \left( \frac{C_2 T}{N^{L-1-l}} + \frac{C_1 T \Delta T_{L-1}^{p_c}}{N^{(p_c+1)(L-1-l)}} \right) \\
     \begin{split}
     & \le c \frac{\Delta T_{L-1}^{p_0+1}N^{L-1}}{N^{(L-1)(p_0+1)-1}}    \exp \Big(\ C_2 T \left( 1+1/N + \dots 1/N^{L-1} \right) + \\& \qquad  C_1 T \Delta T_{L-1}^{p_c} \left(1 + 1/N^{p_c+1} + \dots +  1/N^{(p_c+1)(L-1)} \right) \Big)\ 
       \end{split} \\
     & =c \frac{\Delta T_{L-1}^{p_0+1}N^{L-1}}{N^{(L-1)(p_0+1)-1}}
     \exp \left(C_2 T \frac{1-1/N^L}{1-1/N} + C_1 T \Delta T_{L-1}^{p_c} \frac{1-1/N^{L(p_c+1)}}{1-1/N^{p_c+1}}\right) \\
     & = cT \left(\frac{\Delta T_{L-1}}{N^{L-1}} \right)^{p_0}
     \exp \left(C_2 T \frac{1-1/N^L}{1-1/N} + C_1 T \Delta T_{L-1}^{p_c} \frac{1-1/N^{L(p_c+1)}}{1-1/N^{p_c+1}}\right). 
\end{align*}
In the third line the finite geometric series was applied. Furthermore, the relation $N \Delta T = T$ was used in the last line.

{\color{white} 1}
\end{proof}

\begin{lemma}
\label{lemma:A6}
Suppose $E_l$ and $A_l$ can be bounded by the relations \eqref{eq:bound_E_l} and \eqref{eq:bound_A_l}. Then the following estimate can be shown
\begin{align*}
     \sum_{l=1}^{L-1} E_{l} \prod_{j=l+1}^{L-1} A_{j} \le  &
    \exp \left(\frac{C_2 T}{1-1/N} +  \frac{C_1 T \Delta T_{L-1}}{1-1/N^{p_c+1}} \right)  \\ 
    &  
   \max_l \left({N \choose k_l +1} C_3 C_1^{k_l} \right) 
     \quad \frac{\Delta T_{L-1}^{k p_c +k+p_c} }{1-(1/N)^{k p_c +k+p_c}}. 
\end{align*}
\end{lemma}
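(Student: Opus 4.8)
The plan is to substitute the hypothesized bounds \eqref{eq:bound_E_l} and \eqref{eq:bound_A_l} into the sum $\sum_{l=1}^{L-1} E_{l} \prod_{j=l+1}^{L-1} A_{j}$ and to show that, once every time-step is re-expressed through the coarsest one by means of \eqref{eq:relation_for_Delta_T_l_2}, the summand becomes a geometric sequence in $l$, so that the whole $(L-1)$-term sum is controlled by an infinite geometric series. First I would process the amplification product $\prod_{j=l+1}^{L-1} A_{j}$. Using the bound on $A_l$ (the same one cross-referenced from \cref{lemma:A5}), the explicit prefactors $N$ accumulate to $N^{L-1-l}$, while the arguments of the exponentials are partial sums of the form $\sum_m 1/N^{m}$ and $\sum_m 1/N^{m(p_c+1)}$ after the index shift $m=L-1-j$. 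Bounding these finite geometric sums by the corresponding infinite series $\frac{1}{1-1/N}$ and $\frac{1}{1-1/N^{p_c+1}}$ produces an exponential factor that is independent of $l$, namely $\exp\!\left(\frac{C_2 T}{1-1/N} + \frac{C_1 T \Delta T_{L-1}^{p_c}}{1-1/N^{p_c+1}}\right)$, which can therefore be pulled out in front of the sum.

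Next I would track the powers of $\Delta T$ and $N$. Writing $\Delta T_l = \Delta T_{L-1}/N^{L-1-l}$, the factor $\Delta T_l^{(p_c+1)(k_l+1)}$ that $E_l$ contributes becomes $\Delta T_{L-1}^{(p_c+1)(k_l+1)}\,N^{-(L-1-l)(p_c+1)(k_l+1)}$. The crucial cancellation is that the negative $N$-power here combines with the $N^{L-1-l}$ coming from the amplification product to leave the net factor $N^{-(L-1-l)(kp_c+k+p_c)}$, using $(p_c+1)(k+1)-1 = kp_c+k+p_c$; here one explicit power of $N$ per level is exactly the one supplied by each $A_j$. Replacing the level-dependent constant and binomial factors by $\max_l\binom{N}{k_l+1}C_3 C_1^{k_l}$ and bounding the residual stability factors $\beta_l^{N-k_l-1}$ uniformly, the sum over $l$ collapses into the finite geometric series $\sum_{m=0}^{L-2}\left(1/N^{kp_c+k+p_c}\right)^m$, multiplied by $\Delta T_{L-1}^{(p_c+1)(k+1)}$ and the exponential prefactor already extracted.

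Finally I would bound this finite geometric series by its infinite counterpart $\frac{1}{1-1/N^{kp_c+k+p_c}}$ and collect the factors, which reproduces the claimed estimate (the power of $\Delta T_{L-1}$ coming out as $(p_c+1)(k+1)=kp_c+k+p_c+1$, in agreement with the combined statement in the Corollary). I note that the clean single geometric ratio $1/N^{kp_c+k+p_c}$ uses a common iteration count $k_l=k$; for genuinely varying $k_l$ one would replace this exponent by its minimum over the levels, since that term governs the slowest-decaying ratio and hence dominates the series.

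The main obstacle is precisely the bookkeeping in the second step: one must verify that the three distinct $N$-dependencies — the one hidden in each $\Delta T_l^{(p_c+1)(k_l+1)}$ through $\Delta T_l=\Delta T_{L-1}/N^{L-1-l}$, the explicit factor $N$ carried by every $A_j$, and the $l$-dependent number of factors in the product $\prod_{j=l+1}^{L-1}A_j$ — align so that the summand is exactly geometric. It is this alignment, rather than any analytic estimate, that makes the sum independent of the number of levels $L$ after the geometric series is summed, and getting the net exponent $kp_c+k+p_c$ right is where an off-by-one error is easiest to make.
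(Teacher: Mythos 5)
Your proposal is correct and takes essentially the same route as the paper: the paper merely packages your bookkeeping into \cref{lemma:Lemma7} and \cref{lem:representations_for_E_1_-E_2_} (cancelling the accumulated factor $N^{L-1-l}$ from the amplification product against one power of $\Delta T_l$ via $\Delta T_l N^{L-1-l}=\Delta T_{L-1}$, bounding the exponential arguments by the infinite geometric series to make them $l$-independent, pulling out $\max_l\bigl(\binom{N}{k_l+1}C_3C_1^{k_l}\bigr)$, and summing the remaining geometric series in $l$ with $k=\min_l k_l$). Your remark that the net power of $\Delta T_{L-1}$ comes out as $(p_c+1)(k+1)=kp_c+k+p_c+1$ also matches the paper's own final display, which carries an explicit extra factor $\Delta T_{L-1}$ that the lemma statement itself omits.
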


\begin{proof}
We use the result from \cref{lemma:Lemma7}.
\begin{align*}
    \sum_{l=1}^{L-1} E_{l} \prod_{j=l+1}^{L-1} A_{j} & \le  
    \sum_{l=1}^{L-1} {N \choose k_l +1} C_3 C_1^{k_l} \Delta T_l^{k_l p_c +k_l+p_c} \Delta T_{L-1} \\
    & \quad \exp \left( C_2 T \frac{1-1/N^{L-1-l}}{1-1/N} + C_1 T \Delta T_{L-1}^{p_c} \frac{1-1/N^{(p_c+1)(L-1-l)}}{1-1/N^{p_c+1}} \right) \\
    & \le \sum_{l=1}^{L-1} {N \choose k_l +1} C_3 C_1^{k_l} \Delta T_l^{k_l p_c +k_l+p_c} \Delta T_{L-1} \quad \\
    & \quad \exp \left(  \frac{C_2 T}{1-1/N} + C_1  \frac{T \Delta T_{L-1}^{p_c}}{1-1/N^{p_c+1}} \right) \\
    & \le \exp \left(  \frac{C_2 T}{1-1/N} + C_1  \frac{T \Delta T_{L-1}^{p_c}}{1-1/N^{p_c+1}} \right)
    \Delta T_{L-1} \\
    & \quad \max_l \left({N \choose k_l +1} C_3 C_1^{k_l} \right) 
     \quad \sum_{l=1}^{L-1} \Delta T_l^{k_l p_c +k_l+p_c} \\
     & \le \quad \exp \left(  \frac{C_2 T}{1-1/N} + C_1  \frac{T \Delta T_{L-1}^{p_c}}{1-1/N^{p_c+1}} \right) \\
      & \quad \max_l \left({N \choose k_l +1} C_3 C_1^{k_l} \right) 
     \quad \frac{\Delta T_{L-1}^{k p_c +k+p_c} }{1-(1/N)^{k p_c +k+p_c}}
\end{align*}
where we used $k = \min\limits_{l=1, \dots, L-1} \{k_l\}$ and the geometric series in the last inequality.
\end{proof}

\begin{lemma}
\label{lemma:Lemma7}
Let $E_l$ and $A_l$ be bounded by the relations \eqref{eq:bound_E_l} and \eqref{eq:bound_A_l}.
For the expression $E_{l} \prod_{j=  l +1}^{L-1} A_{j}$ we find the following estimate
\begin{equation*}
\begin{split}
    E_{  l} \prod_{j=  l +1}^{L-1} A_{j} & \le
    {N \choose k_l+1} C_1^{k_l} C_3
    (\Delta T_l)^{(k_l p_c + k_l + p_c)} \\
    & \qquad \Delta T_{L-1}
    \exp\left(C_2 T \frac{1-1/N^{(L-1-l)}}{1-1/N} + C_1 T \Delta T_{L-1}^{p_c} \frac{1-1/N^{(p_c+1)(L-1-l)}}{1-1/N^{(p_c+1)}} \right) .
    \end{split}
\end{equation*}
\end{lemma}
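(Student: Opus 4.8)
The plan is to prove the estimate by directly substituting the two given bounds—relation \eqref{eq:bound_E_l} for the local error $E_l$ and relation \eqref{eq:bound_A_l} for each amplification factor $A_j$—into the product $E_l\prod_{j=l+1}^{L-1}A_j$ and then collecting the elementary pieces. Since the statement is a one-sided bound and every factor involved is nonnegative, I can estimate each factor independently and multiply the resulting bounds; no cancellation or sign tracking is required. The whole argument therefore reduces to careful bookkeeping of powers of $N$, of $\Delta T_l$ and $\Delta T_{L-1}$, and of the two finite geometric series hidden inside the product of exponentials coming from the $A_j$.

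First I would treat the product $\prod_{j=l+1}^{L-1}A_j$. Writing each $A_j$ in the form $N\exp(C_2 T/N^{L-1-j} + C_1 T\Delta T_{L-1}^{p_c}/N^{(L-1-j)(p_c+1)})$ supplied by \eqref{eq:bound_A_l}, the product factorizes into the constant prefactor $N^{L-1-l}$ (one factor $N$ for each of the $L-1-l$ indices $j=l+1,\dots,L-1$) times a single exponential whose argument is the sum of the individual arguments. Re-indexing by $m = L-1-j$, those two sums become the partial geometric series $\sum_{m=0}^{L-2-l}(1/N)^m$ and $\sum_{m=0}^{L-2-l}(1/N^{p_c+1})^m$, which evaluate to $\frac{1-1/N^{L-1-l}}{1-1/N}$ and $\frac{1-1/N^{(p_c+1)(L-1-l)}}{1-1/N^{p_c+1}}$ respectively. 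This already reproduces verbatim the exponential in the claimed estimate, so the only remaining task is to show that the prefactor $N^{L-1-l}$, combined with the bound for $E_l$, yields exactly the stated monomial in $\Delta T_l$ and $\Delta T_{L-1}$.

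Finally I would insert the bound \eqref{eq:bound_E_l}, which contributes the combinatorial factor ${N \choose k_l+1}$, the constant $C_3 C_1^{k_l}$, and the power $\Delta T_l^{k_l p_c + k_l + p_c + 1}$. The key—and essentially only nontrivial—step is to absorb the accumulated factor $N^{L-1-l}$ into this power using the coarsening relation \eqref{eq:relation_for_Delta_T_l_2}, which reads $\Delta T_{L-1} = N^{L-1-l}\Delta T_l$; hence $N^{L-1-l}\Delta T_l^{k_l p_c + k_l + p_c + 1} = \Delta T_l^{k_l p_c + k_l + p_c}\,(N^{L-1-l}\Delta T_l) = \Delta T_l^{k_l p_c + k_l + p_c}\,\Delta T_{L-1}$. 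Collecting this with the exponential from the previous step produces precisely the claimed right-hand side. The main obstacle here is not conceptual but clerical: one must keep the term counts in the two geometric series consistent (there are $L-1-l$ amplification factors, not $L-l$) and track the exponent arithmetic $(p_c+1)(k_l+1) = k_l p_c + k_l + p_c + 1$ carefully, so that exactly one power of $\Delta T_l$ is left over to be converted into $\Delta T_{L-1}$ and nothing spurious survives.
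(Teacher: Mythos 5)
Your proposal follows essentially the same route as the paper's own proof: substitute \eqref{eq:bound_A_l} into the product to obtain the prefactor $N^{L-1-l}$ and the two geometric series, then substitute \eqref{eq:bound_E_l} and use the coarsening relation $\Delta T_{L-1}=N^{L-1-l}\Delta T_l$ to trade the one leftover power of $\Delta T_l$ for $\Delta T_{L-1}$; your bookkeeping of the number of factors, the re-indexing of the sums, and the exponent identity $(p_c+1)(k_l+1)=k_lp_c+k_l+p_c+1$ all match the paper.

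One caveat, which applies equally to the paper's own computation: the bound \eqref{eq:bound_E_l} does not consist only of the combinatorial factor, the constants, and the monomial --- it also carries the factor $\exp\left(C_2\,T/N^{L-1-l}\right)$, which your summary of what $E_l$ contributes omits. If you carry this factor through honestly, the first geometric sum acquires the additional term $1/N^{L-1-l}$, so the exponent becomes
\begin{equation*}
C_2 T\,\frac{1-1/N^{L-l}}{1-1/N}
\qquad\text{rather than the stated}\qquad
C_2 T\,\frac{1-1/N^{L-1-l}}{1-1/N}.
\end{equation*}
The paper's proof silently drops the same factor (its second ``$=$'' is where it disappears), so your proposal reproduces the published argument, flaw included. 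The discrepancy is immaterial downstream, since Lemma \ref{lemma:A6} only uses the common upper bound $C_2T/(1-1/N)$, which dominates both expressions; but strictly speaking a fully bookkept substitution yields the slightly larger exponent, so either that factor must be tracked explicitly or the lemma's exponent adjusted to $1-1/N^{L-l}$.
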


\begin{proof}
The bounds \eqref{eq:bound_E_l} and \eqref{eq:bound_A_l} from lemma \ref{lem:representations_for_E_1_-E_2_} shall be used:
\begin{align*}
    E_{l} \prod_{j=  l +1}^{L-1} A_{j} & = 
    {N \choose k_l+1} C_1^{k_l} C_3 (\Delta T_l)^{(k_l+1)(p_c+1)} 
    \exp \left(C_2 \frac{T}{N^{L-1-l}} \right)
    \\ & \qquad
    N^{L-1-l} \exp\Big(\ C_2 T (1+1/N + \dots 1/N^{L-1-l-1}) \\ & \qquad \qquad \qquad  \qquad + C_1 T \Delta T_{L-1}^{p_c} (1+1/N^{p_c+1}+ \dots 
    1/N^{(p_c+1)(L-1-l-1)}) \Big)\ \\
    & = {N \choose k_l+1} C_1^{k_l} C_3
    (\Delta T_l)^{(k_l+1)(p_c+1)} N^{L-1-l} \\
    & \qquad \exp\left(C_2 T \frac{1-1/N^{(L-1-l)}}{1-1/N} + C_1 T \Delta T_{L-1}^{p_c} \frac{1-1/N^{(p_c+1)(L-1-l)}}{1-1/N^{(p_c+1)}} \right) \\
    & \le {N \choose k_l+1} C_1^{k_l} C_3
    (\Delta T_l)^{(k_l p_c + k_l + p_c)} \frac{\Delta T_{L-1}}{N^{L-1-l}} N^{L-1-l}  \\
    & \qquad \exp\left(C_2 T \frac{1-1/N^{(L-1-l)}}{1-1/N} + C_1 T \Delta T_{L-1}^{p_c} \frac{1-1/N^{(p_c+1)(L-1-l)}}{1-1/N^{(p_c+1)}} \right) \\
    & = {N \choose k_l+1} C_1^{k_l} C_3
    (\Delta T_l)^{(k_l p_c + k_l + p_c)} \Delta T_{L-1} \\
    & \qquad \exp\left(C_2 T \frac{1-1/N^{(L-1-l)}}{1-1/N} + C_1 T \Delta T_{L-1}^{p_c} \frac{1-1/N^{(p_c+1)(L-1-l)}}{1-1/N^{(p_c+1)}} \right) .
\end{align*}

\end{proof}

\begin{lemma}
\label{lem:representations_for_E_1_-E_2_}
Using the relations \eqref{eq:E_l__} and \eqref{eq:A_l__} to define $E_{l}$ and $A_{l}$, we can show the following bounds
\begin{align}
    E_{l} &\le {N \choose k_l+1} C_1^{k_l} C_3 (\Delta T_l)^{(k+1)(p_c+1)} 
    \exp \left(C_2 \frac{T}{N^{L-1-l}} \right)
    \label{eq:bound_E_l}
\\
    A_{l} &\le N \exp\left(C_2 \frac{ T}{N^{L-1-l}} + C_1 T \frac{\Delta T_{L-1}^{p_c} }{N^{(L-1-l)(p_c+1)}} \right) , \label{eq:bound_A_l}
\end{align}
where the propagator on level $l$ for $l \ge 1$ has an accuracy order of at least $p_c$.
\end{lemma}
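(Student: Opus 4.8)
The plan is to obtain both bounds by a direct substitution of the definitions \eqref{eq:E_l__} and \eqref{eq:A_l__}, followed by three elementary reductions: replacing powers of $\beta_l$ and of $1+\alpha_{0N,l}$ by exponentials through the inequality $1+x\le e^x$; absorbing the level-dependent constants and accuracy orders into the global quantities $C_1,C_2,C_3$ and $p_c$ via the conventions of \cref{subsec:lemmata_multiple_levels}; and finally rewriting the resulting products $N\,\Delta T_l$ and $N\,\Delta T_l^{p_c+1}$ in terms of $\Delta T_{L-1}$ and $T$ using the coarsening relations \eqref{eq:relation_for_Delta_T_l_1} and \eqref{eq:relation_for_Delta_T_l_2} together with $N\,\Delta T_{L-1}=T$.

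For \eqref{eq:bound_E_l}, I would first substitute $\gamma_l=C_{3,l}\Delta T_l^{p_l+1}$, $\alpha_l=C_{1,l}\Delta T_l^{p_l+1}$ and $\beta_l=1+C_{2,l}\Delta T_l$ into $E_l$, collecting the powers of $\Delta T_l$ into $\Delta T_l^{(p_l+1)(k_l+1)}$. Using $C_{1,l}\le C_1$, $C_{3,l}\le C_3$, and (assuming $\Delta T_l\le 1$) the order convention $p_l\ge p_c$, which gives $\Delta T_l^{(p_l+1)(k_l+1)}\le\Delta T_l^{(p_c+1)(k_l+1)}$, the algebraic prefactor reduces to $\binom{N}{k_l+1}C_1^{k_l}C_3\,\Delta T_l^{(p_c+1)(k_l+1)}$. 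The remaining factor $(1+C_{2,l}\Delta T_l)^{N-k_l-1}$ I would bound by $\exp\!\big(C_2\Delta T_l(N-k_l-1)\big)\le\exp(C_2 N\Delta T_l)$, and the coarsening relations then give $N\Delta T_l=T/N^{L-1-l}$, which is exactly the exponential in \eqref{eq:bound_E_l}.

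The bound \eqref{eq:bound_A_l} follows the same pattern applied to $A_l=N\beta_l^{N-1}(1+\alpha_{0N,l})^{N-1}$. I would bound $\beta_l^{N-1}\le\exp(C_2 N\Delta T_l)=\exp(C_2 T/N^{L-1-l})$ exactly as above, and, since $\alpha_{0N,l}\in[0,\alpha_l]$, bound $(1+\alpha_{0N,l})^{N-1}\le\exp(\alpha_l N)$ with $\alpha_l N\le C_1 N\Delta T_l^{p_c+1}$ after the constant and order reductions. The only genuinely computational step is simplifying $N\Delta T_l^{p_c+1}$: writing $\Delta T_l=N^{l-(L-1)}\Delta T_{L-1}$ and $\Delta T_{L-1}=T/N$ yields $N\Delta T_l^{p_c+1}=T\,\Delta T_{L-1}^{p_c}/N^{(L-1-l)(p_c+1)}$, which produces the second exponential in \eqref{eq:bound_A_l}.

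I do not expect a conceptual obstacle; the work is entirely bookkeeping of the exponents of $N$ in the coarsening relations, and the one place to be careful is that the reduction $p_l\ge p_c$ inside the $\Delta T_l$-exponent is valid only because $\Delta T_l\le 1$, so that raising to a smaller power can only enlarge the quantity. I would also remark that the exponent $(k+1)(p_c+1)$ written in \eqref{eq:bound_E_l} should read $(k_l+1)(p_c+1)$, consistent with its subsequent use in \cref{lemma:Lemma7}; with that reading, both bounds follow directly from the reductions above.
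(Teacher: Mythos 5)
Your proposal is correct and takes essentially the same route as the paper's proof: substitute the definitions of $\alpha_l,\beta_l,\gamma_l$, apply $1+x\le e^x$ to the powers of $\beta_l$ and $(1+\alpha_{0N,l})$, and convert via the coarsening relations together with $N\,\Delta T_{L-1}=T$. Your extra care with the level-dependent constants and orders (noting that the reduction $p_l\ge p_c$ requires $\Delta T_l\le 1$) and your reading of the exponent $(k+1)(p_c+1)$ as $(k_l+1)(p_c+1)$ only make explicit what the paper's notational conventions and the subsequent use of the bound in Lemma~\ref{lemma:Lemma7} leave implicit.
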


\begin{proof}
According to the relations \eqref{eq:E_l__} and \eqref{eq:A_l__} we have
\begin{align*}
     E_l &= {n \choose k_l+1}    \gamma_l \alpha^{k_l}_l  \beta^{N-{k_l}-1}_l \label{eq:E_l_} \\
    A_l &=  N \beta^{N-1}_l  (1+\alpha_{0N,l})^{N-1}.
\end{align*}

In addition, we assume we have a coarsening factor $N$ which relates the levels and is described in \cref{subsec:multi-level_result} and by the relations \eqref{eq:relation_for_Delta_T_l_1} and eq\ref{eq:relation_for_Delta_T_l_2}.
Then the following identities hold
\begin{align*}
    \alpha_l = C_1 \Delta T_l^{p_c+1} & = C_1 \left( \frac{\Delta T_{L-1}}{N^{L-1-l}} \right)^{p_c+1} \\
    \beta_l = 1+C_2 \Delta T_l & = \left( 1+C_2 \frac{\Delta T_{L-1}}{N^{L-1-l}} \right)\\
    \gamma_l = C_3 \Delta T_l^{p_c+1} & = C_3 \left( \frac{\Delta T_{L-1}}{N^{L-1-l}} \right)^{p_c+1}\\
    \delta_0 = c \Delta T_1 \Delta t^{p_0} & = c  
    \left( \frac{\Delta T_{L-1}}{N^{L-1-1}} \right)
    \left(\frac{\Delta T_{L-1}}{N^{L-1}} \right)^{p_0}
    = c \frac{\Delta T_{L-1}^{p_0+1}}{N^{(L-1)(p_0+1)-1}}
\end{align*}

Thus, we get for $E_{l}$
\begin{align*}
    E_{l} & = {N \choose k_l+1} 
    C_3 \left( \frac{\Delta T_{L-1}}{N^{L-1-l}} \right)^{p_c+1}
    \left(C_1 \left( \frac{\Delta T_{L-1}}{N^{L-1-l}} \right)^{p_c+1} \right)^{k_l}
    \left(1+C_2 \frac{\Delta T_{L-1}}{N^{L-1-l}} \right)^{N-k_l-1}
    \\
    & \le {N \choose k_l+1} C_1^{k_l} C_3 (\Delta T_l)^{(k+1)(p_c+1)}
    \exp \left(C_2 \frac{T_n - T_{k_l-1}}{N^{L-1-l}} \right) 
    \\
    & \le {N \choose k_l+1} C_1^{k_l} C_3 (\Delta T_l)^{(k+1)(p_c+1)}
    \exp \left(C_2 \frac{T}{N^{L-1-l}} \right) ,
\end{align*}
where $T_0, T_1, \dots, T_N$ denote the grid points on the coarsest level.

In addition, for $A_{l}$ we find
\begin{align*}
   A_{l} & =  N \beta_l^{N-1} (1+ \alpha_{0N,l})^{N-1} \\
   & \le N \beta_l^{N-1} (1+ \alpha_{l})^{N-1} \\
   & = N \left(1+C_2 \frac{\Delta T_{L-1}}{N^{L-1-l}} \right)^{N-1} 
   \left(1+ C_1 \left( \frac{\Delta T_{L-1}}{N^{L-1-l}} \right)^{p_c+1} \right)^{N-1} \\
   & \le N \exp\left(C_2 (N-1)\frac{\Delta T_{L-1}}{N^{L-1-l}} + C_1 (N-1) \left(\frac{\Delta T_{L-1}}{N^{L-1-l}}\right)^{p_c+1} \right) \\
   & \le N \exp\left(C_2 \frac{ T}{N^{L-1-l}} + C_1 T \left(\frac{\Delta T_{L-1}^{p_c} }{N^{(L-1-l)(p_c+1)}}\right) \right)
\end{align*}
In the last inequality we exploit that $N \Delta T_{L-1} = T$, where $T$ is the length of the integration interval (on the coarsest level).
\end{proof}

\section{Details about $\mathcal{M}_{0,l}$ and $\mathcal{M}_{1,l}$} 
\label{app:sec:convergence_proof_with_averaging}

To simplify the notation, we write $\mathcal{M}_{0}$ instead of $\mathcal{M}_{0,l}$ and $\mathcal{M}_{1}$ instead of $\mathcal{M}_{1,l}$. Additionally, we neglect the level when we refer to the exact and numerical propagators of the averaged system, i.e. we use $\bar E$ instead of $\bar E^l$ and $\bar G$ instead of $\bar G^l$. To emphasize the dependence of a solution of a differential equation on the initial values we also employ the flow notation in this section, consequently  $\varphi_t(u_0)$ and $u(t)$ are used equivalently. 
The subsequent investigations shall justify the assumption of Lipschitz continuity in $u_0$ of

\begin{equation} \label{eq:definitions_of_M0_and_M1}
\begin{split}
\mathcal{M}_0 (u_0, \epsilon, \eta, \Delta T) &= \bar E(u_0) - \bar G(u_0)\qquad \text{ and }  \\
    \mathcal{M}_1 (u_0, \epsilon, \eta) &= E(u_0) - \bar E(u_0). 
\end{split}
\end{equation}

\begin{lemma}
\label{lemma:Lemma_about_M0}
$\mathcal{M}_0$ is Lipschitz continuous in the first argument. (Parts of the proof are needed later in the next Lemma.)
\end{lemma}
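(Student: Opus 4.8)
The plan is to exploit the definition $\mathcal{M}_0(u_0,\epsilon,\eta,\Delta T) = \bar E(u_0) - \bar G(u_0)$ and to establish Lipschitz continuity of the two propagators \emph{separately}; since a difference of Lipschitz maps is again Lipschitz, with constant at most the sum of the two, the claim then follows. For the numerical propagator $\bar G$ there is nothing to do: adopting the appendix convention of dropping the level index, the stability hypothesis \eqref{eq:(8)_Gander_Hairer_modified} already asserts $\|\bar G(x) - \bar G(y)\| \le (1 + C_2 \Delta T)\|x-y\|$, so $\bar G$ is Lipschitz with constant $1 + C_2\Delta T$. The entire difficulty therefore lies in the exact averaged flow $\bar E$, i.e.\ the solution operator of the averaged equation \eqref{eq:averaged_equation} over one coarse step.

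First I would show that the averaged vector field (the right-hand side of \eqref{eq:averaged_equation}), written with its normalized kernel as
\begin{equation*}
F(t,v) = -\,e^{t\mathcal{L}/\epsilon}\,\frac{1}{\eta_l}\int_{-\eta_l/2}^{\eta_l/2}\rho\!\left(\frac{s}{\eta_l}\right) e^{s\mathcal{L}/\epsilon}\,\mathcal{N}\!\left(e^{-s\mathcal{L}/\epsilon} v\right) ds,
\end{equation*}
is Lipschitz in $v$ on any fixed ball, with a constant \emph{independent of $\epsilon$}. Three structural facts drive this. Because $\mathcal{L}$ is skew-Hermitian, every exponential $e^{\pm s\mathcal{L}/\epsilon}$ and $e^{t\mathcal{L}/\epsilon}$ is unitary, hence norm-preserving, so the stiff factor $1/\epsilon$ never appears in the estimate. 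The quadratic nonlinearity $\mathcal{N}$ is locally Lipschitz, so on the ball of radius $R$ it obeys $\|\mathcal{N}(a) - \mathcal{N}(b)\| \le \lambda_R\|a-b\|$. Finally, the averaging weight is normalized, $\frac{1}{\eta_l}\int_{-\eta_l/2}^{\eta_l/2}\rho(s/\eta_l)\,ds = 1$, so averaging does not enlarge the constant. Combining these, $\|\mathcal{N}(e^{-s\mathcal{L}/\epsilon}v_1) - \mathcal{N}(e^{-s\mathcal{L}/\epsilon}v_2)\| \le \lambda_R\|e^{-s\mathcal{L}/\epsilon}(v_1-v_2)\| = \lambda_R\|v_1-v_2\|$, whence $\|F(t,v_1)-F(t,v_2)\| \le \lambda_R\|v_1-v_2\|$ uniformly in $\epsilon$. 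A standard Grönwall argument applied to two solutions of \eqref{eq:averaged_equation} then yields $\|\bar E(u_0^{(1)}) - \bar E(u_0^{(2)})\| \le e^{\lambda_R\Delta T}\|u_0^{(1)} - u_0^{(2)}\|$.

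Combining the two bounds gives, by the triangle inequality,
\begin{align*}
\|\mathcal{M}_0(u_0^{(1)}) - \mathcal{M}_0(u_0^{(2)})\|
&\le \|\bar E(u_0^{(1)}) - \bar E(u_0^{(2)})\| + \|\bar G(u_0^{(1)}) - \bar G(u_0^{(2)})\| \\
&\le \left(e^{\lambda_R\Delta T} + 1 + C_2\Delta T\right)\|u_0^{(1)} - u_0^{(2)}\|,
\end{align*}
which is the asserted Lipschitz continuity. The two reusable ingredients promised by the parenthetical remark are exactly the uniform-in-$\epsilon$ Lipschitz bound on $F$ and the resulting Grönwall estimate for $\bar E$: the next lemma, which compares the unaveraged flow $E$ against $\bar E$ to bound $\mathcal{M}_1 = E - \bar E$, reuses precisely these stability estimates for the averaged flow.

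The main obstacle will be the uniform-in-$\epsilon$ claim in the second step. The naive route of differentiating through $e^{\pm s\mathcal{L}/\epsilon}$ manufactures spurious factors of $1/\epsilon$; the fix is to keep the exponentials intact and invoke only their unitarity, as indicated above. A secondary technicality is that the quadratic $\mathcal{N}$ is merely \emph{locally} Lipschitz, so $\lambda_R$ depends on the ball; I would dispose of this by fixing an a priori ball containing the solution values of both trajectories over $[0,\Delta T]$, which is legitimate given the boundedness $M = \max\|\mathcal{N}\| < \infty$ assumed earlier together with continuity of the flow.
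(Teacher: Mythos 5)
Your argument does prove the literal statement, but only with a Lipschitz constant of order one, and that misses the essential content of the lemma. Splitting $\mathcal{M}_0 = \bar E - \bar G$ by the triangle inequality and bounding the two propagators separately necessarily yields a constant like $e^{\lambda_R \Delta T} + 1 + C_2\Delta T \approx 2$: it throws away exactly the cancellation between the exact averaged flow and its numerical approximation that $\mathcal{M}_0$ encodes. The paper instead applies the truncation-error expansion \eqref{eq:(7)_Gander_Hairer_modified}, i.e.\ $\bar E(x) - \bar G(x) = \bar c_{p+1}(x)\,\Delta T^{p+1} + \dots$, so that
\begin{equation*}
\mathcal{M}_0(v_1) - \mathcal{M}_0(v_2) = \bigl(\bar c_{\,p+1}(v_1) - \bar c_{\,p+1}(v_2)\bigr)\Delta T^{p+1} + \dots \ ,
\end{equation*}
and Lipschitz continuity of the coefficient functions then gives a Lipschitz constant of order $\Delta T^{p+1}$. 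This smallness is not cosmetic: in the proof of \cref{theorem:Multi-level_result_with_averaging} the contraction factor $\bar\alpha_l$ is built from precisely this constant, namely $C\,\Delta T_l^{p_c+1}\max_{\omega_0\le\omega}\{\,|\epsilon/\omega|\,\kappa(\epsilon,\eta_l,\omega)\}$, and the decay of the Parareal error in the iteration count enters through $\bar\alpha_l^{\,k_l}$. With an $O(1)$ Lipschitz constant the recurrence $e_n^{k+1} \le \bar\alpha\, e_{n-1}^{k} + \beta\, e_{n-1}^{k+1} + \delta_0$ produces no gain per iteration, and the convergence theorem that this lemma feeds into collapses. So the step that fails is the very first one: the decomposition into two separately-Lipschitz maps cannot recover the $\Delta T^{p+1}$ factor.

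The second half of your proposal is sound and is, in substance, the second half of the paper's proof: you show via unitarity of $e^{\pm s\mathcal{L}/\epsilon}$ and Gr\"onwall that the flow of the averaged equation is Lipschitz in its initial data with an $\epsilon$-uniform constant, while the paper reaches the same conclusion through the variational equation $\partial_t(\partial_x \varphi_t(x)) = \partial_2 N_{\eta}(t,\varphi_t(x))\,\partial_x\varphi_t(x)$ and the bound $|\partial_2 N_\eta| \le M$ inherited from the unaveraged nonlinearity. That flow-stability estimate is the ingredient ``needed later in the next Lemma.'' The repair is therefore to keep your flow estimate but replace the triangle-inequality step by the expansion \eqref{eq:(7)_Gander_Hairer_modified}: bound the difference of the truncation-error coefficients by $L\,\|v_1(t)-v_2(t)\|$, then compose with the flow estimate $\|v_1(t)-v_2(t)\| \le \tilde L\,\|v_1(t_0)-v_2(t_0)\|$ to obtain $\|\mathcal{M}_0(v_1)-\mathcal{M}_0(v_2)\| \le \Delta T^{p+1} L \tilde L\, \|v_1(t_0)-v_2(t_0)\|$, which is the bound the paper actually uses.
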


\begin{proof}

For the estimate of $\mathcal{M}_0$, we apply equation \eqref{eq:(7)_Gander_Hairer} and 
follow the same arguments as in the proof of  \cref{thm:two-level_bound}. This gives  
\begin{equation*}
\| \mathcal{M}_0 (v_1(t_0), \epsilon, \eta, \Delta T) - \mathcal{M}_0 (v_2(t_0), \epsilon, \eta, \Delta T) \| \le
 \Delta T^{p+1} L \| v_1(t) -v_2(t) \|.
\end{equation*}

Now, we have to show that the expression $\| v_1(t) -v_2(t) \|$ satisfies an estimate of the form
\begin{equation*}
    \| v_1(t) -v_2(t) \| \le \tilde L \| v_1(t_0) -v_2(t_0) \|,
\end{equation*}
where $t_0$ is the initial time.

First, we show that the averaged non-linearity can be bounded:

Let us assume that the unaveraged non-linearity and its derivatives are bounded by a constant $M$. In particular we have
\begin{equation*}
   -M \le \partial_2 N(t, \varphi_t(x)) \le M ,
\end{equation*}
where $\partial_2$ denotes the partial derivative with respect to the second component.
From this we can conclude that
\begin{align*}
   -M & = - \frac{1}{\eta} \int_{-\eta/2}^{\eta/2} \rho \left(\frac{s}{\eta}\right) M \ ds \\
   & \le \frac{1}{\eta} \int_{-\eta/2}^{\eta/2} \rho \left(\frac{s}{\eta}\right) \partial_2 N(s+t, \varphi_{s+t}(x)) \ ds \\
   & = \partial_2 N_{\eta}(t, \varphi_{t}(x)) \\
   & \le \frac{1}{\eta} \int_{-\eta/2}^{\eta/2} \rho \left(\frac{s}{\eta}\right) M \ ds = M .
\end{align*}

In \cite{Wilke_Pruss_2010} on page 92 equation (4.22) we find a result about the dependence of an ordinary differential equation on the initial condition. Especially, the dependence of a differential equation on the initial condition obeys another differential equation
\begin{equation}
\label{eq:dep_on_initial_data}
    \partial_t (\partial_x \varphi_t(x)) = \partial_2 N_{\eta}(t, \varphi_t(x)) \partial_x  \varphi_t(x), 
\end{equation}
where the identity matrix $I$ is the initial condition of equation \eqref{eq:dep_on_initial_data}.

This is a linear equation in $\partial_x \varphi_t(x)$. 
Thus, applying the bound from above we can conclude 
\begin{equation*}
    - e^{Mt} \le \partial_x \varphi_t(x) \le e^{Mt} ,
\end{equation*}
which leads to the relation 
\begin{equation*}
    |\varphi_t(v_0) - \varphi_t(w_0)| \le |v_0 -w_0| e^{Mt} .
\end{equation*}
Only the non-stiff nonlinearity determines how fast two solutions of the modulation equation with distnict initial values diverge! This is also true for the averaged version of the modulation equation!

{\color{white}1}
\end{proof}


We rewrite equation \eqref{eq:modulation_equation}
and \eqref{eq:averaged_problem} as
\begin{align}
\frac{dw}{d\tilde t} &= N \left( \frac{\tilde t}{\epsilon},w(\tilde t) \right) & \tilde t \in [0,1] \\
\frac{d \bar w}{d\tilde t} &= N_{\eta} \left( \frac{\tilde t}{\epsilon},\bar w(\tilde t) \right) & \tilde t \in [0,1].
\end{align}
Applying the time transformation $t (\tilde t) =   \tilde t /\epsilon$ leads to the new problems
\begin{align}
\frac{dw_{s}}{d t} &= \epsilon N \left( t,w_{s}(t) \right) & t \in [0,1/\epsilon] \label{eq_tranformed1}\\
\frac{d \bar w_{s}}{d t} &= \epsilon N_{\eta} \left(  t,\bar w_{s}( t) \right) & t \in [0,1/\epsilon]. \label{eq_tranformed2}
\end{align}
From now on, we will work with equations \cref{eq_tranformed1} and \cref{eq_tranformed2} and skip the index $s$ (for slow).
Considering the equations \cref{eq_tranformed1} and \cref{eq_tranformed2} we see that the derivatives of $w$ and $\bar w$ are bounded by $\epsilon M$. In the following, we will use the equivalent notations $w(t) = \varphi_t(w_0)$ and $\bar w(t) = \bar  \varphi_t(w_0)$ .  Moreover, the subsequent expression is valid
\begin{align*}
    \mathcal{M}_1(v_0, \epsilon, \eta) - \mathcal{M}_1(w_0, \epsilon, \eta) 
    & = v(t) - \bar v(t) -[w(t) - \bar w(t)] \\
    & = \epsilon \int_0^t N(\tau, \varphi_{\tau}(v_0)) d \tau - \epsilon \int_0^t N_{\eta} (\tau, \bar \varphi_{\tau}(v_0)) d \tau \\
    & \quad - \left( \epsilon \int_0^t N(\tau, \varphi_{\tau}(w_0)) d \tau - \epsilon \int_0^t N_{\eta} (\tau, \bar \varphi_{\tau}(w_0)) d \tau \right) .
\end{align*}
We will now establish the Lipschitz continuity of $\mathcal{M}_1(u_0, \epsilon, \eta)$ in the first component.

\begin{lemma}
\label{lemma:Lemma_about_M1}
$\mathcal{M}_1$ can be bounded by a Lipschitz constant of the order $\epsilon \eta$ in $u_0$.
\end{lemma}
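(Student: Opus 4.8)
The plan is to start from the integral identity already displayed for $\mathcal{M}_1(v_0,\epsilon,\eta) - \mathcal{M}_1(w_0,\epsilon,\eta)$ and to regroup the four integrals by pairing each full-equation contribution with its averaged counterpart, rather than pairing the two initial data. Writing $Q(u_0,\tau) = N(\tau,\varphi_\tau(u_0)) - N_\eta(\tau,\bar\varphi_\tau(u_0))$, the difference becomes $\epsilon \int_0^t [Q(v_0,\tau)-Q(w_0,\tau)]\, d\tau$, so the whole estimate reduces to controlling how the averaging defect \emph{along a trajectory} depends on the initial datum. Because we work in the slow time variable on $[0,1/\epsilon]$, the prefactor $\epsilon\int_0^t d\tau$ is $O(1)$, and the target bound $C\epsilon\eta\|v_0-w_0\|$ must come entirely from a pointwise gain of order $\epsilon\eta$ in the integrand together with a Lipschitz factor $\|v_0-w_0\|$. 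In other words, the goal is a representation of $\mathcal{M}_1$ as $\epsilon\eta$ times a function that is Lipschitz in $u_0$ \emph{uniformly} in $\epsilon$ and $\eta$.

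Next I would split $Q(u_0,\tau)$ into the oscillatory defect $N(\tau,\cdot)-N_\eta(\tau,\cdot)$ evaluated at $\varphi_\tau(u_0)$, plus the state-difference $N_\eta(\tau,\varphi_\tau(u_0)) - N_\eta(\tau,\bar\varphi_\tau(u_0))$. For the first, oscillatory, piece the gain of order $\epsilon\eta$ is produced by integration by parts in $\tau$, exactly as in the magnitude estimate \eqref{eq_M_1l_est} and in \cite{Peddle_Haut_Wingate_2019}, exploiting the moment and compact-support properties of the kernel $\rho$. Applying this averaging argument to the difference $N(\tau,\varphi_\tau(v_0)) - N(\tau,\varphi_\tau(w_0))$ and using the flow Lipschitz bound $|\varphi_\tau(v_0)-\varphi_\tau(w_0)| \le e^{\epsilon M\tau}|v_0-w_0| \le e^{M}|v_0-w_0|$ from \cref{lemma:Lemma_about_M0} (uniform on $[0,1/\epsilon]$ because the generator carries the factor $\epsilon$) yields a contribution bounded by $C\epsilon\eta\|v_0-w_0\|$. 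For the second piece I would use that $N_\eta$ has state-derivative bounded by $M$, as established inside the proof of \cref{lemma:Lemma_about_M0}, together with the magnitude bound $\|\varphi_\tau(u_0)-\bar\varphi_\tau(u_0)\| \le C\epsilon\eta$, so that each bracket is itself of size $O(\epsilon\eta)$.

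The main obstacle is this second piece, since we need its \emph{dependence on} $u_0$ to be of size $\epsilon\eta$, not merely its value: what is required is that the difference of the variational (derivative-in-initial-data) flows $\partial_{u_0}\varphi_\tau - \partial_{u_0}\bar\varphi_\tau$ stays of order $\epsilon\eta$ uniformly on $[0,1/\epsilon]$. I would obtain this by writing the variational system $\dot\Phi_\tau = \epsilon\,\partial_2 N(\tau,\varphi_\tau)\Phi_\tau$ and its averaged analogue, subtracting, and applying variation of constants; the source term then involves $\partial_2 N - \partial_2 N_\eta$, to which the same integration-by-parts averaging estimate applies once the nonlinearity and its first two state-derivatives are assumed bounded. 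A Gronwall bootstrap gives $\|\partial_{u_0}\varphi_\tau - \partial_{u_0}\bar\varphi_\tau\| \le C\epsilon\eta$, the $O(1)$ length of the slow-time interval preventing any amplification of the small defect. Combining the two pieces and integrating over $[0,t]$ then gives $\|\mathcal{M}_1(v_0,\epsilon,\eta) - \mathcal{M}_1(w_0,\epsilon,\eta)\| \le C\epsilon\eta\|v_0-w_0\|$, which is the claimed Lipschitz constant of order $\epsilon\eta$.
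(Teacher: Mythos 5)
Your proposal is correct in substance but proves the lemma by a genuinely different mechanism than the paper. The paper never differentiates with respect to the initial data: it stays at the level of the trajectories, writes $\varphi_t(v_0)-\bar\varphi_t(v_0)$ through the averaging kernel, Taylor-expands once in the kernel variable $s$ (so that the time-shift term is controlled simply by $|s|\le\eta/2$ together with the Lipschitz bound on $\partial_1 N$, with no oscillatory cancellation needed) and once in the state variable to second order, inserts the magnitude bound $\|\varphi_\tau(u_0)-\bar\varphi_\tau(u_0)\|=O(\epsilon\eta)$ from \eqref{eq_M_1l_est} (Corollary 4.1 of \cite{Peddle_Haut_Wingate_2019}), and then closes a Gronwall inequality for the coupled quantity $D(\tau)=[\varphi_\tau(v_0)-\bar\varphi_\tau(v_0)]-[\varphi_\tau(w_0)-\bar\varphi_\tau(w_0)]$ itself. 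You instead split the defect into the oscillatory defect of $N$ along the full trajectory plus the state difference through $N_{\eta}$, gain the factor $\epsilon\eta$ in the first piece by Engquist--Tsai-type integration by parts in $\tau$ applied to \emph{differences} of trajectories, and reduce the second piece to the variational-flow estimate $\|\partial_{u_0}\varphi_\tau-\partial_{u_0}\bar\varphi_\tau\|\le C\epsilon\eta$, proved by Gronwall on the difference of the variational equations. Both routes rest on the same pillars (boundedness of $N$ and its first two derivatives, the $O(\epsilon\eta)$ magnitude bound, Gronwall on the slow interval $[0,1/\epsilon]$), so they are cousins, but the differences are real. The paper's route needs no new oscillatory estimates at all --- the only averaging input is the already-available magnitude bound, everything else being elementary Taylor expansion --- at the cost of somewhat intricate bookkeeping for $D(\tau)$. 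Your route produces a stronger, reusable intermediate statement (uniform $O(\epsilon\eta)$ closeness of the variational flows, from which the Lipschitz claim follows by the mean value inequality), and its $\epsilon$--$\eta$ bookkeeping is arguably more transparent; the cost is that the oscillatory-cancellation estimates must be re-derived for difference quantities and for the defect of $\partial_2 N$ (requiring bounded $\partial_2^2 N$ and $\partial_1\partial_2 N$, assumptions you correctly flag), and your sketch invokes these estimates without carrying them out --- that is where the remaining work lies in a full write-up.
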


\begin{proof}

For the estimate of $\mathcal{M}_1$ we have
\begin{align*}
    \varphi_t(v_0) - \bar \varphi_t(v_0)
    &= \epsilon \int_0^t N(\tau, \varphi_{\tau}(v_0)) d \tau - \epsilon \int_0^t N_{\eta} (\tau, \bar \varphi_{\tau}(v_0)) d \tau \\
    &= \epsilon \int_0^t \frac{1}{\eta} \int_{-\eta/2}^{\eta/2} \rho \left( \frac{s}{\eta} \right) \left( N(\tau, \varphi_{\tau}(v_0)) - N(\tau+s, \bar \varphi_{\tau}(v_0)) \right) \ d s \ d \tau \\
    &= \dots
\end{align*}

A zeroth order Taylor representation in $s$ is applied for $N(\tau+s, v(\tau))$ :
\begin{align*}
    N(\tau+s, \bar \varphi_{\tau}(v_0)) 
    &= N(\tau, \bar \varphi_{\tau}(v_0)) +
    \int_{0}^s \partial_1 N(\tau+r, \bar \varphi_{\tau}(v_0)) d r .
\end{align*}
Note: $s \in [-\eta/2, \eta/2]$.

\begin{align*}
    \dots &= \epsilon \int_0^t \frac{1}{\eta} \int_{-\eta/2}^{\eta/2} \rho \left( \frac{s}{\eta} \right) \left( N(\tau, \varphi_{\tau}(v_0)) - \left[N(\tau, \bar \varphi_{\tau}(v_0)) +
    \int_{0}^s \partial_1 N(\tau+r, \bar \varphi_{\tau}(v_0)) \ d r \right] \right) \ d s \ d \tau \\
    &=\epsilon \int_0^t \frac{1}{\eta} \int_{-\eta/2}^{\eta/2} \rho \left( \frac{s}{\eta} \right) \bigg(  N(\tau, \varphi_{\tau}(v_0)) - N(\tau, \bar \varphi_{\tau}(v_0)) \bigg) \ d s \ d \tau \dots \text{ (term 1)} \\
    & \qquad - \epsilon \int_0^t \frac{1}{\eta} \int_{-\eta/2}^{\eta/2} \rho \left( \frac{s}{\eta} \right) 
    \int_{0}^s \partial_1 N(\tau+r,\bar \varphi_{\tau}(v_0)) \ d r  \ d s \ d \tau \text{ (term 2)}\\
\end{align*}

We split the above relation into term 1 and term 2 and continue the computations separately. 
For the contribution of term 2 we find the estimate 

\begin{align*}
    &\|\varphi_t(v_0) - \bar \varphi_t(v_0) - (\varphi_t(w_0) - \bar \varphi_t(w_0))\| \Big\vert_{\text{term 2}}  \\ 
    & \qquad \le \epsilon \int_0^t \frac{1}{\eta} \int_{-\eta/2}^{\eta/2} \rho \left( \frac{s}{\eta} \right) 
    \int_{0}^s \| \partial_1 N(\tau+r,\bar \varphi_{\tau}(v_0)) -  \partial_1 N(\tau+r,\bar \varphi_{\tau}(w_0)) \| \ d r  \ d s \ d \tau \\
    & \qquad \le \epsilon \int_0^t \frac{1}{\eta} \int_{-\eta/2}^{\eta/2} \rho \left( \frac{s}{\eta} \right) 
    \int_{0}^s M \| \bar \varphi_{\tau}(v_0) -\bar \varphi_{\tau}(w_0) \| \ d r  \ d s \ d \tau \\
    & \qquad \le \epsilon \int_0^t \frac{1}{\eta} \int_{-\eta/2}^{\eta/2} \rho \left( \frac{s}{\eta} \right) 
    M s e^{\epsilon M \tau} \| v_0 - w_0 \| \ d s \ d \tau \\
    & \qquad \le  \epsilon \int_0^t M e^{\epsilon M \tau}\eta \ d \tau \| v_0 - w_0 \|\\
    & \qquad \le \eta \frac{M \epsilon}{M \epsilon}  (e^{\epsilon M t}-1)   \| v_0 - w_0 \| \\
  & \qquad \le M \eta \epsilon e^{\epsilon M t}  \| v_0 - w_0 \| 
\end{align*}
In the last line, the mean value theorem is applied.
This bound is of order $\epsilon \eta$ and linear in $ \| v_0 - w_0 \|$. Note: $t \in [0, 1/\epsilon]$

For term 1 we find
\begin{equation}
\label{eq:M1_in_estimate}
    \begin{split}
    &\|\varphi_t(v_0) - \bar \varphi_t(v_0) - (\varphi_t(w_0) - \bar \varphi_t(w_0))\| \Big\vert_{\text{term 1}}  \\ 
    & \qquad =
    \epsilon \left\|\int_0^t \underbrace{ \frac{1}{\eta} \int_0^{\eta} \rho \left( \frac{s}{\eta} \right)}_{=1} \bigg( N(\tau, \varphi_{\tau}(v_0)) - N(\tau, \bar \varphi_{\tau}(v_0)) - \left[N(\tau, \varphi_{\tau}(w_0)) - N(\tau, \bar \varphi_{\tau}(w_0))\right] \bigg) \ d s \ d \tau \right\| \\
     & \qquad =
    \epsilon \left\|\int_0^t   N(\tau, \varphi_{\tau}(v_0)) - N(\tau, \bar \varphi_{\tau}(v_0)) - \left[N(\tau, \varphi_{\tau}(w_0)) - N(\tau, \bar \varphi_{\tau}(w_0))\right]  \ d \tau \right\| 
        \end{split}
\end{equation}


To estimate the contribution of term 1, we consider 
\begin{align*}
    N(\tau, \varphi_{\tau}(v_0)) - N(\tau, \bar \varphi_{\tau}(v_0)) .
\end{align*}

Again, a Taylor expansion for this expression can be applied
\begin{align*}
    N(\tau, \varphi_{\tau}(v_0)) = & N(\tau, \bar \varphi_{\tau}(v_0)) +  (\varphi_{\tau}(v_0) - \bar \varphi_{\tau}(v_0)) \partial_2 N(\tau, \bar \varphi_{\tau}(v_0)) + \\ & \quad\int_{\bar \varphi_{\tau}(v_0)}^{ \varphi_{\tau}(v_0)}
    ( \varphi_{\tau}(v_0)-\sigma) \partial_2^2 N(\tau, \sigma) d \sigma ,
\end{align*}
where $\partial_2$ denotes the partial derivative with respect to the second component.
Thus,
\begin{align*}
    &N(\tau, \varphi_{\tau}(v_0)) - N(\tau, \bar \varphi_{\tau}(v_0)) = \\
    & \qquad (\varphi_{\tau}(v_0) - \bar \varphi_{\tau}(v_0)) \partial_2 N(\tau, \bar \varphi_{\tau}(v_0)) + \int_{\bar \varphi_{\tau}(v_0)}^{ \varphi_{\tau}(v_0)}
    ( \varphi_{\tau}(v_0)-\sigma) \partial_2^2 N(\tau, \sigma) d \sigma
\end{align*}

and
\begin{align*}
    &N(\tau, \varphi_{\tau}(v_0)) - N(\tau, \bar \varphi_{\tau}(v_0)) - \left[N(\tau, \varphi_{\tau}(w_0)) - N(\tau, \bar \varphi_{\tau}(w_0)) \right] = \\
    & \qquad (\varphi_{\tau}(v_0) - \bar \varphi_{\tau}(v_0)) \partial_2 N(\tau, \bar \varphi_{\tau}(v_0)) + \int_{\bar \varphi_{\tau}(v_0)}^{ \varphi_{\tau}(v_0)}
    ( \varphi_{\tau}(v_0)-\sigma) \partial_2^2 N(\tau, \sigma) d \sigma - \dots \\
    &\qquad \left[ (\varphi_{\tau}(w_0) - \bar \varphi_{\tau}(w_0)) \partial_2 N(\tau, \bar \varphi_{\tau}(w_0)) + \int_{\bar \varphi_{\tau}(w_0)}^{\varphi_{\tau}(w_0)}
    ( \varphi_{\tau}(w_0)-\sigma) \partial_2^2 N(\tau, \sigma) d \sigma \right] =\\
    &\qquad \iota_1 (\varphi_{\tau}(v_0) - \bar \varphi_{\tau}(v_0)) - \left[\iota_2(\varphi_{\tau}(w_0) - \bar \varphi_{\tau}(w_0)) \right]
    + \dots \\ & \qquad \int_{\bar \varphi_{\tau}(v_0)}^{ \varphi_{\tau}(v_0)}
    ( \varphi_{\tau}(v_0)-\sigma) \partial_2^2 N(\tau, \sigma) d \sigma -\int_{\bar \varphi_{\tau}(w_0)}^{\varphi_{\tau}(w_0)}
    ( \varphi_{\tau}(w_0)-\sigma) \partial_2^2 N(\tau, \sigma) d \sigma ,
\end{align*}
where we introduced the notation
\begin{equation*}
    \iota_1(a) = a \partial_2 N(\tau, \bar \varphi_{\tau}(v_0)) \qquad \qquad \iota_2(a) = a \partial_2 N(\tau, \bar \varphi_{\tau}(w_0))
\end{equation*}

An estimate for the integrals in the above expression is given by
\begin{align*}
 &\int_{\bar \varphi_{\tau}(v_0)}^{\varphi_{\tau}(v_0)}
    \underbrace{( \varphi_{\tau}(v_0)-\sigma)}_{O(\epsilon \eta)} \partial_2^2 N(\tau, \sigma) d \sigma 
    -  \int_{\bar \varphi_{\tau}(w_0)}^{\varphi_{\tau}(w_0)}
    \underbrace{( \varphi_{\tau}(w_0)-\sigma)}_{O(\epsilon \eta)} \partial_2^2 N(\tau, \sigma) d
    \sigma \\
    &\qquad \le M \epsilon \eta \left( \int_{\bar  \varphi_{\tau}(v_0)}^{\varphi_{\tau}(v_0)} d s - \int_{\bar \varphi_{\tau}(w_0)}^{\varphi_{\tau}(w_0)} d s \right) \\
    &\qquad \le M \epsilon \eta \left[(\varphi_{\tau}(v_0) - \varphi_{\tau}(w_0)) - (\bar \varphi_{\tau}(v_0) - \bar \varphi_{\tau}(w_0)) \right] .
\end{align*}

For the other terms we have
\begin{align*}
    & \iota_1 [\varphi_{\tau}(v_0)- \bar \varphi_{\tau}(v_0)] - \iota_2 [\varphi_{\tau}(w_0) - \bar \varphi_{\tau}(w_0)]   \\
    & \qquad = \iota_1 [\varphi_{\tau}(v_0) - \bar \varphi_{\tau}(v_0)] - \iota_2 [\varphi_{\tau}(v_0) - \bar \varphi_{\tau}(v_0)] + \iota_2 [\varphi_{\tau}(v_0) - \bar \varphi_{\tau}(v_0)] - \iota_2 [\varphi_{\tau}(w_0) - \bar \varphi_{\tau}(w_0)] \\
    & \qquad = \underbrace{(\iota_1 - \iota_2)}_{ \le M e^{\epsilon M \tau} |v_0 -w_0|} \underbrace{[\varphi_{\tau}(v_0) - \bar \varphi_{\tau}(v_0)]}_{\le \tilde m \epsilon \eta = O(\epsilon \eta)} + 
    \underbrace{\iota_2 \left\{ [\varphi_{\tau}(v_0) - \bar \varphi_{\tau}(v_0)] - [\varphi_{\tau}(w_0) - \bar \varphi_{\tau}(w_0)] \right\}}_{\le M ([\varphi_{\tau}(v_0) - \bar \varphi_{\tau}(v_0)] - [\varphi_{\tau}(w_0) - \bar \varphi_{\tau}(w_0)])} .
\end{align*}
For the $O(\eta \epsilon)$ estimate, we refer to Corollary 4.1 in \cite{Peddle_Haut_Wingate_2019} or equation \cref{eq_M_1l_est}.

In this place, the derived estimates can be used to apply the integral version of Gronwall's lemma together with boundedness of the derivatives of the nonlinearity $N$. In particular, Gronwall's inequality can be applied to

\begin{align*}
    & \varphi_t(v_0) - \bar \varphi_t(v_0) - [\varphi_t(w_0) - \bar \varphi_t(w_0)] \\
    & \qquad \le  M \epsilon \eta  e^{\epsilon M t} \| v_0 - w_0 \| \\
    &\quad \qquad + \epsilon \int_0^t M \epsilon \eta \left[(\varphi_{\tau}(v_0) - \varphi_{\tau}(w_0)) - (\bar \varphi_{\tau}(v_0) - \bar \varphi_{\tau}(w_0)) \right]  \ d \tau \\ 
    &\quad \qquad + \epsilon \int_0^t  M ([\varphi_{\tau}(v_0) - \bar \varphi_{\tau}(v_0)] - [\varphi_{\tau}(w_0) - \bar \varphi_{\tau}(w_0)])  \ d \tau \\
    &\quad \qquad + \epsilon \int_0^t \tilde  M e^{\epsilon M \tau} \epsilon \eta |v_0 -w_0|  \ d \tau \\
    & \qquad \le \underbrace{\left(M + \epsilon \tilde M \right) e^{\epsilon M t} \epsilon \eta  \| v_0 - w_0 \|}_{\alpha_G} \\
    & \quad\qquad + \epsilon \int_0^t M (1+ \epsilon \eta) \left[(\varphi_{\tau}(v_0) - \varphi_{\tau}(w_0)) - (\bar \varphi_{\tau}(v_0) - \bar \varphi_{\tau}(w_0)) \right]  \ d \tau ,
\end{align*}
where $t \in [0, 1/\epsilon]$. Again, the mean value theorem is used for the last inequality. As $\alpha_G$ depends linearly on $ \| v_0 - w_0 \|$, we obtain Lipschitz continuity in the initial data. Additionally $\alpha_G$ contains the factor $\epsilon \eta$ which guarantees that the bound of $\mathcal{M}_1$ has the same factor.

{\color{white} 1}
\end{proof}

\section*{Acknowledgments}
The authors would like to thank Rupert Klein for reading and discussing the manuscript.
Juliane Rosemeier is funded by German Research Foundation (DFG) through Walter Benjamin Programme, project {\it Formulation and numerical computation of the low frequency mean flow of fluids}.

\medskip

\end{document}